\newtheorem{theorem}{Theorem}[section]
\newtheorem{defn}[theorem]{Definition}
\newtheorem{lemma}[theorem]{Lemma}
\newtheorem{eple}[theorem]{Example}
\newtheorem{rmk}[theorem]{Remarks}
\newtheorem{dsc}[theorem]{Discussion}
\newtheorem{nota}[theorem]{Notation}
\newsavebox{\indbin}
\savebox{\indbin}{\begin{picture}(0,0)
\newlength{\gnu}
\settowidth{\gnu}{$\smile$} \setlength{\unitlength}{.5\gnu}
\put(-1,-.65){$\smile$} \put(-.25,.1){$|$}
\end{picture}}
\newcommand{\be}{\begin{enumerate}}
\newcommand{\bd}{\begin{defn}}
\newcommand{\bt}{\begin{theorem}}
\newcommand{\bl}{\begin{lemma}}
\newcommand{\ee}{\end{enumerate}}
\newcommand{\ed}{\end{defn}}
\newcommand{\et}{\end{theorem}}
\newcommand{\el}{\end{lemma}}
\begin{document}
\title{Some Arguments for the Wave Equation in Quantum Theory}
\author{Tristram de Piro}
\address{Flat 3, Redesdale House, 85 The Park, Cheltenham, GL50 2RP}
 \email{t.depiro@curvalinea.net}
\thanks{}
\begin{abstract}
We clarify some arguments concerning Jefimenko's equations, as a way of constructing solutions to Maxwell's equations, for charge and current satisfying the continuity equation. We then isolate a condition on non-radiation in all inertial frames, which is intuitively reasonable for the stability of an atomic system, and prove that the condition is equivalent to the charge and current satisfying certain relations, including the wave equations. Finally, we prove that with these relations, the energy in the electromagnetic field is quantised and displays the properties of the Balmer series.
\end{abstract}
\maketitle

This paper is divided into three parts. The first part deals with some technical issues concerning Jefimenko's equations, which are perhaps not completely clear from \cite{G}. Namely that we can obtain solutions to Maxwell's equations from a given charge and current configuration $(\rho,\overline{J})$, satisfying the continuity equation, with $\overline{J}$ vanishing at infinity, using Jefimenko's equation to define the electric and magnetic fields $\{\overline{E},\overline{B}\}$. In particular, this is the case for localised charge and current configurations, or when the charge and current decays rapidly at infinity, a condition true for the Schwartz class of functions which I have denoted by $S(\overline{R}^{3})$ and I have referred to as smoothly decaying. This result is applied in Lemma \ref{waveequation}, where we construct $(\rho,\overline{J},\overline{E},\overline{B})$ satisfying Maxwell's equations and various other relations, just from the assumptions that $\rho$ satisfies the wave equation, has a restriction on the initial condition, and belongs to the Schwartz class.\\
\indent The second part of the paper is mainly concerned with deriving these relations, and proving a converse in the context of special relativity, that these relations are characterised by a no radiation condition in all inertial frames. In Lemma \ref{solution}, we characterise the smoothly decaying solutions of the wave equation $\square^{2}(\overline{E})=\overline{0}$ for electric fields, as well as the smoothly decaying electromagnetic solutions $(\overline{E},\overline{B})$ to Maxwell's equations in free space. This requires a careful Fourier analysis, and, in particular, we require the smoothly decaying hypothesis to apply the inversion theorem. We conclude the Lemma by proving that if $\square^{2}(\overline{E})=\overline{0}$, then we can find a free space solution $(\overline{E}_{0},\overline{B}_{0})$, such that $\bigtriangledown\times(\overline{E}-\overline{E}_{0})=\overline{0}$. This result is required in Lemma \ref{waveequation}.\\
\indent In Lemma \ref{electricmagnetic}, we characterise $\square^{2}(\overline{E})$ and $\square^{2}(\overline{B})$ in terms of the quantities $(\rho,\overline{J})$. Combining these last two Lemmas, we obtain the result of Lemma \ref{waveequation} mentioned above, that we can obtain a number of conditions on $(\rho,\overline{J},\overline{E},\overline{B})$ from essentially the assumption that $\rho$ satisfies the wave equation. In particularly, we can arrange for the magnetic field $\overline{B}$ and Poynting vector $\overline{E}\times\overline{B}$ to be both zero. In Lemma \ref{movingframes}, we strengthen this result, to show that if we can obtain the relations of Lemma \ref{waveequation} for $(\rho,\overline{J},\overline{E},\overline{B})$ in the rest frame $S$, then for \emph{any} inertial frame $S'$, we can extend the transformed charge and current $(\rho',\overline{J}')$ to $(\rho',\overline{J}',\overline{E}',\overline{B}')$, satisfying the same relations. This requires the transformation rules for quantities in $S$, and we rely on the fact that the transformed quantities are bounded, which follows from the smoothly decaying hypothesis, to apply Liouville's theorem.\\
\indent These last two Lemmas are the basis for the non-radiating condition, which I formulate in Definition \ref{nonradiating}, that in \emph{any} inertial frame $S'$ we can arrange for a solution $(\rho',\overline{J}',\overline{E}',\overline{B}')$, extending the transformed quantities $(\rho',\overline{J}')$, with $\overline{B}'=\overline{0}$. This certainly ensures that the transformed system doesn't radiate, according to the definition in \cite{G}, as the Poynting vector $\overline{E}'\times \overline{B}'$ is zero. Interestingly, in Lemma \ref{wavenonradiating}, a converse is proved, namely that for any system satisfying the non-radiating condition, in the rest frame $S$, $\rho$ and $\overline{J}$ satisfy the wave equation, and we can obtain all the relations proved in Lemma \ref{waveequation}. This suggests that the no-radiation condition might be enough to consider using wave equations for $(\rho,\overline{J})$ in atomic systems, which should not radiate according to Rutherford's observation.\\
\indent However, there are weaker definitions of non-radiation, which we give in Definition \ref{nonradiating2}. Some of these we are able to exclude, while supporting our hypothesis, which we do in Lemmas \ref{nononradiating2} and \ref{nononradiating3}, and, others exclusions we leave as conjectures. Certainly, a successful proof of these conjectures, which we believe might be possible with a careful analysis of Jefimenko's equations, would be a compelling argument for the use of the wave equation in quantum theory, and we should make some comparisons. Although Schrodinger's equation has a relativistic formulation in quantum field theory, through the use of the Klein-Gordon equation or the Dirac equation, it still lacks the relativistic invariance of the relations which we were able to derive, and, indeed, uses the possibility of radiating atomic systems to account for the Lamb shift in the hydrogen spectrum. We believe this shift might be accounted for in the discussion of the final part of the paper, and still believe that radiative behaviour would be a strange property. For example, the radiated energy would have to oscillate, if the system is to remain stable, and we do not observe this behaviour in the Lamb shift. A successful resolution of these issues is clearly required and we believe that we have made a step in the right direction.\\
\indent In the last part of this paper, we deal with a common criticism of classical electromagnetism, in it's failure to explain quantised phenomena, in particular the behaviour of the Balmer series or the results of the Franck-Hertz experiment. In Lemma \ref{waveequationlinks}, we start with the relations of Lemma \ref{waveequation}, and characterise these relations in terms of the coefficients governing the Fourier expansions of $\rho$ and $\overline{J}$, a coefficient relation which I later refer to as a radial transform condition. In Lemma \ref{eigenvalues}, I show that these relations can be obtained while also imposing the condition that $\overline{J}|_{S(r_{0})}=\overline{0}$, a natural requirement for an atomic system. This introduces a discreteness phenomena in terms of the zeroes of Bessel functions vanishing at $r_{0}$ and, in Lemma \ref{sphereorthogonal}, I prove a technical result on how these functions, combined with the spherical harmonics, provide an orthogonal basis for smooth functions on the ball $B(r_{0})$, vanishing at the boundary.\\
\indent In Lemma \ref{balmer}, we compute the energy stored in the electromagnetic field, when restricted to $B(r_{0})$, and find, that when the total charge confined to the ball is non-zero, we are able to remove the one continuous degree of freedom, to obtain quantisation of the energy values, in line with the Balmer series, see \cite{NR}. The ionisation condition, on the total charge $Q$ confined to the ball, seems quite interesting, and is observed in the Frank-Hertz experiment. Again, some comparative remarks should be made, namely the success of Schrodinger's equation, together with Bohr's atomic model, in accounting for the Balmer series. We were not able to verify the value of the Rydberg constant in this paper, due to computational issues, but we emphasise that we were able to get the ${1\over n^{2}}$ dependence, where $n$ is an integer. Possible, further computations of Bessel functions and spherical harmonics can be done to resolve this issue, and, again, we believe that we have gone some way to answering this potential criticism of the use of the wave equation. I believe there is some scope for developing the classical theory here, without relying on Planck's photonic theory of radiation, in how the system switches between electromagnetic energy levels, and in how light of certain energies, which we propose would come from the differences in these energy levels, is emitted.\\
\indent We make some observations which are not part of this paper, but instead direct the reader to further work. A principle justification for using the Schrodinger equation is its reliance on wave-particle duality. This requires that an electron can simultaneously be both a particle and a guiding wave, modelled on the square of the wave function $|\Psi|^{2}$. The particle nature of the electron can be observed in experiments such as Millikan's deflection or Compton scattering, while the wave nature, can be seen in the interference patterns of electron diffraction, see \cite{NR}. However, it seems that a dual nature which explains these experiments is a failure to find a better explanation, with the cost of developing a rather paradoxical theory. Instead, a wave function, modelling the behaviour of charge and current, and linking directly with classical electromagnetism, has been proposed here.\\
\indent The author believes that, using nonstandard analysis, one can develop the idea of the wave being composed of individual infinitesimal entities, which behave as particles, and propagate independently and essentially randomly throughout the wave. Some successes for this viewpoint can be found in \cite{heat}, where the heat equation solution is modelled as a random diffusion, and \cite{wave}, where the nonstandard wave equation is shown to perfectly approximate the wave equation at standard values. Instead of considering individual velocities and momenta, one can instead compute the \emph{distribution} of these quantities, and some unfinished progress in this direction has been made for the diffusion equation in \cite{heat2}. A similar idea is present in Boltzmann's derivation of the distribution of molecular speeds for ideal gases. In the case of the wave equation satisfying additional conditions which we have examined, we expect a sharp peak in the distribution function, as would be expected from a single particle. Similar consideration might apply to photons.\\
\indent We finally mention the theory of black body radiation, developed by Planck, and explained concisely in \cite{DV}. A criticism of classical electromagnetism has been its failure to explain the spectrum of black body radiation, by postulating that the radiated energy is independent of frequency, often referred to as the ultraviolet catastrophe. A central concept used in the experimentally successful resolution of this problem, was Planck's use of the quantisation of particular energies of molecular systems. The author hopes that the present paper goes some way to assuage these criticisms, without abandoning the main component of Planck's argument in quantum statistical mechanics.\\

\begin{section}{Jefimenko's Equations}

\indent We begin this paper, then, by first clarifying a technical issue surrounding Jefimenko's equations.

\begin{lemma}
\label{lorentz}

Suppose that $\{\overline{E},\overline{B}\}$ satisfy Maxwell equations, for given charge and current $\{\rho,\overline{J}\}$, and the potentials $\{\overline{A},V\}$, with;\\

$\overline{E}=-\bigtriangledown(V)-{\partial\overline{A}\over \partial t}$\\

$\overline{B}=\bigtriangledown\times\overline{A}$\\

are chosen to satisfy the Lorentz gauge condition;\\

$\bigtriangledown\centerdot\overline{A}+\mu_{0}\epsilon_{0}{\partial V\over \partial t}=0$\\

Suppose that $\{\overline{A}',V'\}$ also satisfy the Lorentz gauge condition, with the additional equations;\\

$\bigtriangledown^{2}(V')-\mu_{0}\epsilon_{0}{\partial^{2} V'\over \partial t^{2}}=-{\rho\over\epsilon_{0}}$ $(i)$\\

$\bigtriangledown^{2}(\overline{A}')-\mu_{0}\epsilon_{0}{\partial^{2} \overline{A}'\over \partial t^{2}}=-\mu_{0}\overline{J}$ $(ii)$\\

Then the corresponding fields $\{\overline{E}',\overline{B}'\}$ still satisfy Maxwell's equations for the same charge and current $\{\rho,\overline{J}\}$.

\end{lemma}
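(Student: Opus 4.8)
The plan is to verify directly that all four of Maxwell's equations hold for the fields $\overline{E}'=-\bigtriangledown(V')-{\partial\overline{A}'\over\partial t}$ and $\overline{B}'=\bigtriangledown\times\overline{A}'$, organising them into the two homogeneous (source-free) equations, $\bigtriangledown\centerdot\overline{B}'=0$ and $\bigtriangledown\times\overline{E}'=-{\partial\overline{B}'\over\partial t}$, and the two inhomogeneous (source) equations, Gauss's law and the Amp\`ere--Maxwell law. The homogeneous pair will follow purely from the potential representation together with the vector calculus identities $\bigtriangledown\centerdot(\bigtriangledown\times\overline{A}')=0$ and $\bigtriangledown\times(\bigtriangledown V')=\overline{0}$; neither the gauge condition nor the wave equations is needed here, since for instance $\bigtriangledown\times\overline{E}'=-\bigtriangledown\times\bigtriangledown(V')-{\partial\over\partial t}(\bigtriangledown\times\overline{A}')=-{\partial\overline{B}'\over\partial t}$ once the curl is commuted with the time derivative.

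The substance lies in the two inhomogeneous equations, where the Lorentz gauge condition and the wave equations $(i)$, $(ii)$ enter. For Gauss's law I would compute
$$\bigtriangledown\centerdot\overline{E}'=-\bigtriangledown^{2}(V')-{\partial\over\partial t}(\bigtriangledown\centerdot\overline{A}'),$$
then use the Lorentz gauge $\bigtriangledown\centerdot\overline{A}'=-\mu_{0}\epsilon_{0}{\partial V'\over\partial t}$ to rewrite the second term as $\mu_{0}\epsilon_{0}{\partial^{2}V'\over\partial t^{2}}$, obtaining $\bigtriangledown\centerdot\overline{E}'=-\left(\bigtriangledown^{2}(V')-\mu_{0}\epsilon_{0}{\partial^{2}V'\over\partial t^{2}}\right)$; equation $(i)$ then gives $\bigtriangledown\centerdot\overline{E}'={\rho\over\epsilon_{0}}$, as required.

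For the Amp\`ere--Maxwell law I would start from $\bigtriangledown\times\overline{B}'=\bigtriangledown\times(\bigtriangledown\times\overline{A}')=\bigtriangledown(\bigtriangledown\centerdot\overline{A}')-\bigtriangledown^{2}\overline{A}'$, substitute the Lorentz gauge to write $\bigtriangledown(\bigtriangledown\centerdot\overline{A}')=-\mu_{0}\epsilon_{0}{\partial\over\partial t}\bigtriangledown(V')$, and substitute equation $(ii)$ to write $-\bigtriangledown^{2}\overline{A}'=\mu_{0}\overline{J}-\mu_{0}\epsilon_{0}{\partial^{2}\overline{A}'\over\partial t^{2}}$. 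Collecting the two time-derivative terms reconstitutes $\mu_{0}\epsilon_{0}{\partial\over\partial t}\left(-\bigtriangledown(V')-{\partial\overline{A}'\over\partial t}\right)=\mu_{0}\epsilon_{0}{\partial\overline{E}'\over\partial t}$, so that $\bigtriangledown\times\overline{B}'=\mu_{0}\overline{J}+\mu_{0}\epsilon_{0}{\partial\overline{E}'\over\partial t}$.

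I do not anticipate a genuine obstacle: the argument is a sign-careful bookkeeping of standard vector identities, and the only hypothesis doing real work is that $\{\overline{A}',V'\}$ satisfy the Lorentz gauge and the decoupled wave equations $(i)$, $(ii)$ simultaneously. The mildest technical point is that every manipulation requires commuting spatial derivatives (divergence, curl, Laplacian) with ${\partial\over\partial t}$, which is justified by the smoothness of the potentials; the original unprimed data $\{\overline{E},\overline{B},\overline{A},V\}$ plays no role in the verification beyond fixing the sources $\{\rho,\overline{J}\}$ against which the primed fields are tested.
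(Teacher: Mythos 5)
Your proposal is correct, but it takes a different route from the paper. The paper's proof is a subtraction argument: it first invokes the standard fact (cited to Griffiths) that the original Lorentz-gauge potentials $\{\overline{A},V\}$ already satisfy the inhomogeneous wave equations $(i)$, $(ii)$, then forms the differences $V''=V-V'$, $\overline{A}''=\overline{A}-\overline{A}'$, which satisfy the \emph{homogeneous} wave equations together with the gauge condition, and verifies that the corresponding difference fields $\{\overline{E}'',\overline{B}''\}$ satisfy Maxwell's equations in vacuum; linearity then transfers the conclusion to $\{\overline{E}',\overline{B}'\}$ with the original sources. You instead verify the four inhomogeneous Maxwell equations for $\{\overline{E}',\overline{B}'\}$ directly from the potential representation, the gauge condition, and $(i)$, $(ii)$ — and your computations (the identity $\bigtriangledown\times(\bigtriangledown\times\overline{A}')=\bigtriangledown(\bigtriangledown\centerdot\overline{A}')-\bigtriangledown^{2}\overline{A}'$, the sign bookkeeping in Gauss's law, the reassembly of $\mu_{0}\epsilon_{0}{\partial\overline{E}'\over\partial t}$) all check out. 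Your route is actually more self-contained: as you observe, the unprimed data enters only through the statement, so you never need the cited fact that $\{\overline{A},V\}$ solve $(i)$, $(ii)$, and in effect you prove the converse direction of that fact from scratch. What the paper's route buys is that the verification is done against zero sources, which makes the algebra of step $(d)$ marginally cleaner and reuses a textbook result rather than reproving it. Notably, the paper itself acknowledges your strategy: in the proof of Lemma \ref{jefimenko} it remarks that one can ``verify Maxwell's equations directly\ldots just replacing the conditions $(i)'(ii)'$\ldots with their non-homogeneous versions,'' which is precisely what you have done.
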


\begin{proof}
It follows from \cite{G} that $\{\overline{A},\overline{V}\}$ satisfy the equations $(i),(ii)$ from the lemma. Let $V''=V-V'$, $\overline{A}''=\overline{A}-\overline{A}'$, then $\{\overline{A}'',V''\}$ satisfy the equations;\\

$\bigtriangledown^{2}(V'')-\mu_{0}\epsilon_{0}{\partial^{2} V''\over \partial t^{2}}=0$ $(i)'$\\

$\bigtriangledown^{2}(\overline{A}'')-\mu_{0}\epsilon_{0}{\partial^{2} \overline{A}''\over \partial t^{2}}=\overline{0}$ $(ii)'$\\

and the Lorentz gauge condition. Let $\{\overline{E}'',\overline{B}''\}$ be the corresponding fields. Then, it is sufficient to show that they satisfy Maxwell's equation in vacuum;\\

$\bigtriangledown\centerdot\overline{E}''=0$ $(a)$\\

$\bigtriangledown\centerdot\overline{B}''=0$ $(b)$\\

$\bigtriangledown\times \overline{E}''=-{\partial \overline{B}''\over \partial t}$ $(c)$\\

$\bigtriangledown\times \overline{B}''=\mu_{0}\epsilon_{0}{\partial \overline{E}''\over \partial t}$ $(d)$\\

For $(a)$, we have, using the definition of $\overline{E}''$, the Lorentz gauge condition, and condition $(i)'$ that;\\

$\bigtriangledown\centerdot\overline{E}''=\bigtriangledown\centerdot(-\bigtriangledown(V'')-{\partial \overline{A''}\over \partial t})$\\

$=-\bigtriangledown^{2}(V'')-{\partial(\bigtriangledown\centerdot\overline{A''})\over \partial t}$\\

$=-\mu_{0}\epsilon_{0}{\partial^{2}V''\over \partial t^{2}}-{\partial (\bigtriangledown.\centerdot\overline{A''})\over \partial t}$\\

$=-\mu_{0}\epsilon_{0}{\partial^{2}V''\over \partial t^{2}}+\mu_{0}\epsilon_{0}{\partial^{2}V''\over \partial t^{2}}=0$\\

For $(b)$, we have, using vector analysis, see \cite{BK}, (4.46);\\

$\bigtriangledown\centerdot\overline{B}''=\bigtriangledown\centerdot(\bigtriangledown \times\overline{A''})=0$\\

For $(c)$, we have, using the definition of $\overline{E}''$, $\overline{B}''$ and vector analysis, see \cite{BK}, (4.47), that;\\

$\bigtriangledown\times \overline{E}''=\bigtriangledown\times(-\bigtriangledown(V'')-{\partial \overline{A''}\over \partial t})$\\

$=-(\bigtriangledown \times(\bigtriangledown (V))-{\partial (\bigtriangledown\times\overline{A}'')\over\partial t}$\\

$=-{\partial \overline{B}''\over \partial t}$\\

For $(d)$, we have, using the definition of $\overline{B}''$ and condition $(ii)'$ that;\\

$\bigtriangledown\times \overline{B}''=\bigtriangledown\times(\bigtriangledown\times\overline{A}'')$\\

A simple calculation shows that;\\

$\bigtriangledown\times(\bigtriangledown\times\overline{A}'')=(d_{1},d_{2},d_{3})=(c_{1},c_{2},c_{3})$\\

where;\\

$d_{1}={\partial^{2}a_{2}\over \partial x \partial y}+{\partial^{2}a_{3}\over \partial x \partial z}+(-{\partial^{2}a_{1}\over \partial y^{2}}-{\partial^{2}a_{1}\over \partial z^{2}})$\\

$d_{2}={\partial^{2}a_{1}\over \partial x \partial y}+{\partial^{2}a_{3}\over \partial y \partial z}+(-{\partial^{2}a_{2}\over \partial x^{2}}-{\partial^{2}a_{2}\over \partial x^{2}})$\\

$d_{3}={\partial^{2}a_{1}\over \partial x \partial z}+{\partial^{2}a_{2}\over \partial y \partial z}+(-{\partial^{2}a_{3}\over \partial x^{2}}-{\partial^{2}a_{3}\over \partial y^{2}})$\\

$c_{1}={\partial^{2}a_{2}\over \partial x \partial y}+{\partial^{2}a_{3}\over \partial x \partial z}+({\partial^{2}a_{1}\over \partial x^{2}}-\mu_{0}\epsilon_{0}{\partial^{2}a_{1}\over \partial t^{2} })$\\

$c_{2}={\partial^{2}a_{1}\over \partial x \partial y}+{\partial^{2}a_{3}\over \partial y \partial z}+({\partial^{2}a_{2}\over \partial x^{2}}-\mu_{0}\epsilon_{0}{\partial^{2}a_{2}\over \partial t^{2} })$\\

$c_{3}={\partial^{2}a_{1}\over \partial x \partial z}+{\partial^{2}a_{2}\over \partial y \partial z}+({\partial^{2}a_{3}\over \partial z^{2}}-\mu_{0}\epsilon_{0}{\partial^{2}a_{3}\over \partial t^{2}})$\\

whereas, using the definition of $\overline{E}''$;\\

$\mu_{0}\epsilon_{0}{\partial \overline{E}''\over \partial t}$\\

$=\mu_{0}\epsilon_{0}{\partial (-\bigtriangledown (V'')-{\partial \overline{A}'' \over \partial t})\over \partial t}$\\

$=-\mu_{0}\epsilon_{0}\bigtriangledown ({\partial V''\over\partial t})-\mu_{0}\epsilon_{0}{\partial^{2}\overline{A}''\over \partial t^{2}}$\\

It is, therefore, sufficient to prove that;\\

$-\mu_{0}\epsilon_{0}\bigtriangledown({\partial V''\over \partial t})=(e_{1},e_{2},e_{3})$, where;\\

$e_{1}={\partial^{2}a_{2}\over \partial x \partial y}+{\partial^{2}a_{3}\over \partial x \partial z}+{\partial^{2}a_{1}\over \partial x^{2}}$\\

$e_{2}={\partial^{2}a_{1}\over \partial x \partial y}+{\partial^{2}a_{3}\over \partial y \partial z}+{\partial^{2}a_{2}\over \partial x^{2}}$\\

$e_{3}={\partial^{2}a_{1}\over \partial x \partial z}+{\partial^{2}a_{2}\over \partial y \partial z}+{\partial^{2}a_{3}\over \partial z^{2}}$\\

This holds, using the Lorentz gauge condition, as;\\

$-\mu_{0}\epsilon_{0}\bigtriangledown({\partial V''\over \partial t})$\\

$=-\mu_{0}\epsilon_{0}\bigtriangledown (-{\bigtriangledown\centerdot \overline{A}''\over \mu_{0}\epsilon_{0} })$\\

$=\bigtriangledown (\bigtriangledown\centerdot \overline{A}'')$\\

$=\bigtriangledown ({\partial a_{1}\over \partial x}+{\partial a_{2}\over \partial y}+{\partial a_{3}\over \partial z})$\\

$=(e_{1},e_{2},e_{3})$\\

\end{proof}

\begin{lemma}
\label{jefimenko}
Let the potentials $\{\overline{A}',V'\}$ be defined be given as retarded potentials;\\

$V'(\overline{r},t)={1\over 4\pi\epsilon_{0}}\int {\rho(\overline{r}',t_{r})\over \mathfrak{r}}d\tau'$\\

$\overline{A}'(\overline{r},t)={\mu_{0}\over 4\pi}\int {\overline{J}(\overline{r}',t_{r})\over \mathfrak{r}}d\tau'$\\

then, assuming $\{\rho, \overline{J}\}$ satisfy the continuity equation and $\overline{J}$ vanishes at infinity, these potentials satisfy the Lorentz gauge condition and the equations in the hypotheses $(i)$ and $(ii)$ from Lemma \ref{lorentz}. In particular, the corresponding fields $\{\overline{E}',\overline{B}'\}$, given by Jefimenko's Equations;\\

$\overline{E}'(\overline{r},t)={1\over 4\pi\epsilon_{0}}\int [{\rho(\overline{r'},t_{r})\over \mathfrak{r}^{2}}\hat{\mathfrak{\overline{r}}}+{\dot{\rho}(\overline{r'},t_{r})\over c\mathfrak{r}}\hat{\mathfrak{\overline{r}}}-{\dot{\overline{J}}(\overline{r'},t_{r})\over c^{2}\mathfrak{r}}]d\tau'$\\

$\overline{B}'(\overline{r},t)={\mu_{0}\over 4\pi}\int [{\overline{J}(\overline{r'},t_{r})\over \mathfrak{r}^{2}}+{\dot{\overline{J}}(\overline{r'},t_{r})\over c\mathfrak{r}}]\times \hat{\mathfrak{\overline{r}}} d\tau'$\\

satisfy Maxwell's equations.

\end{lemma}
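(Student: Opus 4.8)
The plan is to verify the two hypotheses of Lemma \ref{lorentz} for the retarded potentials $\{\overline{A}',V'\}$ — that they satisfy the inhomogeneous wave equations $(i)$ and $(ii)$ and the Lorentz gauge condition — and then to check separately that the fields displayed as Jefimenko's equations coincide with $\overline{E}'=-\bigtriangledown(V')-{\partial\overline{A}'\over\partial t}$ and $\overline{B}'=\bigtriangledown\times\overline{A}'$. Once these are in place, Lemma \ref{lorentz} applies directly and yields that $\{\overline{E}',\overline{B}'\}$ satisfy Maxwell's equations for $\{\rho,\overline{J}\}$.

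For equations $(i)$ and $(ii)$, the key point is that the retarded kernel is the Green's function of the wave operator. Writing $\square^{2}=\bigtriangledown^{2}-\mu_{0}\epsilon_{0}{\partial^{2}\over\partial t^{2}}$, with the derivatives taken in the field variables $(\overline{r},t)$, I would show for a scalar source $f$ that
\[
\square^{2}\left({f(\overline{r}',t_{r})\over\mathfrak{r}}\right)=-4\pi\,\delta^{3}(\overline{r}-\overline{r}')\,f(\overline{r}',t)
\]
in the distributional sense. Away from $\mathfrak{r}=0$, the retarded combination $t_{r}=t-\mathfrak{r}/c$ makes the integrand solve the homogeneous wave equation, a direct chain-rule check using $c^{2}=1/(\mu_{0}\epsilon_{0})$; the singularity at $\mathfrak{r}=0$ contributes the $-4\pi\delta^{3}$ term exactly as for the static kernel $1/\mathfrak{r}$, which one isolates by integrating over a small sphere about $\overline{r}'$ and passing to the limit. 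Applying this with $f=\rho/\epsilon_{0}$, and componentwise with $f=\mu_{0}\overline{J}$, and integrating against $d\tau'$ reproduces $(i)$ and $(ii)$.

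The main obstacle is the Lorentz gauge condition, where both the continuity equation and the vanishing of $\overline{J}$ at infinity are essential. Taking the field-variable divergence of $\overline{A}'$ and expanding, I would use the identity $\bigtriangledown\centerdot({\overline{J}/\mathfrak{r}})=-\bigtriangledown'\centerdot({\overline{J}/\mathfrak{r}})+({1/\mathfrak{r}})(\bigtriangledown'\centerdot\overline{J})$, where the source-point divergence $\bigtriangledown'\centerdot\overline{J}$ acts through both the explicit $\overline{r}'$ dependence and the retarded time $t_{r}$. The continuity equation then replaces $\bigtriangledown'\centerdot\overline{J}$ by $-\dot{\rho}$, while the first term integrates to a surface term over a sphere at infinity that vanishes precisely because $\overline{J}$ decays there; the surviving contribution is seen to equal $-\mu_{0}\epsilon_{0}\,{\partial V'\over\partial t}$, which is the gauge condition. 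Keeping precise track of which variable each $\bigtriangledown$ acts on, and of the retarded-time dependence hidden inside $\bigtriangledown'\centerdot\overline{J}$, is the delicate bookkeeping that makes this the crux of the argument, and it is exactly the point the introduction flags as not fully transparent in \cite{G}.

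Finally, to identify the fields I would differentiate the potentials under the integral sign, applying the chain rule through both $\mathfrak{r}$ and $t_{r}=t-\mathfrak{r}/c$. For $\overline{E}'$, the gradient of $\rho(\overline{r}',t_{r})/\mathfrak{r}$ splits into a term from $\bigtriangledown(1/\mathfrak{r})$ giving the Coulomb-type contribution ${\rho\over\mathfrak{r}^{2}}\hat{\mathfrak{\overline{r}}}$ and a term from $\bigtriangledown t_{r}=-\hat{\mathfrak{\overline{r}}}/c$ giving ${\dot{\rho}\over c\mathfrak{r}}\hat{\mathfrak{\overline{r}}}$, while $-{\partial\overline{A}'\over\partial t}$ supplies the ${\dot{\overline{J}}\over c^{2}\mathfrak{r}}$ term; the curl computation for $\overline{B}'$ is entirely analogous, the two terms arising from differentiating $1/\mathfrak{r}$ and from $t_{r}$. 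This confirms that the displayed expressions are indeed $\{\overline{E}',\overline{B}'\}$. With $(i)$, $(ii)$ and the gauge condition established, Lemma \ref{lorentz} then gives that $\{\overline{E}',\overline{B}'\}$ satisfy Maxwell's equations, completing the proof.
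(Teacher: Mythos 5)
Your proposal is correct and follows the same route as the paper: verify that the retarded potentials satisfy the Lorentz gauge condition and the inhomogeneous wave equations $(i)$, $(ii)$, identify the fields they generate with the displayed Jefimenko expressions, and then invoke Lemma \ref{lorentz}. The only difference is one of exposition rather than substance --- the paper delegates all three verifications to \cite{G} (p424 with its footnote and Exercise 10.8, and p427--428) and to \cite{Gr}, whereas you sketch the Green's-function computation, the continuity-plus-surface-term argument for the gauge condition, and the differentiation under the integral explicitly, which is precisely the content of those citations.
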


\begin{proof}
The first part of the claim, under the stated hypotheses, is proved in \cite{G}, p424, see also footnote 2 of that page and Exercise 10.8, with the solutions given in \cite{Gr}. For the final part of the claim, one can assume the existence of a solution $\{\rho,\overline{J},\overline{E},\overline{B}\}$ to Maxwell's equations, then construct potentials $\{V,\overline{A}\}$ abstractly, satisfying the Lorentz gauge condition, as is done in \cite{G}. Applying the result of Lemma \ref{lorentz}, we obtain the result. Alternatively, one can verify Maxwell's equations directly, for $\{\overline{E}',\overline{B}'\}$, using the method of Lemma \ref{lorentz}, just replacing the conditions $(i)'(ii)'$ on $\{V',\overline{A}'\}$, in the proof, with their non-homogeneous versions. The fact that the fields $\{\overline{E}',\overline{B}'\}$ are given by Jefimenko's equations is proved in \cite{G}, p427-428.

\end{proof}

\begin{rmk}
\label{jefimenko2}
There is an alternative strategy to construct explicit solutions for $(\overline{A},V)$, given the corresponding fields $\{\overline{E},\overline{B}\}$, satisfying Maxwell's equations, and the potentials satisfying the Lorentz gauge condition. Namely, one can use the explicit formulas, suitably rescaled, for solutions to the homogeneous and inhomogeneous wave equations given in \cite{E}, (p73 formula (22) and p82 formula (44)) respectively, with the initial conditions given by $\{V_{0},V_{0,t},\overline{A}_{0},\overline{A}_{0,t}\}$ and the driving terms given by $\{-{\rho\over\epsilon_{0}},-\mu_{0}\overline{J}\}$. In the context of Lemma \ref{jefimenko}, one can easily compute the initial conditions, and, in principle derive new formulas for $(\overline{A},V)$, replacing the retarded potentials, and for $\{\overline{E},\overline{B}\}$, replacing Jefimenko's equations.

\end{rmk}
\end{section}
\begin{section}{A No Radiation Condition}
We now clarify some results about smoothly decaying solutions to Maxwell's equations in free space.\\

\begin{defn}
\label{squares}
By Maxwell's equations in free space, we mean;\\

$(i)$. $div(\overline{E})=0$\\

$(ii)$. $div(\overline{B})=0$\\

$(iii)$. $\bigtriangledown\times\overline{E}=-{\partial \overline{B}\over \partial t}$\\

$(iv)$.  $\bigtriangledown\times\overline{B}=\mu_{0}\epsilon_{0}{\partial \overline{E}\over \partial t}$\\

We abbreviate the operator $\bigtriangledown^{2}-\mu_{0}\epsilon_{0}{\partial^{2}\over \partial t^{2}}$ by $\square^{2}$\\

\end{defn}

\begin{lemma}
\label{solution}
The smoothly decaying solutions of the wave equation $\square^{2}(\overline{E})=\overline{0}$ are given by;\\

$\overline{E}(\overline{x},t)=\int_{\mathcal{R}^{3}}\overline{A}(\overline{k})e^{i(\overline{k}\centerdot\overline{x}-\omega(\overline{k})t)}d\overline{k}+\int_{\mathcal{R}^{3}}\overline{B}(\overline{k})e^{i(\overline{k}\centerdot\overline{x}+\omega(\overline{k})t)}d\overline{k}$\\

where $A,B\subset S(\mathcal{R}^{3})$\\

while the smoothly decaying solutions of Maxwell's equations in free space, are given by;\\

$\overline{E}(\overline{x},t)=\int_{\overline{k}\in\mathcal{R}^{3}}\int_{S_{\overline{k}}}G(\overline{k},\overline{n})e^{i(\overline{k}\centerdot\overline{x}-\omega(\overline{k})t)}d\overline{S}_{\overline{k}}(\overline{n})d\overline{k}$\\

$+\int_{\overline{k}\in\mathcal{R}^{3}}\int_{S_{\overline{k}}}H(\overline{k},\overline{n})e^{i(\overline{k}\centerdot\overline{x}+\omega(\overline{k})t)}d\overline{S}_{\overline{k}}(\overline{n})d\overline{k}$ $(*)$\\

where $k=|\overline{k}|$, $\omega(\overline{k})=c|\overline{k}|={k\over \sqrt{\mu_{0}\epsilon_{0}}}$ and  $\{G,H\}\subset \mathcal{S}(M)$ where $S_{\overline{k}}=(S^{2}(\overline{k},1)\cap P_{\overline{k}})$, $P_{\overline{k}}=\{\overline{n}:(\overline{n}-\overline{k})\centerdot\overline{k}=0\}$, $d\overline{S}_{\overline{k}}(\overline{n})=(\overline{n}-\overline{k})dS_{\overline{k}}$ $M=\{(\overline{k},\overline{n})\in\mathcal{R}^{6}:(\overline{n}-\overline{k})\centerdot\overline{k}=0,|\overline{n}-\overline{k}|=1\}$ and $\mathcal{S}(M)=\{f\in C(M):\int_{S_{\overline{k}}}fd\overline{S}_{\overline{k}}\in\mathcal{S}(\mathcal{R}^{3},\mathcal{R}_{\geq 0},\mathcal{C})\}$\\

$\overline{B}(\overline{x},t)=\int_{\overline{k}\in\mathcal{R}^{3}}\int_{S_{\overline{k}}}\overline{M}(\overline{k},\overline{n})e^{i(\overline{k}\centerdot\overline{x}-\omega(\overline{k})t)}dS_{\overline{k}}(\overline{n})d\overline{k}$\\

$+\int_{\overline{k}\in\mathcal{R}^{3}}\int_{S_{\overline{k}}}\overline{N}(\overline{k},\overline{n})e^{i(\overline{k}\centerdot\overline{x}+\omega(\overline{k})t)}dS_{\overline{k}}(\overline{n})d\overline{k}$ $(**)$\\

where $k=|\overline{k}|$, $\omega(\overline{k})=c|\overline{k}|={k\over \sqrt{\mu_{0}\epsilon_{0}}}$,  $\overline{M}(\overline{k},\overline{n})={-G(\overline{k},\overline{n})\over\omega(\overline{k})}(\overline{k}\times \overline{n})$\\

$\overline{N}(\overline{k},\overline{n})={H(\overline{k},\overline{n})\over\omega(\overline{k})}(\overline{k}\times \overline{n})$\\

Finally, if $\overline{E}$ is a smoothly decaying solutions of the wave equation $\square^{2}(\overline{E})=\overline{0}$, there exists a pair $(\overline{E}_{0},\overline{B}_{0})$ which is a smoothly decaying solution of Maxwell's equations in free space, such that;\\

$\bigtriangledown\times(\overline{E}-\overline{E}_{0})=\overline{0}$\\

\end{lemma}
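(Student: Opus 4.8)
The plan is to pass to the Fourier side and perform a Helmholtz-type splitting of the amplitudes into transverse and longitudinal components. First I would invoke the first part of the Lemma to write the given solution as
\[
\overline{E}(\overline{x},t)=\int_{\mathcal{R}^{3}}\overline{A}(\overline{k})e^{i(\overline{k}\cdot\overline{x}-\omega(\overline{k})t)}\,d\overline{k}+\int_{\mathcal{R}^{3}}\overline{B}(\overline{k})e^{i(\overline{k}\cdot\overline{x}+\omega(\overline{k})t)}\,d\overline{k},
\]
with $\overline{A},\overline{B}\in S(\mathcal{R}^{3})$. For each $\overline{k}\neq\overline{0}$ I would decompose $\overline{A}(\overline{k})=\overline{A}_{T}(\overline{k})+\overline{A}_{L}(\overline{k})$, where $\overline{A}_{L}(\overline{k})=|\overline{k}|^{-2}(\overline{A}(\overline{k})\cdot\overline{k})\overline{k}$ is the part parallel to $\overline{k}$ and $\overline{A}_{T}(\overline{k})=\overline{A}(\overline{k})-\overline{A}_{L}(\overline{k})$ the part orthogonal to $\overline{k}$, and similarly for $\overline{B}$.

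The transverse part should furnish the free-space solution. Since $\overline{A}_{T}(\overline{k})$ lies in the plane $P_{\overline{k}}$, I would represent it as a circle integral over $S_{\overline{k}}$ by setting $G(\overline{k},\overline{n})=\frac{1}{\pi}\,\overline{A}_{T}(\overline{k})\cdot(\overline{n}-\overline{k})$ and $H(\overline{k},\overline{n})=\frac{1}{\pi}\,\overline{B}_{T}(\overline{k})\cdot(\overline{n}-\overline{k})$. Using the orthonormal parametrization $\overline{n}-\overline{k}=\cos\theta\,\hat{e}_{1}+\sin\theta\,\hat{e}_{2}$ of $S_{\overline{k}}$ together with $\int_{0}^{2\pi}\cos^{2}\theta\,d\theta=\pi$ and $\int_{0}^{2\pi}\sin\theta\cos\theta\,d\theta=0$, a short computation gives $\int_{S_{\overline{k}}}G(\overline{k},\overline{n})\,d\overline{S}_{\overline{k}}(\overline{n})=\overline{A}_{T}(\overline{k})$, and likewise for $H$. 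Defining $\overline{E}_{0}$ by formula $(*)$ with these $G,H$, and $\overline{B}_{0}$ by $(**)$ with the associated $\overline{M},\overline{N}$, the second part of the Lemma guarantees that $(\overline{E}_{0},\overline{B}_{0})$ solves Maxwell's equations in free space, provided $G,H\in\mathcal{S}(M)$.

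It then remains to verify the curl condition. By construction the amplitude of $\overline{E}-\overline{E}_{0}$ at $\overline{k}$ is the longitudinal vector $\overline{A}_{L}(\overline{k})$ for the first term and $\overline{B}_{L}(\overline{k})$ for the second, so differentiating under the integral sign gives
\[
\bigtriangledown\times(\overline{E}-\overline{E}_{0})=\int_{\mathcal{R}^{3}}i\,\overline{k}\times\overline{A}_{L}(\overline{k})\,e^{i(\overline{k}\cdot\overline{x}-\omega(\overline{k})t)}\,d\overline{k}+\int_{\mathcal{R}^{3}}i\,\overline{k}\times\overline{B}_{L}(\overline{k})\,e^{i(\overline{k}\cdot\overline{x}+\omega(\overline{k})t)}\,d\overline{k}.
\]
Since $\overline{A}_{L}(\overline{k})$ and $\overline{B}_{L}(\overline{k})$ are each parallel to $\overline{k}$, both cross products vanish identically, so $\bigtriangledown\times(\overline{E}-\overline{E}_{0})=\overline{0}$; the rapid decay of $\overline{A},\overline{B}$ at infinity justifies the differentiation under the integral.

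The main obstacle is the membership $G,H\in\mathcal{S}(M)$, which hinges on the behaviour of the projection at the origin. The Helmholtz projector $\overline{A}\mapsto\overline{A}_{T}$ is homogeneous of degree zero in $\overline{k}$ and is therefore generically discontinuous at $\overline{k}=\overline{0}$, so $\overline{A}_{T}$ need not be Schwartz even though $\overline{A}$ is. This is precisely why the definition of $\mathcal{S}(M)$ asks only for continuity of the amplitude together with the integrated transverse amplitude being Schwartz, rather than pointwise smoothness; the delicate point I would need to control is the single degenerate fibre $\{\overline{0}\}\times S^{2}$ of $M$, where the circle $S_{\overline{k}}$ expands to the full sphere. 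I expect this to be handled exactly as in the proof of the second part of the Lemma, invoking the smoothly decaying hypothesis to keep the low-frequency contribution well behaved, so that $G$ and $H$ genuinely lie in $\mathcal{S}(M)$ and $(\overline{E}_{0},\overline{B}_{0})$ is a bona fide smoothly decaying free-space solution.
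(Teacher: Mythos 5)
Your treatment of the final claim is essentially the paper's own argument: the paper likewise splits $\overline{A}(\overline{k})$ into $\overline{A}_{2}(\overline{k})=|\overline{k}|^{-2}(\overline{A}(\overline{k})\centerdot\overline{k})\overline{k}$ and $\overline{A}_{1}=\overline{A}-\overline{A}_{2}$, realises the transverse part as an integral $\int_{S_{\overline{k}}}G(\overline{k},\overline{n})\,d\overline{S}_{\overline{k}}(\overline{n})$, invokes the second part of the lemma to supply $\overline{B}_{0}$, and kills the curl of the longitudinal remainder via $\overline{k}\times\overline{k}=\overline{0}$ under the integral sign. In fact your explicit kernel $G(\overline{k},\overline{n})=\frac{1}{\pi}\,\overline{A}_{T}(\overline{k})\centerdot(\overline{n}-\overline{k})$, verified by the $\int_{0}^{2\pi}\cos^{2}\theta\,d\theta=\pi$ computation, is more concrete than the paper's rather terse assertion that one can write $\overline{A}_{1}(\overline{k})=A_{2}(\overline{k})(\overline{n}-\overline{k})$ with $G\in S(M)$; and your honest flagging of the degenerate fibre at $\overline{k}=\overline{0}$ identifies a point the paper glosses over entirely, so neither of those is a defect relative to the paper.

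There are, however, two genuine omissions. First, the statement you were asked to prove is the whole lemma, and your proposal proves only its final sentence: you \emph{invoke} the first and second parts rather than establish them, whereas they occupy the bulk of the paper's proof. The paper proves the first part by applying the Fourier inversion theorem in $\overline{x}$ for each fixed $t$ (this is where the smoothly decaying hypothesis is essential), reducing $\square^{2}(\overline{E})=\overline{0}$ to the ODE $|\overline{k}|^{2}\hat{E_{i}}+c^{-2}\partial_{t}^{2}\hat{E_{i}}=0$, whence $\hat{E_{i}}=A_{i}(\overline{k})e^{-i|\overline{k}|ct}+B_{i}(\overline{k})e^{i|\overline{k}|ct}$; the second part then requires deriving transversality from $div(\overline{E})=0$ — the paper evaluates at $t=0$ and $t=1$ to get $(\overline{A}+\overline{B})\centerdot\overline{k}=0$ and $(\overline{A}e^{-ck}+\overline{B}e^{ck})\centerdot\overline{k}=0$, hence $\overline{A}\centerdot\overline{k}=\overline{B}\centerdot\overline{k}=0$ — and obtaining $\overline{M},\overline{N}$ from Faraday's law by integrating $\partial\overline{B}/\partial t$ in $t$ and checking the integration "constant" $\overline{\theta}(\overline{x})$ vanishes. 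None of this appears in your proposal, so as a proof of the lemma it is incomplete. Second, even within the final part, you omit the reality check the paper carries out: if $\overline{E}$ is real one has $\overline{A}(\overline{k})^{*}=\overline{B}(-\overline{k})$, and the paper verifies $\overline{A}_{2}(\overline{k})^{*}=\overline{B}_{2}(-\overline{k})$ so that $\overline{E}-\overline{E}_{0}$, and hence $\overline{E}_{0}$, is a real field; your construction does satisfy this (the Helmholtz projector is even in $\overline{k}$), but the verification should be recorded, since otherwise $(\overline{E}_{0},\overline{B}_{0})$ is only a complex solution of the free-space equations.
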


\begin{proof}
It is easily checked that the solutions $(*)$, $(**)$ satisfy $(i)-(iv)$ of Definition \ref{squares}.
Conversely, let $\{\overline{E},\overline{B}\}$ be smooth solutions of $(i)-(iv)$. We have, using $(i),(iii),(iv)$, that;\\

$(\bigtriangledown\times\overline{E})=-{\partial \overline{B}\over \partial t}$, and, hence;\\

$\bigtriangledown\times(\bigtriangledown\times\overline{E})=-(\bigtriangledown\times {\partial \overline{B}\over \partial t})$\\

$-\bigtriangledown^{2}\overline{E}=-{\partial\over \partial t}(\bigtriangledown\times\overline{B})=-\mu_{0}\epsilon_{0}{\partial^{2}\overline{E}\over \partial t^{2}}$\\

$\bigtriangledown^{2}\overline{E}={1\over c^{2}}{\partial^{2}\overline{E}\over \partial t^{2}}$ ($c={1\over \sqrt{\mu_{0}\epsilon_{0}}}$)\\

If $E_{i}\in \mathcal{S}_{(\mathcal{R}^{3},\mathcal{R})}(\mathcal{R}^{4})$ solve the wave equation, $\bigtriangledown^{2}E_{i}={1\over c^{2}}{\partial^{2}E_{i}\over \partial t^{2}}$, where $\mathcal{S}_{(\mathcal{R}^{3},\mathcal{R})}(\mathcal{R}^{4})=\{f\in C^{\infty}(\mathcal{R}^{4}):f_{t}\in\mathcal{S}(\mathcal{R}^{3})\}$, for $t\in\mathcal{R}$, then;\\

$E_{i}(\overline{x},t)=({1\over 2\pi})^{3}\int_{\mathcal{R}^{3}}\hat{E_{i}}(\overline{k},t)e^{i\overline{k}\centerdot\overline{x}}d\overline{k}$\\

by the inversion theorem of Fourier analysis, see \cite{E} and \cite{SS}. Hence;\\

$\bigtriangledown^{2}E_{i}=-({1\over 2\pi})^{3}\int_{\mathcal{R}^{3}}|\overline{k}|^{2}\hat{E_{i}}(\overline{k},t)e^{i\overline{k}\centerdot\overline{x}}d\overline{k}$\\

${\partial^{2}E_{i}\over \partial t^{2}}=({1\over 2\pi})^{3}\int_{\mathcal{R}^{3}}{\partial^{2}\hat{E_{i}}\over \partial t^{2}}(\overline{k},t)e^{i\overline{k}\centerdot\overline{x}}d\overline{k}$\\

$\bigtriangledown^{2}E_{i}-{1\over c^{2}}{\partial^{2}E_{i}\over \partial t^{2}}=({1\over 2\pi})^{3}\int_{\mathcal{R}^{3}}(-|\overline{k}|^{2}\hat{E_{i}}-{1\over c^{2}}{\partial^{2}\hat{E_{i}}\over \partial t^{2}})(\overline{k},t)e^{i\overline{k}\centerdot\overline{x}}d\overline{k}=0$\\

so that;\\

$|\overline{k}|^{2}\hat{E_{i}}+{1\over c^{2}}{\partial^{2}\hat{E_{i}}\over \partial t^{2}}=0$\\

using the inversion formula again.\\

$\hat{E_{i}}(\overline{k},t)=A_{i}(\overline{k})e^{-i|\overline{k}|ct}+B_{i}(\overline{k})e^{i|\overline{k}|ct}$, as $\hat{E_{i}}(\overline{k},t)\in\mathcal{S}_{(\mathcal{R}^{3},\mathcal{R})}(\mathcal{R}^{4})$\\

$E_{i}(\overline{x},t)=({1\over 2\pi})^{3}(\int_{\mathcal{R}^{3}}(A_{i}(\overline{k})e^{-i|\overline{k}|ct})e^{i\overline{k}\centerdot\overline{x}}d\overline{k}+\int_{\mathcal{R}^{3}}(B_{i}(\overline{k})e^{i|\overline{k}|ct})e^{i\overline{k}\centerdot\overline{x}}d\overline{k})$\\

$=({1\over 2\pi})^{3}(\int_{\mathcal{R}^{3}}A_{i}(\overline{k})e^{i(\overline{k}\centerdot\overline{x}-\omega(\overline{k})t)}d\overline{k}+\int_{\mathcal{R}^{3}}B_{i}(\overline{k})e^{i(\overline{k}\centerdot\overline{x}+\omega(\overline{k})t)}d\overline{k})$\\

$\overline{E}(\overline{x},t)=\int_{\mathcal{R}^{3}}\overline{A}(\overline{k})e^{i(\overline{k}\centerdot\overline{x}-\omega(\overline{k})t)}d\overline{k}+\int_{\mathcal{R}^{3}}\overline{B}(\overline{k})e^{i(\overline{k}\centerdot\overline{x}+\omega(\overline{k})t)}d\overline{k}$\\

where $A,B\subset S(\mathcal{R}^{3})$, as required for the first part. Using $(i)$, we have that;\\

$\int_{\mathcal{R}^{3}}(\overline{A}(\overline{k})\centerdot i\overline{k})e^{i(\overline{k}\centerdot\overline{x}-\omega(\overline{k})t)}d\overline{k}+\int_{\mathcal{R}^{3}}(\overline{B}(\overline{k})\centerdot\overline{k})e^{i(\overline{k}\centerdot\overline{x}+\omega(\overline{k})t)}d\overline{k}=0$\\

At $t=0$ and $t=1$, we obtain that;\\

$\int_{\mathcal{R}^{3}}(\overline{A}(\overline{k})\centerdot i\overline{k})e^{i(\overline{k}\centerdot\overline{x})}d\overline{k}+\int_{\mathcal{R}^{3}}(\overline{B}(\overline{k})\centerdot\overline{k})e^{i(\overline{k}\centerdot\overline{x})}d\overline{k}=0$\\

$\int_{\mathcal{R}^{3}}(\overline{A}(\overline{k})\centerdot i\overline{k})e^{i(\overline{k}\centerdot\overline{x}-ck)}d\overline{k}+\int_{\mathcal{R}^{3}}(\overline{B}(\overline{k})\centerdot\overline{k})e^{i(\overline{k}\centerdot\overline{x}+ck)}d\overline{k}=0$\\

As this holds for all $\overline{x}\in\overline{R}^{3}$, using the inversion formula, we obtain that;\\

$(\overline{A}(\overline{k})+\overline{B}(\overline{k}))\centerdot \overline{k}=0$\\

$(\overline{A}(\overline{k})e^{-ck}+\overline{B}(\overline{k})e^{ck})\centerdot \overline{k}=0$\\

Assuming $k\neq 0$, we obtain that;\\

$\overline{A}(\overline{k})\centerdot \overline{k}=\overline{B}(\overline{k})\centerdot \overline{k}=0$\\

so that;\\

$A(\overline{k})=\int_{S_{\overline{k}}}G(\overline{k},\overline{n})d\overline{S}_{\overline{k}}(\overline{n})$, $B(\overline{k})=\int_{S_{\overline{k}}}H(\overline{k},\overline{n})d\overline{S}_{\overline{k}}(\overline{n})$ and the first part of the result $(*)$ follows.\\

Using the same argument, we obtain that;\\

$\overline{B}(\overline{x},t)$\\

$=\int_{\overline{k}\in\mathcal{R}^{3}}\int_{S_{\overline{k}}}\overline{K}(\overline{k},\overline{n})e^{i(\overline{k}\centerdot\overline{x}-\omega(\overline{k})t)}d\overline{S}_{\overline{k}}(\overline{n})d\overline{k}+\int_{\overline{k}\in\mathcal{R}^{3}}\int_{S_{\overline{k}}}\overline{L}(\overline{k},\overline{n})e^{i(\overline{k}\centerdot\overline{x}+\omega(\overline{k})t)}d\overline{S}_{\overline{k}}(\overline{n})d\overline{k}$ $(\dag\dag)$\\

Using $(iii)$;\\

${\partial \overline{B}\over \partial t}(\overline{x},t)$\\

$=\int_{\overline{k}\in\mathcal{R}^{3}}\int_{S_{\overline{k}}}iG(\overline{k},\overline{n})e^{i(\overline{k}\centerdot\overline{x}-\omega(\overline{k})t)}(d\overline{S}_{\overline{k}}(\overline{n})\times\overline{k})d\overline{k}$\\

$+\int_{\overline{k}\in\mathcal{R}^{3}}\int_{S_{\overline{k}}}iH(\overline{k},\overline{n})e^{i(\overline{k}\centerdot\overline{x}-\omega(\overline{k})t)}(d\overline{S}_{\overline{k}}(\overline{n})\times\overline{k})d\overline{k}$\\

$=\int_{\overline{k}\in\mathcal{R}^{3}}\int_{S_{\overline{k}}}iG(\overline{k},\overline{n})(({\overline{n}-\overline{k}})^{\hat{}}\times \overline{k})e^{i(\overline{k}\centerdot\overline{x}-\omega(\overline{k})t)}dS_{\overline{k}}(\overline{n})d\overline{k}$\\

$+\int_{\overline{k}\in\mathcal{R}^{3}}\int_{S_{\overline{k}}}iH(\overline{k},\overline{n})(({\overline{n}-\overline{k}})^{\hat{}}\times \overline{k})e^{i(\overline{k}\centerdot\overline{x}+\omega(\overline{k})t)}dS_{\overline{k}}(\overline{n})d\overline{k}$\\

$=\int_{\overline{k}\in\mathcal{R}^{3}}\int_{S_{\overline{k}}}iG(\overline{k},\overline{n})(\overline{n}\times \overline{k})e^{i(\overline{k}\centerdot\overline{x}-\omega(\overline{k})t)}dS_{\overline{k}}(\overline{n})d\overline{k}$\\

$+\int_{\overline{k}\in\mathcal{R}^{3}}\int_{S_{\overline{k}}}iH(\overline{k},\overline{n})(\overline{n}\times \overline{k})e^{i(\overline{k}\centerdot\overline{x}+\omega(\overline{k})t)}dS_{\overline{k}}(\overline{n})]d\overline{k}$\\

$\overline{B}(\overline{x},t)$\\

$=\int_{\overline{k}\in\mathcal{R}^{3}}\int_{S_{\overline{k}}}{-G(\overline{k},\overline{n})\over\omega(\overline{k})}(\overline{n}\times \overline{k})e^{i(\overline{k}\centerdot\overline{x}-\omega(\overline{k})t)}dS_{\overline{k}}(\overline{n})d\overline{k}$\\

$+\int_{\overline{k}\in\mathcal{R}^{3}}\int_{S_{\overline{k}}}{H(\overline{k},\overline{n})\over\omega(\overline{k})}(\overline{n}\times \overline{k})e^{i(\overline{k}\centerdot\overline{x}+\omega(\overline{k})t)}dS_{\overline{k}}(\overline{n})]d\overline{k}+\overline{\theta}(\overline{x})$\\

$=\int_{\overline{k}\in\mathcal{R}^{3}}\int_{S_{\overline{k}}}\overline{M}(\overline{k},\overline{n})e^{i(\overline{k}\centerdot\overline{x}-\omega(\overline{k})t)}dS_{\overline{k}}(\overline{n})d\overline{k}$\\

$+\int_{\overline{k}\in\mathcal{R}^{3}}\int_{S_{\overline{k}}}\overline{N}(\overline{k},\overline{n})e^{i(\overline{k}\centerdot\overline{x}+\omega(\overline{k})t)}dS_{\overline{k}}(\overline{n})d\overline{k}$ $(**)$\\

where $\overline{M}(\overline{k},\overline{n})={-G(\overline{k},\overline{n})\over\omega(\overline{k})}(\overline{k}\times \overline{n})$, $\overline{N}(\overline{k},\overline{n})={H(\overline{k},\overline{n})\over\omega(\overline{k})}(\overline{k}\times \overline{n})$, and $\overline{\theta}(\overline{x})=0$, using $(\dag\dag)$.\\

This proves the second part of the result. For the final part, we can use the first part to write;\\

$\overline{E}(\overline{x},t)=\int_{\mathcal{R}^{3}}\overline{A}(\overline{k})e^{i(\overline{k}\centerdot\overline{x}-\omega(\overline{k})t)}d\overline{k}+\int_{\mathcal{R}^{3}}\overline{B}(\overline{k})e^{i(\overline{k}\centerdot\overline{x}+\omega(\overline{k})t)}d\overline{k}$, $(\dag)$\\

where $\overline{A},\overline{B}\subset S(\mathcal{R}^{3})$. For $\overline{k}\neq\overline{0}$, let;\\

$\overline{A}_{2}(\overline{k})={(\overline{A}(\overline{k})\centerdot\overline{k})\over |\overline{k}|^{2}}\overline{k}=A_{3}(\overline{k})\overline{k}$\\

$\overline{A}_{1}(\overline{k})=\overline{A}(\overline{k})-\overline{A}_{2}(\overline{k})$\\

Then $\overline{A}_{1}(\overline{k})\centerdot\overline{k}=0$, and we can write;\\

$\overline{A}_{1}(\overline{k})=A_{2}(\overline{k})(\overline{n}-\overline{k})$ with $\overline{n}\in S_{\overline{k}}$ and $A_{2}\in S(\mathcal{R}^{3})$\\

so that;\\

$\overline{A}_{1}(\overline{k})=\int_{S_{\overline{k}}}G(\overline{k},\overline{n})d\overline{S_{\overline{k}}}(\overline{n})$ with $G\in S(M)$\\

It follows that we can write;\\

$\int_{\mathcal{R}^{3}}\overline{A}(\overline{k})e^{i(\overline{k}\centerdot\overline{x}-\omega(\overline{k})t)}d\overline{k}$\\

$=\int_{\overline{k}\in \mathcal{R}^{3}}\int_{S_{\overline{k}}}G(\overline{k},\overline{n})e^{i(\overline{k}\centerdot\overline{x}-\omega(\overline{k})t)}d\overline{S_{\overline{k}}}(\overline{n})d\overline{k}+\int_{\overline{k}\in \mathcal{R}^{3}}A_{3}(\overline{k})\overline{k}e^{i(\overline{k}\centerdot\overline{x}-\omega(\overline{k})t)}\overline{k}d\overline{k}$\\

Similarly, repeating the procedure for $\overline{B}(\overline{k})$, we can write;\\

$\int_{\mathcal{R}^{3}}\overline{B}(\overline{k})e^{i(\overline{k}\centerdot\overline{x}+\omega(\overline{k})t)}d\overline{k}$\\

$=\int_{\overline{k}\in \mathcal{R}^{3}}\int_{S_{\overline{k}}}H(\overline{k},\overline{n})e^{i(\overline{k}\centerdot\overline{x}+\omega(\overline{k})t)}d\overline{S_{\overline{k}}}(\overline{n})d\overline{k}+\int_{\overline{k}\in \mathcal{R}^{3}}B_{3}(\overline{k})\overline{k}e^{i(\overline{k}\centerdot\overline{x}+\omega(\overline{k})t)}\overline{k}d\overline{k}$\\

We set;\\

$\overline{E}_{0}(\overline{x},t)=\int_{\overline{k}\in \mathcal{R}^{3}}\int_{S_{\overline{k}}}G(\overline{k},\overline{n})e^{i(\overline{k}\centerdot\overline{x}-\omega(\overline{k})t)}d\overline{S_{\overline{k}}}(\overline{n})d\overline{k}+\int_{\overline{k}\in \mathcal{R}^{3}}\int_{S_{\overline{k}}}H(\overline{k},\overline{n})e^{i(\overline{k}\centerdot\overline{x}+\omega(\overline{k})t)}d\overline{S_{\overline{k}}}(\overline{n})d\overline{k}$\\

We need to check that if $\overline{E}$ is real, then so is $\overline{E}_{0}$. If $\overline{E}={\overline{E}}^{*}$, then, using $(\dag)$ and equating coefficients, we have that;\\

$\overline{A}(\overline{k})^{*}=\overline{B}(-\overline{k})$, $\overline{B}(\overline{k})^{*}=\overline{A}(-\overline{k})$\\

It follows that;\\

$\overline{A}_{2}(\overline{k})^{*}={(\overline{A}(\overline{k})^{*}\centerdot\overline{k})\over |\overline{k}|^{2}}\overline{k}$\\

$={(\overline{B}(-\overline{k})\centerdot\overline{k})\over |\overline{k}|^{2}}\overline{k}$\\

$=-{(\overline{B}(-\overline{k})\centerdot -\overline{k})\over |-\overline{k}|^{2}}\overline{k}$\\

$=\overline{B}_{2}(-\overline{k})$\\

and, similarly $\overline{B}_{2}(\overline{k})^{*}=\overline{A}_{2}(-\overline{k})$, so that $\overline{E}-\overline{E}_{0}$ is real, and, therefore, $\overline{E}_{0}$ is real.\\

Using the second part, we can find $\overline{B}_{0}$ such that $(\overline{E}_{0},\overline{B}_{0})$ is a smoothly decaying solution of Maxwell's equations in free space. Finally, we compute;\\

$\bigtriangledown\times (\overline{E}-\overline{E}_{0})$\\

$=\bigtriangledown\times (\int_{\overline{k}\in \mathcal{R}^{3}}A_{3}(\overline{k})e^{i(\overline{k}\centerdot\overline{x}-\omega(\overline{k})t)}\overline{k}d\overline{k}+\int_{\overline{k}\in \mathcal{R}^{3}}B_{3}(\overline{k})e^{i(\overline{k}\centerdot\overline{x}+\omega(\overline{k})t)}\overline{k}d\overline{k})$\\

$=\int_{\overline{k}\in \mathcal{R}^{3}}A_{3}(\overline{k})e^{i(\overline{k}\centerdot\overline{x}-\omega(\overline{k})t)}(-i\overline{k}\times\overline{k})d\overline{k}$\\

$+\int_{\overline{k}\in \mathcal{R}^{3}}B_{3}(\overline{k})e^{i(\overline{k}\centerdot\overline{x}+\omega(\overline{k})t)}(-i\overline{k}\times\overline{k})d\overline{k}=\overline{0}$\\

as required.\\

\end{proof}

\begin{lemma}
\label{electricmagnetic}
Let $(\rho,\overline{J},\overline{E},\overline{B})$ satisfy Maxwell's equations, then;\\

$\square^{2}(\overline{E})={\bigtriangledown(\rho)\over\epsilon_{0}}+\mu_{0}{\partial\overline{J}\over \partial t}$\\

$\square^{2}(\overline{B})=-\mu_{0}(\bigtriangledown\times\overline{J})$\\

\end{lemma}
\begin{proof}
As is done in \cite{G}, we can choose potentials $(V,\overline{A})$ such that;\\

$\overline{E}=-\bigtriangledown (V)-{\partial \overline{A}\over \partial t}$\\

$\overline{B}=\bigtriangledown\times \overline{A}$\\

and $(V,\overline{A})$ satisfy the Lorentz gauge condition. It follows, see Lemma \ref{lorentz}, that $(V,\overline{A})$ satisfy the equations;\\

$\square^{2}(V)=-{\rho\over\epsilon_{0}}$\\

$\square^{2}(\overline{A})=-\mu_{0}\overline{J}$\\

It is an easy exercise in vector calculus to show that $\square^{2}$ commutes with the gradient operator $\bigtriangledown$, the curl operator $\bigtriangledown\times$ and partial differentiation ${\partial\over \partial t}$. Therefore, we compute;\\

$\square^{2}(\overline{E})=\square^{2}(-\bigtriangledown (V)-{\partial \overline{A}\over \partial t})$\\

$=-\bigtriangledown(\square^{2}(V))-{\partial (\square^{2}(\overline{A}))\over \partial t}$\\

$=-\bigtriangledown(-{\rho\over\epsilon_{0}})-{\partial (-\mu_{0}\overline{J}) \over \partial t}$\\

$=\bigtriangledown({\rho\over\epsilon_{0}})+\mu_{0}{\partial\overline{J} \over \partial t}$\\

and;\\

$\square^{2}(\overline{B})=\square^{2}(\bigtriangledown\times \overline{A})$\\

$=\bigtriangledown\times \square^{2}(\overline{A})$\\

$=\bigtriangledown\times (-\mu_{0}\overline{J})$\\

$=-\mu_{0}(\bigtriangledown\times \overline{J})$\\

\end{proof}

\begin{lemma}
\label{waveequation}
Let $\rho$ satisfy the wave equation $\square^{2}(\rho)=0$, with the initial conditions ${\partial \rho\over \partial t}|_{t=0}=0$ and $\rho|_{t=0}\in S(\overline{R}^{3})$, then there exists $\overline{J}$ such that $(\rho,\overline{J})$ satisfies the continuity equation, $\overline{J}$ satisfies the wave equation $\square^{2}(\overline{J})=\overline{0}$ with $\bigtriangledown\times\overline{J}=\overline{0}$, and $(\overline{E},\overline{B})$ such that $(\rho,\overline{J},\overline{E},\overline{B})$ satisfy Maxwell's equations, with $\square^{2}(\overline{E})=\overline{0}$ and $\overline{B}=\overline{0}$. In particular, the Poynting vector $\overline{E}\times \overline{B}=\overline{0}$\\
\end{lemma}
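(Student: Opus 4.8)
The plan is to build everything explicitly from $\rho$ in Fourier space, and then to remove the magnetic field using the curl-splitting in the final part of Lemma \ref{solution} together with Liouville's theorem. First I would apply the first part of Lemma \ref{solution} to the scalar $\rho$ (the componentwise argument is identical) to write $\rho(\overline{x},t)=\int_{\mathcal{R}^{3}}\hat{\rho}_{0}(\overline{k})\cos(\omega(\overline{k})t)e^{i\overline{k}\centerdot\overline{x}}d\overline{k}$, where the single cosine time-dependence is forced by the initial condition ${\partial\rho\over\partial t}|_{t=0}=\overline{0}$, $\hat{\rho}_{0}$ is the Fourier transform of $\rho|_{t=0}\in S(\overline{R}^{3})$ (hence Schwartz), and $\omega(\overline{k})=c|\overline{k}|$. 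I would then define $\overline{J}$ by its transform $\hat{\overline{J}}(\overline{k},t)={i\over|\overline{k}|^{2}}\hat{\dot{\rho}}(\overline{k},t)\,\overline{k}$, which is precisely the curl-free field whose divergence is $-{\partial\rho\over\partial t}$. Thus the continuity equation and $\bigtriangledown\times\overline{J}=\overline{0}$ hold by construction, and since $\hat{\dot{\rho}}\propto\sin(\omega(\overline{k})t)$ the time factor gives $\square^{2}\overline{J}=\overline{0}$ immediately.

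The one regularity check I would carry out carefully is that $\overline{J}$ is genuinely smoothly decaying: the apparent $|\overline{k}|^{-2}$ singularity at the origin is removable because $\hat{\overline{J}}=-ic\,\hat{\rho}_{0}(\overline{k})\,{\sin(c|\overline{k}|t)\over|\overline{k}|}\,\overline{k}$, and ${\sin(c|\overline{k}|t)\over|\overline{k}|}$ is a smooth (even) function of $|\overline{k}|$, so the product with the Schwartz factor $\hat{\rho}_{0}$ stays in $S(\mathcal{R}^{3})$. The key identity I would then record is ${\partial\overline{J}\over\partial t}=-c^{2}\bigtriangledown\rho$, equivalently ${\bigtriangledown\rho\over\epsilon_{0}}+\mu_{0}{\partial\overline{J}\over\partial t}=\overline{0}$; in Fourier this is just the dispersion relation $\omega^{2}=c^{2}|\overline{k}|^{2}$, and conceptually it is forced by $\square^{2}(\rho)=0$ together with uniqueness of a decaying curl-free field of prescribed divergence.

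For the fields, I would use Lemma \ref{jefimenko} to take any smoothly decaying solution $(\overline{E}_{1},\overline{B}_{1})$ of Maxwell's equations for the source $(\rho,\overline{J})$. By Lemma \ref{electricmagnetic} and the identity above, $\square^{2}(\overline{E}_{1})={\bigtriangledown\rho\over\epsilon_{0}}+\mu_{0}{\partial\overline{J}\over\partial t}=\overline{0}$ and $\square^{2}(\overline{B}_{1})=-\mu_{0}(\bigtriangledown\times\overline{J})=\overline{0}$. I would then apply the final part of Lemma \ref{solution} to $\overline{E}_{1}$ to obtain a free-space solution $(\overline{E}_{0},\overline{B}_{0})$ with $\bigtriangledown\times(\overline{E}_{1}-\overline{E}_{0})=\overline{0}$, and set $\overline{E}=\overline{E}_{1}-\overline{E}_{0}$, $\overline{B}=\overline{B}_{1}-\overline{B}_{0}$. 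Subtracting a source-free solution preserves the source, so $(\rho,\overline{J},\overline{E},\overline{B})$ still satisfies Maxwell's equations, while now $\overline{E}$ is curl-free and $\square^{2}(\overline{E})=\overline{0}$.

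Finally, to kill $\overline{B}$: Faraday's law gives ${\partial\overline{B}\over\partial t}=-\bigtriangledown\times\overline{E}=\overline{0}$, and $\square^{2}(\overline{B})=\overline{0}$ from Lemma \ref{electricmagnetic} then collapses to $\bigtriangledown^{2}\overline{B}=\overline{0}$; a smoothly decaying harmonic field vanishes by Liouville's theorem, so $\overline{B}=\overline{0}$ and hence $\overline{E}\times\overline{B}=\overline{0}$. I expect the main obstacle to be exactly this last step: guaranteeing that the field difference is smoothly decaying (so that Liouville is applicable), which rests on the decay of the Jefimenko fields for Schwartz sources, and, more minor, the removable-singularity bookkeeping at $\overline{k}=\overline{0}$. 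A self-contained alternative that avoids Jefimenko is to define $\overline{E}$ directly by $\hat{\overline{E}}=-{i\over\epsilon_{0}|\overline{k}|^{2}}\hat{\rho}(\overline{k},t)\,\overline{k}$ and set $\overline{B}=\overline{0}$, verifying $(i)$--$(iv)$ by hand (with Ampère's law reducing to ${\partial\overline{E}\over\partial t}=-\overline{J}/\epsilon_{0}$), but this produces an $\overline{E}$ that is only polynomially decaying, so I would prefer the Liouville route above.
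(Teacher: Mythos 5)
Your proposal is correct, and up to the endgame it coincides with the paper's proof in substance: your Fourier-space current $\hat{\overline{J}}(\overline{k},t)={i\over|\overline{k}|^{2}}\hat{\dot{\rho}}(\overline{k},t)\,\overline{k}$ is exactly the transform of the paper's definition $\overline{J}={-1\over\epsilon_{0}\mu_{0}}\int_{0}^{t}\bigtriangledown(\rho)\,ds$, and both arguments then run through Lemma \ref{jefimenko}, Lemma \ref{electricmagnetic} with the identity ${\bigtriangledown(\rho)\over\epsilon_{0}}+\mu_{0}{\partial\overline{J}\over\partial t}=\overline{0}$, and the final part of Lemma \ref{solution}, arriving at a Maxwell solution with $\bigtriangledown\times(\overline{E}-\overline{E}_{0})=\overline{0}$ and $\overline{B}-\overline{B}_{0}$ magnetostatic. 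You genuinely diverge at the last step. The paper never shows $\overline{B}-\overline{B}_{0}=\overline{0}$: it introduces a compensating static current $\mu_{0}\overline{J}_{0}=\bigtriangledown\times(\overline{B}-\overline{B}_{0})$, observes that $(0,\overline{J}_{0},\overline{0},\overline{B}-\overline{B}_{0})$ solves Maxwell's equations, and subtracts it, so the final current is $\overline{J}-\overline{J}_{0}$ and $\overline{B}=\overline{0}$ holds by construction --- after which it must re-derive the continuity equation, $\bigtriangledown\times(\overline{J}-\overline{J}_{0})=\overline{0}$, and $\square^{2}(\overline{J}-\overline{J}_{0})=\overline{0}$ via the div--curl identity (legitimate, since the lemma only asserts existence of \emph{some} $\overline{J}$). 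You instead keep the explicit $\overline{J}$, whose curl-freeness and wave equation are immediate from the Fourier formula, and kill the residual field outright: magnetostatic plus $\square^{2}(\overline{B})=\overline{0}$ gives harmonic, and Liouville plus vanishing at infinity gives $\overline{B}=\overline{0}$. That is precisely the argument the paper itself deploys later, in Lemma \ref{movingframes}, so your route is sanctioned by the paper's own standards; its cost is the boundedness and decay of $\overline{B}_{1}-\overline{B}_{0}$ (Jefimenko field for Schwartz sources minus the free-space field), which you correctly flag and which the paper asserts without proof in the analogous step of Lemma \ref{movingframes}, whereas the paper's subtraction trick needs no boundedness hypothesis at all but buys this at the price of changing the current and repeating the verifications. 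One further remark: your closing ``alternative'' ($\hat{\overline{E}}=-{i\over\epsilon_{0}|\overline{k}|^{2}}\hat{\rho}\,\overline{k}$ with $\overline{B}=\overline{0}$) in fact already proves the lemma as literally stated, since the statement nowhere requires $\overline{E}$ to be smoothly decaying --- and when the total charge $\int\rho_{0}\,d\overline{x}\neq 0$, Gauss's law forces a Coulomb tail on any admissible $\overline{E}$, so Schwartz decay of the electric field is unattainable by either route in that case.
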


\begin{proof}
Define $\overline{J}$ by;\\

$\overline{J}={-1\over \epsilon_{0}\mu_{0}}\int_{0}^{t}\bigtriangledown(\rho)ds$\\

Then, by the fundamental theorem of calculus, we have ${\partial \overline{J}\over\partial t}=-{\bigtriangledown(\rho)\over \epsilon_{0}\mu_{0}}$. In particular, ${\bigtriangledown(\rho)\over \epsilon_{0}}+\mu_{0}{\partial \overline{J}\over\partial t}=\overline{0}$, $(*)$. Applying the divergence operator, differentiating under the integral sign, using the wave equation for $\rho$, and the first initial condition, we have;\\

$div(\overline{J})={-1\over \epsilon_{0}\mu_{0}}\int_{0}^{t}div(\bigtriangledown(\rho))ds$\\

$={-1\over \epsilon_{0}\mu_{0}}\int_{0}^{t}\bigtriangledown^{2}(\rho))ds$\\

$=-\int_{0}^{t}{\partial^{2}\rho\over\partial s^{2}}ds$\\

$=-{\partial \rho\over \partial t}$\\

so that $(\rho,\overline{J})$ satisfy the continuity equation. By the second initial condition, that $\rho_{0}$ belongs to the Schwartz class, and the general solution of the homogeneous wave equation with initial conditions, see \cite{E}, we have that $\overline{J}$ vanishes at infinity. It follows, applying the result of Lemma \ref{jefimenko}, that we can find a pair $(\overline{E},\overline{B})$, such that $(\rho,\overline{J},\overline{E},\overline{B})$ satisfy Maxwell's equations, and, by the result of Lemma \ref{electricmagnetic} and the condition $(*)$, we have that $\square^{2}(\overline{E})=\overline{0}$, so that $\overline{E}$ also satisfies the wave equation. Moreover, by the explicit formulas in Jefimenko's equations, one can check that $\overline{E}$ is also smoothly decaying. Then, using the result of Lemma \ref{solution}, we can find $(\overline{E}_{0},\overline{B}_{0})$, which are smoothly decaying solutions of Maxwell's equations in free space and with $\bigtriangledown\times(\overline{E}-\overline{E}_{0})=\overline{0}$, $(\dag)$. Clearly, $(\rho,\overline{J},\overline{E}-\overline{E}_{0},\overline{B}-\overline{B}_{0})$ still satisfy Maxwell's equations, and, by $(\dag)$ and $(ii)$, we have that;\\

$-{\partial (\overline{B}-\overline{B}_{0})\over \partial t}$\\

$=\bigtriangledown\times(\overline{E}-\overline{E}_{0})=\overline{0}$\\

that is the field $\overline{B}-\overline{B}_{0}$ is magnetostatic. Set $\mu_{0}\overline{J}_{0}=\bigtriangledown\times (\overline{B}-\overline{B}_{0})$, then $(0,\overline{J}_{0},\overline{0},\overline{B}-\overline{B}_{0})$ satisfy Maxwell's equations, so that, subtracting solutions, $(\rho,\overline{J}-\overline{J}_{0},\overline{E}-\overline{E}_{0},\overline{0})$ also satisfies Maxwell's equations. We must have then that $(\rho,\overline{J}-\overline{J}_{0})$ satisfies the continuity equation. As $(\overline{E}_{0},\overline{B}_{0})$ were solutions to Maxwell's equations in free space, we have $\square^{2}(\overline{E}_{0})=\overline{0}$, so that, as $\square^{2}(\overline{E})=\overline{0}$, we must have $\square^{2}(\overline{E}-\overline{E}_{0})=\overline{0}$ as well. By Lemma \ref{electricmagnetic}, we have that;\\

$\bigtriangledown\times (\overline{J}-\overline{J}_{0})={-1\over \mu_{0}}\square^{2}(\overline{0})=\overline{0}$, $(\dag)$\\

By elementary vector calculus, $(\dag)$, the continuity equation, the fact that $\square^{2}(\overline{E}-\overline{E}_{0})=\overline{0}$, and the first result of Lemma \ref{electricmagnetic}, we have that;\\

$\bigtriangledown^{2}(\overline{J}-\overline{J}_{0})$\\

$=\bigtriangledown(div(\overline{J}-\overline{J}_{0}))-\bigtriangledown\times (\bigtriangledown\times (\overline{J}-\overline{J}_{0}))$\\

$=\bigtriangledown(div(\overline{J}-\overline{J}_{0}))$\\

$=-\bigtriangledown({\partial \rho\over \partial t})$\\

$=-{\partial(-\epsilon_{0}\mu_{0}{\partial(\overline{J}-\overline{J}_{0})\over \partial t})\over \partial t}$\\

$={1\over c^{2}}{\partial^{2}(\overline{J}-\overline{J}_{0})\over \partial t^{2}}$\\

so that $\square^{2}(\overline{J}-\overline{J}_{0})=\overline{0}$. This proves the main claim. The fact that the Poynting vector is zero follows trivially from the fact that the magnetic field vanishes.

\end{proof}

We now strengthen this result;\\

\begin{lemma}
\label{movingframes}
Let $(\rho,\overline{J},\overline{E},\overline{B})$ satisfy the conclusions of Lemma \ref{waveequation}, then in any inertial frame $S'$ moving with velocity vector $\overline{v}$ relative to $S$, if $(\rho',\overline{J}')$ are the transformed charge distribution and current, there exists a pair $(\overline{E}',\overline{B}')$ such that $(\rho',\overline{J}',\overline{E}',\overline{B}')$ satisfy Maxwell's equations in $S'$, and $\square'^{2}(\overline{E}')=\overline{0}$ with $\overline{B}'=\overline{0}$ in $S'$. In particular, the Poynting vector $\overline{E}'\times\overline{B}'=\overline{0}$. Moreover, the pair $(\rho',\overline{J}')$ still satisfy the wave equations $\square'^{2}(\rho')=0$ and $\square'^{2}(\overline{J}')=\overline{0}$, with $\bigtriangledown\times \overline{J}'=\overline{0}$.

\end{lemma}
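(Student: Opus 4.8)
The plan is to define the transformed pair $(\rho',\overline{J}')$ through the Lorentz transformation of the four-current $J^{\mu}=(c\rho,\overline{J})$, and then to reconstruct the fields in $S'$ by the recipe of Lemma \ref{waveequation}, the one new ingredient being a Liouville argument that guarantees the \emph{given} current $\overline{J}'$ is reproduced exactly. First I would record the ``easy'' conclusions. Since the d'Alembertian $\square^{2}$ is a Lorentz-invariant scalar operator acting componentwise on the four-vector $J^{\mu}$, the equations $\square^{2}(\rho)=0$ and $\square^{2}(\overline{J})=\overline{0}$ in $S$ pass to $\square'^{2}(\rho')=0$ and $\square'^{2}(\overline{J}')=\overline{0}$ in $S'$; similarly the continuity equation is the vanishing of the four-divergence $\partial_{\mu}J^{\mu}$ and so is frame independent. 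I would also note that $(\rho',\overline{J}')$ stay smoothly decaying, their components being constant linear combinations of the original Schwartz quantities composed with the linear change of coordinates; this boundedness is precisely what will later license Liouville's theorem.

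The key step is to establish $\bigtriangledown'\times\overline{J}'=\overline{0}$ together with the relation $\bigtriangledown'(\rho')/\epsilon_{0}+\mu_{0}\,\partial\overline{J}'/\partial t'=\overline{0}$ in $S'$, call it $(*')$. I would package both facts covariantly by considering the antisymmetric tensor $\Omega^{\mu\nu}=\partial^{\mu}J^{\nu}-\partial^{\nu}J^{\mu}$. Its purely spatial components are exactly the components of $\bigtriangledown\times\overline{J}$, which vanishes by hypothesis, while its mixed components $\Omega^{0i}$ vanish if and only if $\bigtriangledown(\rho)/\epsilon_{0}+\mu_{0}\,\partial\overline{J}/\partial t=\overline{0}$ holds in $S$; but this last quantity is $\square^{2}(\overline{E})$ by Lemma \ref{electricmagnetic}, and $\square^{2}(\overline{E})=\overline{0}$ is among the conclusions of Lemma \ref{waveequation}. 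Hence $\Omega^{\mu\nu}=0$ in $S$, and since $\Omega^{\mu\nu}$ is a tensor it vanishes in every frame, delivering both $\bigtriangledown'\times\overline{J}'=\overline{0}$ and the relation $(*')$ in $S'$ at once.

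With $(*')$ available I would run the construction of Lemma \ref{waveequation} inside $S'$. Lemma \ref{jefimenko} produces a pair $(\overline{E}'_{J},\overline{B}'_{J})$ solving Maxwell's equations for the sources $(\rho',\overline{J}')$; by Lemma \ref{electricmagnetic} and $(*')$ one gets $\square'^{2}(\overline{E}'_{J})=\overline{0}$, so Lemma \ref{solution} supplies a smoothly decaying free-space pair $(\overline{E}'_{0},\overline{B}'_{0})$ with $\bigtriangledown'\times(\overline{E}'_{J}-\overline{E}'_{0})=\overline{0}$. Subtracting, $(\rho',\overline{J}',\overline{E}'_{J}-\overline{E}'_{0},\overline{B}'_{J}-\overline{B}'_{0})$ still solves Maxwell's equations, and Maxwell's third equation shows the residual field $\overline{B}'_{J}-\overline{B}'_{0}$ is magnetostatic, i.e. time independent.

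The final and hardest step is to show that this residual field actually vanishes, so that no spurious magnetostatic current is introduced and the current remains exactly $\overline{J}'$. Here Lemma \ref{electricmagnetic} gives $\square'^{2}(\overline{B}'_{J})=-\mu_{0}(\bigtriangledown'\times\overline{J}')=\overline{0}$, using the curl-free property just established, while $\square'^{2}(\overline{B}'_{0})=\overline{0}$ because free-space solutions satisfy the wave equation (the derivation for $\overline{E}$ in Lemma \ref{solution} applies verbatim to $\overline{B}$). Hence $\square'^{2}(\overline{B}'_{J}-\overline{B}'_{0})=\overline{0}$; being time independent, this field is harmonic in the spatial variables, and being smoothly decaying it is bounded, so Liouville's theorem forces each component to be constant and the decay forces that constant to be $\overline{0}$. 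Thus $\overline{B}'_{J}=\overline{B}'_{0}$, and $(\rho',\overline{J}',\overline{E}'_{J}-\overline{E}'_{0},\overline{0})$ satisfies Maxwell's equations in $S'$ with $\overline{B}'=\overline{0}$ and $\square'^{2}(\overline{E}')=\overline{0}$; the Poynting vector vanishes trivially, while the wave equations and the curl-free condition for $(\rho',\overline{J}')$ were already secured in the first two steps. The principal obstacles I anticipate are the correct covariant packaging of the two first-order relations into $\Omega^{\mu\nu}$, and checking that the smoothly decaying hypothesis survives the boost well enough on the tilted time-slices of $S'$ to justify both Jefimenko's construction and the application of Liouville's theorem.
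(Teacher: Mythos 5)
Your proposal is correct, but its middle section takes a genuinely different route from the paper's. The paper starts from the transformed \emph{fields}: using the explicit transformation rules $\overline{E}''=\overline{E}_{||}+\gamma\overline{E}_{\perp}$ and $\overline{B}''=-{\gamma\over c^{2}}(\overline{v}\times\overline{E})$ (valid since $\overline{B}=\overline{0}$ in $S$), it checks by hand that $\square'^{2}(\overline{E}'')=\square'^{2}(\overline{B}'')=\overline{0}$, then applies the last part of Lemma \ref{solution} and the Liouville argument to kill the magnetostatic residual; the wave equations for $(\rho',\overline{J}')$ are derived afterwards from their transformation rules, and $\bigtriangledown'\times\overline{J}'=\overline{0}$ is extracted at the very end from $\square'^{2}(\overline{B}')=\overline{0}$ via Lemma \ref{electricmagnetic}. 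You instead transform only the \emph{sources}, package the two first-order relations $\bigtriangledown\times\overline{J}=\overline{0}$ and ${\bigtriangledown(\rho)\over\epsilon_{0}}+\mu_{0}{\partial\overline{J}\over\partial t}=\overline{0}$ (the latter being $\square^{2}(\overline{E})$ by Lemma \ref{electricmagnetic}) into the antisymmetric tensor $\Omega^{\mu\nu}=\partial^{\mu}J^{\nu}-\partial^{\nu}J^{\mu}$, obtain both relations in $S'$ at once by tensoriality, and then rebuild the fields in $S'$ from scratch via Lemma \ref{jefimenko}, rerunning the construction of Lemma \ref{waveequation} there. Your covariant step is a slicker version of a computation the paper actually performs in coordinates: the proof of Lemma \ref{wavenonradiating} shows $\bigtriangledown'\times\overline{J}'=(\bigtriangledown\times\overline{J})_{||}+\gamma(\bigtriangledown\times\overline{J})_{\perp}+\gamma(\overline{v}\times(\bigtriangledown(\rho)+{1\over c^{2}}{\partial\overline{J}\over\partial t}))$, which is exactly the transformation law of your $\Omega^{\mu\nu}$ — so your route makes the frame-independence manifest and spares the component calculations. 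What the paper's route buys in exchange is that, by transporting the fields, it gets a Maxwell solution in $S'$ for free from the covariance of Maxwell's equations, avoiding a second invocation of Jefimenko and the attendant need to re-verify smooth decay of the $S'$ Jefimenko fields (you correctly flag the tilted-time-slice issue; the paper quietly assumes the analogous boundedness of its transformed fields before applying Liouville, so both arguments sit at the same level of analytic rigour on this point). The two proofs converge on the identical final step: the residual magnetostatic field satisfies the wave equation, hence Laplace's equation, and is bounded and vanishing at infinity, so Liouville's theorem forces it to vanish — which, as you rightly emphasise, is precisely what allows the given current $\overline{J}'$ to be kept exactly, rather than being modified by a corrective current $\overline{J}_{0}$ as happens in Lemma \ref{waveequation}.
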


\begin{proof}
Let $(\rho',\overline{J}',\overline{E}'',\overline{B}'')$ be the transformed quantities in $S'$, corresponding to $(\rho,\overline{J},\overline{E},\overline{B})$ in $S$. The transformation rule for the electric field, see \cite{lcl}, is given by;\\

$\overline{E}''=\overline{E}_{||}+\gamma(\overline{E}_{\perp}+\overline{v}\times\overline{B})$\\

where $||$ and $\perp$ denote the parallel and perpendicular components respectively. Note that $\overline{E}_{||}$ is defined in the equation by $(\overline{E}\centerdot\overline{v}){\overline{v}\over |\overline{v}|^{2}}$, and $\overline{E}_{\perp}$ by $\overline{E}-\overline{E}_{||}$. As $\overline{v}\times\overline{B}=\overline{0}$, from the assumption that $\overline{B}=\overline{0}$ in $S$, we have that an observer in $S'$
sees the electric field;\\

$\overline{E}''=\overline{E}_{||}+\gamma\overline{E}_{\perp}$\\

Let $\square'^{2}$ be the d'Alembertian operators in $S'$, then using the Lorentz invariance of the d'Alembertian operator, the obvious fact that it commutes with parallel and perpendicular components, the above transformation rule, and the fact that $\square^{2}(\overline{E})=\overline{0}$, we have;\\

$\square'^{2}(\overline{E}'')=\square'^{2}(\overline{E}_{||}+\gamma\overline{E}_{\perp})$\\

$=\square^{2}(\overline{E}_{||}+\gamma\overline{E}_{\perp})$\\

$=(\square^{2}(\overline{E}))_{||}+\gamma(\square^{2}\overline{E})_{\perp}=\overline{0}$\\

Similarly the transformation rule for the magnetic field, see \cite{lcl}, is given by;\\

$\overline{B}''=\overline{B}_{||}+\gamma(\overline{B}_{\perp}-{\overline{v}\times\overline{E}\over c^{2}})$\\

which, using the fact that $\overline{B}=\overline{0}$ in $S$ again, becomes;\\

$\overline{B}''=-{\gamma\over c^{2}}(\overline{v}\times\overline{E})$\\

Using, a similar argument to the above, this time using the fact that the d'Alembertian commutes with taking a cross product with $\overline{v}$, we have;\\

$\square'^{2}(\overline{B}'')=\square'^{2}(-{\gamma\over c^{2}}(\overline{v}\times\overline{E}))$\\

$=\square^{2}(-{\gamma\over c^{2}}(\overline{v}\times\overline{E}))$\\

$=-{\gamma\over c^{2}}(\overline{v}\times\square^{2}(\overline{E}))=\overline{0}$\\

As in Lemma \ref{waveequation}, and using the last part of the result of Lemma \ref{solution}, with the fact that $\square'^{2}(\overline{E}'')=\overline{0}$, we can find a pair $(\overline{E}_{0}'',\overline{B}_{0}'')$ which are smoothly decaying solutions of Maxwell's equation in free space, and with $\bigtriangledown\times (\overline{E}''-\overline{E}_{0}'')=\overline{0}$, $(\dag)$. Then clearly, we still have that;\\

$\square'^{2}(\overline{E}''-\overline{E}_{0}'')=\square'^{2}(\overline{B}''-\overline{B}_{0}'')=\overline{0}$\\

and, moreover, by Maxwell's equations in $S'$ and $(\dag)$;\\

$\bigtriangledown'\times (\overline{E}''-\overline{E}_{0}'')$\\

$=-{\partial\over \partial t}(\overline{B}''-\overline{B}_{0}'')=\overline{0}$\\

so that $\overline{B}''-\overline{B}_{0}''$ is magnetostatic. However, we then have;\\

$\square'^{2}(\overline{B}''-\overline{B}_{0}'')=\bigtriangledown'^{2}(\overline{B}''-\overline{B}_{0}'')=\overline{0}$\\

so that $\overline{B}''-\overline{B}_{0}''$ satisfies Laplace's equation and is harmonic. Using the fact that $\overline{B}''-\overline{B}_{0}''$ is bounded, we can use Liouville's theorem to conclude that $\overline{B}''-\overline{B}_{0}''$ is constant, and using the fact that $\overline{B}''-\overline{B}_{0}''$ vanishes at infinity, that in fact $\overline{B}''-\overline{B}_{0}''=\overline{0}$. Setting $\overline{E}'=\overline{E}''-\overline{E}_{0}''$ and $\overline{B}'=\overline{B}''-\overline{B}_{0}''$ then gives the firs result. The result about the Poynting vector is clear. Finally, we have the transformation rules for current and charge, see \cite{lcl}, given by;\\

$\rho'=\gamma(\rho-{vJ_{||}\over c^{2}})$\\

$\overline{J}'=\gamma(\overline{J}_{||}-\rho\overline{v})+\overline{J}_{\perp}$\\

where $\overline{v}=v\hat{\overline{v}}$ and $\overline{J}_{||}=J_{||}\hat{\overline{J}_{||}}$. We then compute, using the usual commutation rules, the transformation rules just given, and the fact that $\square^{2}(\rho)=0$ and $\square^{2}(\overline{J})=\overline{0}$ in $S$, that;\\

$\square'^{2}(\rho')=\square'^{2}(\gamma(\rho-{vJ_{||}\over c^{2}}))$\\

$=\square^{2}(\gamma(\rho-{vJ_{||}\over c^{2}}))$\\

$=\gamma(\square^{2}(\rho)-{v\square^{2}(J_{||})\over c^{2}})=0$\\

and;\\

$\square'^{2}(\overline{J}')=\square'^{2}(\gamma(\overline{J}_{||}-\rho\overline{v})+\overline{J}_{\perp})$\\

$=\square^{2}(\gamma(\overline{J}_{||}-\rho\overline{v})+\overline{J}_{\perp})$\\

$=(\gamma(\square^{2}(\overline{J})_{||}-\square^{2}(\rho)\overline{v})+\square^{2}(\overline{J})_{\perp})=\overline{0}$\\

Finally, the fact that $\bigtriangledown'\times \overline{J}'=\overline{0}$ follows from the result that $\square'^{2}(\overline{B}')=\overline{0}$ and the second result in Lemma \ref{electricmagnetic}.

\end{proof}

We now prove a kind of converse to this result. We first require a definition;\\

\begin{defn}
\label{nonradiating}
Let $(\rho,\overline{J})$ be a charge distribution and current, satisfying the continuity equation in the rest frame $S$. Then, we say that $(\rho,\overline{J})$ is non-radiating if in any inertial frame $S'$, with velocity vector $\overline{v}$, for the transformed current and charge $(\rho',\overline{J}')$, there exist electric and magnetic fields $(\overline{E}',\overline{B}')$ in $S'$ such that $(\rho',\overline{J}',\overline{E}',\overline{B}')$ satisfy Maxwell's equations in $S'$ and with $\overline{B}'=\overline{0}$.
\end{defn}

\begin{lemma}
\label{wavenonradiating}
Let $(\rho,\overline{J})$, as in Definition \ref{nonradiating}, be non-radiating, then $((\rho,\overline{J}))$ satisfy the wave equations $\square^{2}(\rho)=0$ and $\square^{2}(\overline{J})=\overline{0}$.

\end{lemma}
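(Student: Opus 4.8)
The plan is to run the computation of Lemma \ref{movingframes} backwards: I treat $\square^{2}(\overline{E})$ in the rest frame as an unknown vector field $\overline{W}$ and use the non-radiating hypothesis, applied in every inertial frame, to force $\overline{W}=\overline{0}$. First I would invoke Definition \ref{nonradiating} in the rest frame $S$ itself (the trivial boost $\overline{v}=\overline{0}$), obtaining fields $(\overline{E},\overline{B})$ with $\overline{B}=\overline{0}$ such that $(\rho,\overline{J},\overline{E},\overline{B})$ solve Maxwell's equations. With $\overline{B}=\overline{0}$, Faraday's law gives $\bigtriangledown\times\overline{E}=\overline{0}$, the Amp\`ere--Maxwell law gives $\overline{J}=-\epsilon_{0}{\partial\overline{E}\over\partial t}$, and Gauss's law gives $\rho=\epsilon_{0}(\bigtriangledown\centerdot\overline{E})$. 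Writing $\overline{W}=\square^{2}(\overline{E})$ and using that $\square^{2}$ commutes with $\bigtriangledown\centerdot$ and with ${\partial\over\partial t}$, I get $\square^{2}(\rho)=\epsilon_{0}\,\bigtriangledown\centerdot\overline{W}$ and $\square^{2}(\overline{J})=-\epsilon_{0}{\partial\overline{W}\over\partial t}$; hence the entire conclusion follows once I show $\overline{W}=\overline{0}$.

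Next I would fix an arbitrary inertial frame $S'$ with velocity $\overline{v}$ and transform this rest-frame solution. Exactly as in Lemma \ref{movingframes}, the hypothesis $\overline{B}=\overline{0}$ makes the transformed fields $(\rho',\overline{J}',\overline{E}'',\overline{B}'')$ carry magnetic part $\overline{B}''=-{\gamma\over c^{2}}(\overline{v}\times\overline{E})$, and $(\rho',\overline{J}',\overline{E}'',\overline{B}'')$ solves Maxwell's equations in $S'$ for the transformed sources $(\rho',\overline{J}')$. The non-radiating hypothesis now supplies a second, possibly different, solution $(\rho',\overline{J}',\overline{E}',\overline{0})$ for the \emph{same} sources, with vanishing magnetic field. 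By linearity of Maxwell's equations the difference $(\overline{E}''-\overline{E}',\overline{B}'')$ is a source-free solution in $S'$, so Lemma \ref{electricmagnetic}, applied in $S'$ with zero charge and current, yields $\square'^{2}(\overline{B}'')=-\mu_{0}(\bigtriangledown'\times\overline{0})=\overline{0}$.

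Finally I would transport this identity back to $S$. Invoking the Lorentz invariance of the d'Alembertian and the fact that it commutes with crossing by the constant vector $\overline{v}$ --- the two facts already used in Lemma \ref{movingframes} --- I compute $\square'^{2}(\overline{B}'')=\square^{2}\left(-{\gamma\over c^{2}}(\overline{v}\times\overline{E})\right)=-{\gamma\over c^{2}}(\overline{v}\times\square^{2}(\overline{E}))=-{\gamma\over c^{2}}(\overline{v}\times\overline{W})$. Combining with the previous step gives $\overline{v}\times\overline{W}=\overline{0}$ for every boost velocity $\overline{v}$; choosing two non-parallel directions then forces $\overline{W}=\overline{0}$, i.e. $\square^{2}(\overline{E})=\overline{0}$, and the reduction in the first paragraph delivers $\square^{2}(\rho)=0$ and $\square^{2}(\overline{J})=\overline{0}$. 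The main obstacle is the careful bookkeeping in this last step: I must confirm that $\square'^{2}$ applied to the $S'$-field $\overline{B}''$ coincides with $\square^{2}$ applied to its rest-frame expression $-{\gamma\over c^{2}}(\overline{v}\times\overline{E})$ --- precisely the invariance identity established in Lemma \ref{movingframes} --- and that the difference of the two $S'$-solutions really is source-free, which is immediate since both solutions carry the identical transformed sources $(\rho',\overline{J}')$.
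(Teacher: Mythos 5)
Your proposal is correct, and it reaches the conclusion by a genuinely different route than the paper. The paper's proof stays on the source side: it applies the second identity of Lemma \ref{electricmagnetic} to the non-radiating solution in each frame to get $\bigtriangledown'\times\overline{J}'=\overline{0}$, then carries out an explicit coordinate computation --- transforming both $\bigtriangledown'$ and $\overline{J}'$ with $\overline{v}=v\hat{x}$ --- to show $\bigtriangledown'\times\overline{J}'=\gamma\,\overline{v}\times(\bigtriangledown(\rho)+{1\over c^{2}}{\partial\overline{J}\over\partial t})$, whence arbitrariness of $\overline{v}$ forces $\bigtriangledown(\rho)+{1\over c^{2}}{\partial\overline{J}\over\partial t}=\overline{0}$; the wave equation for $\rho$ then comes from taking the divergence and invoking the continuity equation, and the one for $\overline{J}$ from the identity $\bigtriangledown^{2}\overline{J}=\bigtriangledown(div(\overline{J}))-\bigtriangledown\times(\bigtriangledown\times\overline{J})$ together with $\bigtriangledown\times\overline{J}=\overline{0}$. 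You instead work on the field side: subtracting the hypothesized $S'$-solution $(\rho',\overline{J}',\overline{E}',\overline{0})$ from the Lorentz transform $(\rho',\overline{J}',\overline{E}'',\overline{B}'')$ of the rest-frame solution yields a vacuum solution, so Lemma \ref{electricmagnetic} gives $\square'^{2}(\overline{B}'')=\overline{0}$, and the d'Alembertian-invariance computation of Lemma \ref{movingframes} converts this into $\overline{v}\times\square^{2}(\overline{E})=\overline{0}$ for every boost, hence $\square^{2}(\overline{E})=\overline{0}$; with $\overline{B}=\overline{0}$, Maxwell's equations give $\rho=\epsilon_{0}(\bigtriangledown\centerdot\overline{E})$ and $\overline{J}=-\epsilon_{0}{\partial\overline{E}\over\partial t}$, so both wave equations follow by commuting $\square^{2}$ with $\bigtriangledown\centerdot$ and ${\partial\over\partial t}$. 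The two pivots are in fact equivalent, since $\square'^{2}(\overline{B}'')=-\mu_{0}(\bigtriangledown'\times\overline{J}')$ and $\epsilon_{0}\square^{2}(\overline{E})=\bigtriangledown(\rho)+{1\over c^{2}}{\partial\overline{J}\over\partial t}$ by Lemma \ref{electricmagnetic} --- indeed the paper itself writes the term $\gamma(\overline{v}\times\square^{2}(\overline{E}))$ in the calculation of Lemma \ref{nononradiating3}. What each buys: the paper's route makes explicit the intermediate relations $\bigtriangledown\times\overline{J}=\overline{0}$ and $\bigtriangledown(\rho)+{1\over c^{2}}{\partial\overline{J}\over\partial t}=\overline{0}$, which are among the ``relations of Lemma \ref{waveequation}'' it wants to recover elsewhere; your route avoids the coordinate bookkeeping and the separate appeal to the continuity equation (which, in your setup, is automatic from the Maxwell identities above), at the cost of leaning on the Lorentz covariance of Maxwell's equations for the transformed solution and the invariance identity $\square'^{2}(\overline{B}'')=\square^{2}(-{\gamma\over c^{2}}(\overline{v}\times\overline{E}))$ --- both of which are already used, at exactly the same level of rigor, in Lemma \ref{movingframes}, so no new gap is introduced.
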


\begin{proof}
By the definition of non-radiating, there exist fields $(\overline{E},\overline{B})$ in the rest frame $S$ such that $(\rho,\overline{J},\overline{E},\overline{B})$ satisfy Maxwell's equations and $\overline{B}=\overline{0}$. We then have that $\square^{2}(\overline{B})=\overline{0}$ and, by the second result in Lemma \ref{electricmagnetic}, that $\bigtriangledown\times \overline{J}=\overline{0}$. By the same argument, and using the definition of non-radiating, we must have that $\bigtriangledown'\times \overline{J}'=\overline{0}$ for the transformed current and charge $(\rho',\overline{J}')$ in any inertial frame $S'$ with velocity vector $\overline{v}$. We now compute $\bigtriangledown'\times\overline{J}'$. We have, as above, the transformation rule for $\overline{J}'$ given by;\\

$\overline{J}'=\gamma(\overline{J}_{||}-\rho\overline{v})+\overline{J}_{\perp}$\\

so that, using elementary vector calculus;\\

$\bigtriangledown'\times\overline{J}'=\gamma(\bigtriangledown'\times \overline{J}_{||})+\gamma(\overline{v}\times \bigtriangledown'(\rho))+\bigtriangledown'\times \overline{J}_{\perp}$\\

We also have, see \cite{lcl}, the transformation rule for $\bigtriangledown'$;\\

$\bigtriangledown'=\gamma(\bigtriangledown_{||}+{\overline{v}\over c^{2}}{\partial\over\partial t})+\bigtriangledown_{\perp}$\\

Taking $\overline{v}=v\hat{x}$, we have that $\bigtriangledown_{||}=({\partial\over \partial x},0,0)$, $\bigtriangledown_{\perp}=(0,{\partial\over \partial y},{\partial\over \partial z})$, ${\overline{v}\over c^{2}}{\partial\over\partial t}=({v\over c^{2}}{\partial\over\partial t},0,0)$, $\overline{J}_{||}=(J_{1},0,0)$ and $\overline{J}_{\perp}=(0,J_{2},J_{3})$ so that;\\

$\bigtriangledown'=({\partial\over \partial x}',{\partial\over \partial y'},{\partial\over \partial z'})$\\

$=\gamma({\partial\over \partial x},0,0)+\gamma({v\over c^{2}}{\partial\over\partial t},0,0)+(0,{\partial\over \partial y},{\partial\over \partial z})$\\

$=(\gamma{\partial\over \partial x}+{\gamma v\over c^{2}}{\partial\over\partial t},{\partial\over \partial y},{\partial\over \partial z})$\\

while;\\

$\gamma(\bigtriangledown'\times \overline{J}_{||})=\gamma(0,{\partial J_{1}\over \partial z'},-{\partial J_{1}\over \partial y'})$\\

$=(0,\gamma{\partial J_{1}\over \partial z},-\gamma{\partial J_{1}\over \partial y})$\\

and;\\

$\bigtriangledown'\times \overline{J}_{\perp}=({\partial J_{3}\over\partial y'}-{\partial J_{2}\over\partial z'},-{\partial J_{3}\over\partial x'},{\partial J_{2}\over\partial x'})$\\

$=({\partial J_{3}\over\partial y}-{\partial J_{2}\over\partial z},-\gamma{\partial J_{3}\over\partial x}-{\gamma v\over c^{2}}{\partial J_{3}\over\partial t},\gamma{\partial J_{2}\over\partial x}+{\gamma v\over c^{2}}{\partial J_{2}\over\partial t})$\\

and;\\

$\bigtriangledown'(\rho)=({\partial\rho\over \partial x'},{\partial\rho\over \partial y'},{\partial\rho\over \partial z'})$\\

$=(\gamma{\partial\rho\over \partial x}+{\gamma v\over c^{2}}{\partial\rho\over \partial t},{\partial\rho\over \partial y},{\partial\rho\over \partial z})$\\

$\gamma(\overline{v}\times\bigtriangledown'(\rho))=(0,-\gamma v{\partial\rho\over \partial z},\gamma v{\partial\rho\over \partial y})$\\

Combining these results, it follows that;\\

$\bigtriangledown'\times\overline{J}'=(0,\gamma{\partial J_{1}\over \partial z},-\gamma{\partial J_{1}\over \partial y})+(0,-\gamma v{\partial\rho\over \partial z},\gamma v{\partial\rho\over \partial y})$\\

$+({\partial J_{3}\over\partial y}-{\partial J_{2}\over\partial z},-\gamma{\partial J_{3}\over\partial x}-{\gamma v\over c^{2}}{\partial J_{3}\over\partial t},\gamma{\partial J_{2}\over\partial x}+{\gamma v\over c^{2}}{\partial J_{2}\over\partial t})$\\

$=({\partial J_{3}\over\partial y}-{\partial J_{2}\over\partial z},\gamma{\partial J_{1}\over \partial z}-\gamma{\partial J_{3}\over\partial x}-{\gamma v\over c^{2}}{\partial J_{3}\over\partial t}-\gamma v{\partial\rho\over \partial z},-\gamma{\partial J_{1}\over \partial y}+\gamma{\partial J_{2}\over\partial x}+{\gamma v\over c^{2}}{\partial J_{2}\over\partial t}+\gamma v{\partial\rho\over \partial y})$\\

As we have seen, $\bigtriangledown\times\overline{J}=\overline{0}$ in $S$. In coordinates, this implies that;\\

${\partial J_{3}\over\partial y}-{\partial J_{2}\over\partial z}={\partial J_{1}\over\partial z}-{\partial J_{3}\over\partial x}={\partial J_{2}\over\partial x}-{\partial J_{1}\over\partial y}=0$\\

It follows, using Lemma \ref{electricmagnetic}, that;\\

$\bigtriangledown'\times\overline{J}'=(0,-{\gamma v\over c^{2}}{\partial J_{3}\over\partial t}-\gamma v{\partial\rho\over \partial z},{\gamma v\over c^{2}}{\partial J_{2}\over\partial t}+\gamma v{\partial\rho\over \partial y})$\\

$=\gamma(\overline{v}\times (\bigtriangledown(\rho)+{1\over c^{2}}{\partial\overline{J}\over \partial t}))$\\

As $\overline{v}$ is arbitrary and $\bigtriangledown'\times\overline{J}'=\overline{0}$, we conclude that;\\

$(\bigtriangledown(\rho)+{1\over c^{2}}{\partial\overline{J}\over \partial t})=\overline{0}$, $(\dag)$\\

Taking the divergence $div$ of $(\dag)$ and using the continuity equation, we obtain that;\\

$div(\bigtriangledown(\rho))+{1\over c^{2}}{\partial div(\overline{J})\over \partial t}$\\

$=\bigtriangledown^{2}(\rho)-{1\over c^{2}}{\partial^{2}\rho\over \partial t^{2}}=0$\\

so that $\square^{2}(\rho)=0$. We can conclude as in Lemma \ref{waveequation}, using $(\dag)$, the continuity equation and the fact that $\bigtriangledown\times\overline{J}=\overline{0}$, that $\square^{2}(\overline{J})=\overline{0}$ as required.

\end{proof}

We make a further definition;\\

\begin{defn}
\label{nonradiating2}
Let $(\rho,\overline{J})$ be a charge distribution and current, satisfying the continuity equation in the rest frame $S$. Then, we say that $(\rho,\overline{J})$ is strongly non-radiating if in any inertial frame $S'$, with velocity vector $\overline{v}$, for the transformed current and charge $(\rho',\overline{J}')$, there exist electric and magnetic fields $(\overline{E}',\overline{B}')$ in $S'$ such that $(\rho',\overline{J}',\overline{E}',\overline{B}')$ satisfy Maxwell's equations in $S'$ and with $\overline{E}'=\overline{0}$. We say that $(\rho,\overline{J})$ is mixed non-radiating if in any inertial frame $S'$, with velocity vector $\overline{v}$, for the transformed current and charge $(\rho',\overline{J}')$, there exist electric and magnetic fields $(\overline{E}',\overline{B}')$ in $S'$ such that $(\rho',\overline{J}',\overline{E}',\overline{B}')$ satisfy Maxwell's equations in $S'$ and with either $\overline{E}'=\overline{0}$ or $\overline{B}'=\overline{0}$. We say that $(\rho,\overline{J})$ is Poynting non-radiating if in any inertial frame $S'$, with velocity vector $\overline{v}$, for the transformed current and charge $(\rho',\overline{J}')$, there exist electric and magnetic fields $(\overline{E}',\overline{B}')$ in $S'$ such that $(\rho',\overline{J}',\overline{E}',\overline{B}')$ satisfy Maxwell's equations in $S'$ and with the Poynting vector $\overline{E}'\times\overline{B}'=\overline{0}$. We say that $(\rho,\overline{J})$ is surface non-radiating if in any inertial frame $S'$, with velocity vector $\overline{v}$, for the transformed current and charge $(\rho',\overline{J}')$, there exist electric and magnetic fields $(\overline{E}',\overline{B}')$ in $S'$ such that $(\rho',\overline{J}',\overline{E}',\overline{B}')$ satisfy Maxwell's equations in $S'$ and with  $div(\overline{E}'\times\overline{B}')=0$.
\end{defn}

We note the following;\\

\begin{lemma}
\label{nononradiating2}
Let $(\rho,\overline{J})$ be strongly non-radiating, then $(\rho,\overline{J})$ is trivial, that is $\rho=0$ and $\overline{J}=\overline{0}$.
\end{lemma}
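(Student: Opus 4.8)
The plan is to extract from the strongly non-radiating hypothesis a single pointwise constraint that holds in \emph{every} inertial frame, using nothing more than Gauss's law, and then to feed this back through the charge transformation rule. The crucial observation is that if $(\rho',\overline{J}',\overline{E}',\overline{B}')$ satisfy Maxwell's equations in $S'$ with $\overline{E}'=\overline{0}$, then the first of Maxwell's equations, $div(\overline{E}')=\rho'/\epsilon_{0}$, immediately forces $\rho'=0$ throughout $S'$. Thus the mere \emph{existence} of fields with vanishing electric field, which the hypothesis guarantees in every inertial frame, already tells us that the transformed charge density $\rho'$ vanishes identically in each such frame.

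First I would apply this in the rest frame $S$ itself (the case $\overline{v}=\overline{0}$), which yields $\rho=0$. This disposes of the charge but says nothing about the current, since in the rest frame $\overline{J}$ does not appear in Gauss's law. The entire force of the statement therefore comes from the moving frames, where the relativistic mixing of charge and current allows the current to contribute to the charge density, and it is this mechanism that must be exploited to kill $\overline{J}$.

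Concretely, for a boost with velocity $\overline{v}=v\hat{\overline{v}}$, the transformation rule recorded in the proof of Lemma \ref{movingframes}, namely $\rho'=\gamma(\rho-vJ_{||}/c^{2})$ with $J_{||}=\overline{J}\cdot\hat{\overline{v}}$, combined with $\rho'=0$ and $\gamma\neq 0$, gives the pointwise identity $\rho=\frac{v}{c^{2}}(\overline{J}\cdot\hat{\overline{v}})$ on all of spacetime in $S$, valid for \emph{every} choice of speed $v\in(0,c)$ and direction $\hat{\overline{v}}$. Fixing one spacetime point and regarding $\rho$ and $\overline{J}$ there as fixed data, the right-hand side is linear in $v$ with zero intercept while the left-hand side is independent of $v$; letting $v$ range over an interval forces both $\rho=0$ and $\overline{J}\cdot\hat{\overline{v}}=0$. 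Letting $\hat{\overline{v}}$ range over all directions then gives $\overline{J}=\overline{0}$ at that point, and since the point was arbitrary, $\rho\equiv 0$ and $\overline{J}\equiv\overline{0}$, as required.

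I do not anticipate a serious analytic obstacle: once Gauss's law is invoked the argument becomes essentially algebraic. The only point requiring genuine care is the logical structure, namely the realisation that the rest frame alone is insufficient and that $\overline{J}=\overline{0}$ emerges only after quantifying over all boosts and using the way current transforms into charge. A minor technical remark to flag is that, for each fixed boost, the displayed identity is an equality of functions on spacetime, so that freezing a single spacetime point of $S$ and then varying the boost is legitimate.
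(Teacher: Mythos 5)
Your proposal is correct and takes essentially the same route as the paper's own proof: in each frame the hypothesis $\overline{E}'=\overline{0}$ combined with Gauss's law $div(\overline{E}')=\rho'/\epsilon_{0}$ forces $\rho'=0$, and the transformation rule $\rho'=\gamma(\rho-{vJ_{||}\over c^{2}})$, quantified over all boost directions (and speeds), then yields $\rho=0$ and $\overline{J}\centerdot\overline{v}=0$ for every $\overline{v}$, hence $\overline{J}=\overline{0}$. The only cosmetic difference is that the paper fixes $\rho=0$ from the rest frame before invoking the moving frames, whereas you additionally recover it by varying $v$ in the pointwise identity; the logic is identical.
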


\begin{proof}
In the rest frame $S$, we can find a pair $(\overline{E},\overline{B})$ such that $(\rho,\overline{J},\overline{E},\overline{B})$ satisfy Maxwell's equations and with $\overline{E}=\overline{0}$. By Maxwell's equations, we have that $div(\overline{E})={\rho\over\epsilon_{0}}=0$, so that $\rho=0$, $(*)$. In an inertial frame $S'$, with velocity vector $\overline{v}$, we have, by the transformation rules and $(*)$, that;\\

$\rho'=\gamma(\rho-{vJ_{||}\over c^{2}})=-{\gamma vJ_{||}\over c^{2}}$, $(**)$\\

Using the fact that we can find $(\overline{E}',\overline{B}')$ such that $(\rho',\overline{J}',\overline{E}',\overline{B}')$ satisfy Maxwell's equations and with $\overline{E}'=\overline{0}$, we can conclude again, that $\rho'=0$. By $(**)$, we then have that $J_{||}=0$, so that $\overline{J},\overline{v}=0$, for every velocity vector $\overline{v}$. This clearly implies that $\overline{J}=\overline{0}$ as required.

\end{proof}

\begin{lemma}
\label{nononradiating3}
Let $(\rho,\overline{J})$ be mixed non-radiating, but not non-radiating, then $(\rho,\overline{J})$ is trivial, that is $\rho=0$ and $\overline{J}=\overline{0}$.
\end{lemma}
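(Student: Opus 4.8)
The plan is to use the hypothesis ``not non-radiating'' to locate a single frame in which the electric field can be made to vanish, and then to exploit the rigidity of that condition. First I would observe that the notions ``non-radiating'', ``mixed non-radiating'' and ``trivial'' are all frame-independent, since each quantifies over the \emph{same} collection of inertial frames and the transformed four-current $(\rho',\overline{J}')$ depends only on the physical configuration, not on which frame is designated the rest frame (continuity being Lorentz invariant, the hypotheses of Definition \ref{nonradiating2} persist). Because $(\rho,\overline{J})$ is not non-radiating, there is a frame $S_0$ in which no solution has $\overline{B}'=\overline{0}$; since it is mixed non-radiating, in $S_0$ there must instead be a solution with $\overline{E}'=\overline{0}$. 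Treating $S_0$ as the rest frame $S$, Gauss's law gives $\rho=0$ exactly as in the opening of Lemma \ref{nononradiating2}, while Faraday's and Ampère's laws force $\partial_t\overline{B}=\overline{0}$ and $\overline{J}=\mu_0^{-1}(\bigtriangledown\times\overline{B})$, so $\overline{J}$ is static in $S$.

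Next I would set up a dichotomy from mixed non-radiating applied to an arbitrary frame $S'$ of velocity $\overline{v}$. Either $\overline{E}'=\overline{0}$ is achievable, in which case $\rho'=0$ and the transformation rule $\rho'=\gamma(\rho-\overline{v}\cdot\overline{J}/c^{2})=-\gamma\,\overline{v}\cdot\overline{J}/c^{2}$ yields $\overline{v}\cdot\overline{J}=0$; or $\overline{B}'=\overline{0}$ is achievable, in which case Lemma \ref{electricmagnetic} gives $\overline{0}=\square'^{2}(\overline{B}')=-\mu_0(\bigtriangledown'\times\overline{J}')$, so $\bigtriangledown'\times\overline{J}'=\overline{0}$. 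The key computation is to re-run the evaluation of $\bigtriangledown'\times\overline{J}'$ from Lemma \ref{wavenonradiating} under the two simplifications now available, $\rho=0$ and $\partial_t\overline{J}=\overline{0}$: every term carrying a $\partial_t J_i$ or a spatial derivative of $\rho$ drops out, leaving $\bigtriangledown'\times\overline{J}'=((\bigtriangledown\times\overline{J})_1,\gamma(\bigtriangledown\times\overline{J})_2,\gamma(\bigtriangledown\times\overline{J})_3)$. Since $\gamma\neq 0$, this vanishes if and only if $\bigtriangledown\times\overline{J}=\overline{0}$ in $S$, a condition which is frame-independent and in particular independent of $\overline{v}$.

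With the dichotomy in hand I would split on whether $\bigtriangledown\times\overline{J}=\overline{0}$. If $\bigtriangledown\times\overline{J}\neq\overline{0}$, then by the previous paragraph no frame can realise the second alternative, so mixed non-radiating forces $\overline{v}\cdot\overline{J}=0$ for every admissible $\overline{v}$; letting $\overline{v}$ range over an open set of directions gives $\overline{J}=\overline{0}$, contradicting $\bigtriangledown\times\overline{J}\neq\overline{0}$. Hence $\bigtriangledown\times\overline{J}=\overline{0}$. Combined with $\rho=0$, the continuity equation gives $div(\overline{J})=-\partial_t\rho=0$, so the identity $\bigtriangledown^{2}\overline{J}=\bigtriangledown(div(\overline{J}))-\bigtriangledown\times(\bigtriangledown\times\overline{J})=\overline{0}$ shows each component of $\overline{J}$ is harmonic. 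As $\overline{J}$ is smoothly decaying it vanishes at infinity, so Liouville's theorem (used exactly as in Lemma \ref{movingframes}) forces $\overline{J}=\overline{0}$, whence $\rho=0$; transforming back to the original rest frame, the vanishing four-current stays zero, so $(\rho,\overline{J})$ is trivial.

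The main obstacle I anticipate is bookkeeping rather than conceptual: justifying cleanly that the defined notions are frame-independent, so that $S_0$ may legitimately be treated as the rest frame, and carefully re-deriving the reduced form of $\bigtriangledown'\times\overline{J}'$ so that the crucial equivalence ``$\bigtriangledown'\times\overline{J}'=\overline{0}$ in some $S'$ $\iff$ $\bigtriangledown\times\overline{J}=\overline{0}$ in $S$'' becomes transparent. It is worth emphasising that the hypothesis ``not non-radiating'' is used precisely once, to produce the frame where $\overline{E}'=\overline{0}$ is achievable, and that it is essential: a genuinely non-radiating nontrivial system, satisfying the wave equations of Lemma \ref{waveequation}, would be mixed non-radiating yet far from trivial.
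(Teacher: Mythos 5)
Your proposal is correct and follows essentially the same route as the paper's own proof: take the frame where $\overline{E}'=\overline{0}$ is forced as the rest frame, deduce $\rho=0$ and that $\overline{J}$, $\overline{B}$ are static, re-run the curl computation of Lemma \ref{wavenonradiating} (with the $\gamma(\overline{v}\times\square^{2}(\overline{E}))$ term absent) to conclude $\bigtriangledown\times\overline{J}=\overline{0}$, and finish with Liouville's theorem. Your only departures are cosmetic: you replace the paper's ``without loss of generality'' (which tacitly invokes the strongly non-radiating case, Lemma \ref{nononradiating2}, when no frame admits $\overline{B}'=\overline{0}$) by an explicit dichotomy, and you apply Liouville directly to the harmonic field $\overline{J}$ (via $div(\overline{J})=0$, $\bigtriangledown\times\overline{J}=\overline{0}$) where the paper applies it to $\overline{B}$ and then recovers $\overline{J}={1\over\mu_{0}}\bigtriangledown\times\overline{B}=\overline{0}$.
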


\begin{proof}
Without loss of generality, using the result of Lemma \ref{nonradiating2}, we can assume that in the rest frame $S$, there exists a pair $(\overline{E},\overline{B})$ such that $(\rho,\overline{J},\overline{E},\overline{B})$ satisfy Maxwell's equations and with $\overline{E}=\overline{0}$, and that there exists an inertial frame $S'$, with velocity vector $\overline{v}$, such that, for the transformed charge and current $(\rho',\overline{J}')$, there exists a pair $(\overline{E}',\overline{B}')$ such that $(\rho',\overline{J}',\overline{E}',\overline{B}')$ satisfy Maxwell's equations and with $\overline{B'}=\overline{0}$. Working in the rest frame $S$, we have that, by Maxwell's equations, $div(\overline{E})={\rho\over\epsilon_{0}}$, so that $\rho=0$. As $\square^{2}(\overline{E})=\overline{0}$, we have by Lemma \ref{electricmagnetic}, that;\\

$\mu_{0}{\partial\overline{J}\over \partial t}={\bigtriangledown(\rho)\over\epsilon_{0}}+\mu_{0}{\partial\overline{J}\over \partial t}=\overline{0}$\\

so that $\overline{J}$ is static. Again by Maxwell's equations, we have that;\\

$\bigtriangledown\times\overline{E}=-{\partial\overline{B}\over \partial t}$\\

$\bigtriangledown\times\overline{B}=\mu_{0}\overline{J}+\mu_{0}\epsilon_{0}{\partial\overline{E}\over \partial t}$\\

so that, as $\overline{E}=\overline{0}$, $\overline{B}$ is static, and $\bigtriangledown\times\overline{B}=\mu_{0}\overline{J}$, $(*)$. Switching to the frame $S'$, using the fact that $\overline{B}'=\overline{0}$ and the second result of Lemma \ref{electricmagnetic}, we have that $\bigtriangledown'\times\overline{J}'=\overline{0}$. Repeating the calculation of Lemma \ref{wavenonradiating}, and using the fact that $\square^{2}(\overline{E})=\overline{0}$, we have that;\\

$\bigtriangledown'\times\overline{J}'=(\bigtriangledown\times\overline{J})_{||}+\gamma((\bigtriangledown\times\overline{J})_{\perp})+\gamma(\overline{v}\times\square^{2}(\overline{E}))$\\

$=(\bigtriangledown\times\overline{J})_{||}+\gamma((\bigtriangledown\times\overline{J})_{\perp})=\overline{0}$\\

It follows that;\\

$((\bigtriangledown\times\overline{J}),\overline{v})$\\

$=(\bigtriangledown\times\overline{J})_{||},\overline{v})$\\

$=-\gamma((\bigtriangledown\times\overline{J})_{\perp},\overline{v})=\overline{0}$\\

so that;\\

$(\bigtriangledown\times\overline{J})_{||}=-\gamma(\bigtriangledown\times\overline{J})_{\perp}=\overline{0}$\\

and;\\

$(\bigtriangledown\times\overline{J})=(\bigtriangledown\times\overline{J})_{||}+(\bigtriangledown\times\overline{J})_{\perp}=\overline{0}$\\

By the second result of Lemma \ref{electricmagnetic}, we obtain that $\square^{2}(\overline{B})=\overline{0}$, but $\overline{B}$ is static, so in fact $\bigtriangledown^{2}(\overline{B})=\overline{0}$. Applying Liouville's theorem, and using the fact that $\overline{B}$ is bounded and vanishing at infinity, we obtain that $\overline{B}=\overline{0}$. From $(*)$, we must have that $\overline{J}=\overline{0}$ as well, proving the claim.

\end{proof}

\begin{rmk}
\label{conjectures}
We conjecture that if $(\rho,\overline{J})$ are Poynting or surface non-radiating, but not non-radiating, then $(\rho,\overline{J})$ are trivial. Given these conjectures, if an electromagnetic system fails to satisfy the wave equation outline above, then in some inertial frame, without loss of generality, we would have that $div(\overline{E}\times\overline{B})>0$ on some open $U$. By the divergence theorem, would imply an energy flux through the boundary of $U$. This imposes strong restrictions on the nature of this flux, as if the total energy $V$ of the system were to reduce to zero, or even decrease then, we can consider Rutherford's observation, that, in an atomic system, the orbiting electrons would spiral into the nucleus.\\

\end{rmk}
\end{section}
\begin{section}{The Balmer Series}
We now consider flows satisfying the wave equation.

\begin{lemma}
\label{waveequationlinks}
Let $(\rho,\overline{J})$ be a pair, satisfying the continuity equation, with $\rho\in S(\mathcal{R}^{3},\mathcal{R})$, $\overline{J}\in S((\mathcal{R}^{3},\mathcal{R}^{3}))$ and the wave equations $\square^{2}(\rho)=0$ and $\square^{2}(\overline{J})=\overline{0}$, with the additional equation;\\

$\bigtriangledown(\rho)+{1\over c^{2}}{\partial\overline{J}\over \partial t}=\overline{0}$ $(*)$\\

Then if;\\

$\rho(\overline{x},t)=\int_{\mathcal{R}^{3}}f(\overline{k})e^{i(\overline{k}\centerdot\overline{x}-\omega(\overline{k})t)}d\overline{k}+\int_{\mathcal{R}^{3}}g(\overline{k})e^{i(\overline{k}\centerdot\overline{x}+\omega(\overline{k})t)}d\overline{k}$\\

$\overline{J}(\overline{x},t)=\int_{\mathcal{R}^{3}}\overline{F}(\overline{k})e^{i(\overline{k}\centerdot\overline{x}-\omega(\overline{k})t)}d\overline{k}+\int_{\mathcal{R}^{3}}\overline{G}(\overline{k})e^{i(\overline{k}\centerdot\overline{x}+\omega(\overline{k})t)}d\overline{k}$\\

we have that, for $\overline{k}\neq\overline{0}$ ;\\

$\overline{F}(\overline{k})={cf(\overline{k})\overline{k}\over |\overline{k}|}$, $f(\overline{k})={(\overline{k},F(\overline{k}))\over c|\overline{k}|}$\\

$\overline{G}(\overline{k})=-{cg(\overline{k})\overline{k}\over |\overline{k}|}$, $g(\overline{k})=-{(\overline{k},G(\overline{k}))\over c|\overline{k}|}$\\

If $\overline{J}$ is tangential, that is for $\overline{x}\neq\overline{0}$, and $t\in\mathcal{R}_{\geq 0}$,  $(\overline{x},\overline{J}(\overline{x},t))=0$, then the pair $(\rho,\overline{J})$ is trivial, that is $\rho=0$ and $\overline{J}=\overline{0}$.\\

\end{lemma}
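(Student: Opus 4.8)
The plan is to extract from the first part of the lemma the fact that $\overline{J}$ is \emph{irrotational}, and then to show that an irrotational Schwartz field which is everywhere tangential to the radial direction must vanish identically. The relations $\overline{F}(\overline{k})=\frac{cf(\overline{k})}{|\overline{k}|}\overline{k}$ and $\overline{G}(\overline{k})=-\frac{cg(\overline{k})}{|\overline{k}|}\overline{k}$ already established above say that every Fourier mode of $\overline{J}$ is parallel to $\overline{k}$, so $i\overline{k}\times\overline{F}(\overline{k})=i\overline{k}\times\overline{G}(\overline{k})=\overline{0}$, and differentiating the two Fourier integrals gives $\bigtriangledown\times\overline{J}=\overline{0}$ for each fixed $t$. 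Since $\mathcal{R}^{3}$ is simply connected and $\overline{J}(\cdot,t)$ is smooth, I would write $\overline{J}=\bigtriangledown\phi$ for a scalar potential $\phi(\overline{x},t)$, smooth in $\overline{x}$ on all of $\mathcal{R}^{3}$ (one may even read $\phi$ off directly from the Fourier integral using $\overline{k}\,e^{i\overline{k}\centerdot\overline{x}}=-i\bigtriangledown_{x}e^{i\overline{k}\centerdot\overline{x}}$).

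Next I would feed in the tangential hypothesis. Since $\overline{x}\centerdot\bigtriangledown\phi=r\,\partial_{r}\phi$ with $r=|\overline{x}|$, the condition $(\overline{x},\overline{J}(\overline{x},t))=0$ for all $\overline{x}\neq\overline{0}$ becomes $\partial_{r}\phi=0$ on $r>0$. Thus $\phi$ is constant along each ray from the origin and depends only on the direction, say $\phi(\overline{x},t)=\psi(\overline{x}/|\overline{x}|,t)$, with $\psi$ smooth on the unit sphere for each $t$.

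The decisive step, and the one I expect to be the main obstacle, is the decay/regularity argument. Differentiating a radially constant function gives $\overline{J}=\bigtriangledown\phi=\frac{1}{r}\bigtriangledown_{S^{2}}\psi$, so its magnitude is exactly $|\overline{J}(\overline{x},t)|=\frac{1}{|\overline{x}|}\,|\bigtriangledown_{S^{2}}\psi(\hat{x},t)|$, namely a fixed function of direction scaled by $1/r$. But $\overline{J}\in S(\mathcal{R}^{3},\mathcal{R}^{3})$ is in particular bounded near the origin: fixing a direction $\hat{x}$ and letting $r\to0^{+}$ forces $|\bigtriangledown_{S^{2}}\psi(\hat{x},t)|=0$, since otherwise $|\overline{J}|$ would blow up like $1/r$. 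As this holds for every direction, $\bigtriangledown_{S^{2}}\psi\equiv\overline{0}$, hence $\psi$ is spatially constant and $\overline{J}=\bigtriangledown\phi=\overline{0}$. The same conclusion also follows from the rapid decay at infinity, as $1/r$ is not of Schwartz type; this is precisely the point where the smoothly decaying hypothesis is indispensable, and I would stress it accordingly.

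Finally I would recover $\rho$. With $\overline{J}=\overline{0}$, the relation $(*)$ gives $\bigtriangledown(\rho)=-\frac{1}{c^{2}}\frac{\partial\overline{J}}{\partial t}=\overline{0}$, so $\rho(\cdot,t)$ is spatially constant; being a Schwartz function, it must vanish, whence $\rho=0$. Equivalently, $\overline{J}=\overline{0}$ forces $\overline{F}=\overline{G}=\overline{0}$ by uniqueness of the Fourier representation, and then the identities $f(\overline{k})=\frac{(\overline{k},F(\overline{k}))}{c|\overline{k}|}=0$ and $g(\overline{k})=-\frac{(\overline{k},G(\overline{k}))}{c|\overline{k}|}=0$ give $\rho=0$ directly. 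Either route yields the triviality of the pair $(\rho,\overline{J})$.
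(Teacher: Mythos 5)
Your treatment of the tangential part is correct, but it follows a genuinely different route from the paper's. You work in physical space: the relations $\overline{F}(\overline{k})\parallel\overline{k}$ and $\overline{G}(\overline{k})\parallel\overline{k}$ give $\bigtriangledown\times\overline{J}=\overline{0}$, you introduce a potential with $\overline{J}=\bigtriangledown(\phi)$, tangentiality gives ${\partial \phi\over \partial r}=0$ on $r>0$, hence $\overline{J}={1\over r}\bigtriangledown_{S^{2}}\psi$, and regularity of $\overline{J}$ at the origin (or, as you note, Schwartz decay at infinity, since $1/r$ decay is not rapid) kills the angular gradient. The paper instead stays entirely in $\overline{k}$-space: the condition $(\overline{x},\overline{J})=0$ is Fourier-transformed into $div(\mathcal{F}(\overline{J}))=0$ in the $\overline{k}$ variables, and substituting $\overline{F}(\overline{k})={cf(\overline{k})\overline{k}\over |\overline{k}|}$ yields $(\bigtriangledown(f),\overline{k})=-2f$, an Euler-type homogeneity equation forcing $f=0$, then $g=0$ and $\overline{F}=\overline{G}=\overline{0}$. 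Your version arguably buys something here: the paper's coordinate-by-coordinate reading of the Euler equation (asserting each partial derivative separately equals $-2f$) is garbled as written, whereas the honest argument is that a function homogeneous of degree $-2$ away from the origin cannot be Schwartz unless it vanishes --- precisely the regularity-versus-$1/r$ tension your $x$-space argument exploits directly and cleanly. Your construction can even be streamlined: for a curl-free field the Poincar\'e-lemma potential $\phi(\overline{x},t)=\int_{0}^{1}(\overline{J}(s\overline{x},t),\overline{x})\,ds$ is \emph{identically zero} by tangentiality, giving $\overline{J}=\overline{0}$ in one line. Two small points. First, you quote the coefficient relations as ``already established above,'' but they are part of the lemma's conclusion, not a hypothesis: you still owe the (mechanical) derivation, which the paper supplies by equating Fourier coefficients in $(*)$ and in the continuity equation via the inversion theorem; with that supplied your argument is complete. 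Second, when reading $\phi$ off the Fourier integral via $\overline{k}e^{i\overline{k}\centerdot\overline{x}}=-i\bigtriangledown e^{i\overline{k}\centerdot\overline{x}}$, one should note that $f(\overline{k})/|\overline{k}|$ remains integrable near $\overline{k}=\overline{0}$ in dimension three, so the formula is legitimate.
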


\begin{proof}
By the first part of Lemma \ref{solution}, using the fact that $\rho$ and $\overline{J}$ satisfy the wave equation, we can write;\\

$\rho(\overline{x},t)=\int_{\mathcal{R}^{3}}f(\overline{k})e^{i(\overline{k}\centerdot\overline{x}-\omega(\overline{k})t)}d\overline{k}+\int_{\mathcal{R}^{3}}g(\overline{k})e^{i(\overline{k}\centerdot\overline{x}+\omega(\overline{k})t)}d\overline{k}$\\

$\overline{J}(\overline{x},t)=\int_{\mathcal{R}^{3}}\overline{F}(\overline{k})e^{i(\overline{k}\centerdot\overline{x}-\omega(\overline{k})t)}d\overline{k}+\int_{\mathcal{R}^{3}}\overline{G}(\overline{k})e^{i(\overline{k}\centerdot\overline{x}+\omega(\overline{k})t)}d\overline{k}$\\

where $f,g\subset S(\mathcal{R}^{3},\mathcal{R})$, $F,G\subset S(\mathcal{R}^{3},\mathcal{R})$ and $\omega(\overline{k})=c|\overline{k}|$\\

We have that;\\

$\bigtriangledown(\rho)(\overline{x},t)=\int_{\mathcal{R}^{3}}f(\overline{k})i\overline{k}e^{i(\overline{k}\centerdot\overline{x}-\omega(\overline{k})t)}d\overline{k}+\int_{\mathcal{R}^{3}}g(\overline{k})i\overline{k}e^{i(\overline{k}\centerdot\overline{x}+\omega(\overline{k})t)}d\overline{k}$\\

while;\\

${\partial\overline{J}\over \partial t}(\overline{x},t)=\int_{\mathcal{R}^{3}}-i\omega(\overline{k})\overline{F}(\overline{k})e^{i(\overline{k}\centerdot\overline{x}-\omega(\overline{k})t)}d\overline{k}+\int_{\mathcal{R}^{3}}i\omega(\overline{k})\overline{G}(\overline{k})e^{i(\overline{k}\centerdot\overline{x}+\omega(\overline{k})t)}d\overline{k}$\\

so that, equating coefficients, using the Inversion Theorem, and $(*)$, we have that;\\

$f(\overline{k})i\overline{k}-{i\over c^{2}}\omega(\overline{k})\overline{F}(\overline{k})=\overline{0}$\\

$g(\overline{k})i\overline{k}+{i\over c^{2}}\omega(\overline{k})\overline{G}(\overline{k})=\overline{0}$\\

$\overline{F}(\overline{k})={cf(\overline{k})\overline{k}\over |\overline{k}|}$, $(\overline{k}\neq\overline{0})$\\

$\overline{G}(\overline{k})=-{cg(\overline{k})\overline{k}\over |\overline{k}|}$, $(\overline{k}\neq\overline{0})$ $(**)$\\

We have that;\\

${\partial\rho\over \partial t}(\overline{x},t)=\int_{\mathcal{R}^{3}}-i\omega(\overline{k})f(\overline{k})e^{i(\overline{k}\centerdot\overline{x}-\omega(\overline{k})t)}d\overline{k}+\int_{\mathcal{R}^{3}}i\omega(\overline{k})g(\overline{k})e^{i(\overline{k}\centerdot\overline{x}+\omega(\overline{k})t)}d\overline{k}$\\

and;\\

$div(\overline{J})(\overline{x},t)=\int_{\mathcal{R}^{3}}i(\overline{k},\overline{F}(\overline{k}))e^{i(\overline{k}\centerdot\overline{x}-\omega(\overline{k})t)}d\overline{k}+\int_{\mathcal{R}^{3}}i(\overline{k},\overline{G}(\overline{k}))e^{i(\overline{k}\centerdot\overline{x}+\omega(\overline{k})t)}d\overline{k}$\\

so that, equating coefficients again, and using the continuity equation ${\partial\rho\over \partial t}+div(\overline{J})=0$, we have;\\

$-i\omega(\overline{k})f(\overline{k})+i(\overline{k},F(\overline{k}))=0$\\

$i\omega(\overline{k})g(\overline{k})+i(\overline{k},G(\overline{k}))=0$\\

$f(\overline{k})={(\overline{k},F(\overline{k}))\over c|\overline{k}|}$, $(\overline{k}\neq\overline{0})$\\

$g(\overline{k})=-{(\overline{k},G(\overline{k}))\over c|\overline{k}|}$, $(\overline{k}\neq\overline{0})$ $(***)$\\

Now suppose that $\overline{J}$ is tangential. We then have, applying the fourier transform $\mathcal{F}$, see \cite{E};\\

$\mathcal{F}(x_{1}J_{1}+x_{2}J_{2}+x_{3}J_{3})$\\

$=-i({\partial\mathcal{F}(J_{1})\over \partial k_{1}}+{\partial\mathcal{F}(J_{2})\over \partial k_{2}}+{\partial\mathcal{F}(J_{3})\over \partial k_{3}})$\\

$=-i(div(\mathcal{F}(\overline{J})(\overline{k},t)))=0$\\

so that $div(\mathcal{F}(\overline{J})(\overline{k},t))=0$ which implies, equating coefficients, that $div(\overline{F})(\overline{k})=div(\overline{G})(\overline{k})=0$. Using the formula $(**)$, we have;\\

$div(\overline{F})(\overline{k})=div({cf(\overline{k})\overline{k}\over |\overline{k}|})$\\

$=c(\bigtriangledown(f)(\overline{k}),{\overline{k}\over |\overline{k}|})+cf(\overline{k})div({\overline{k}\over |\overline{k}|})$\\

$=c(\bigtriangledown(f)(\overline{k}),{\overline{k}\over |\overline{k}|})+cf(\overline{k}){2\over |\overline{k}|}=0$\\

so that;\\

$=(\bigtriangledown(f),\overline{k})=-2f$, for $\overline{k}\neq \overline{0}$\\

In coordinates, this would imply that;\\

${\partial f\over \partial k_{1}}={\partial f\over \partial k_{2}}={\partial f\over \partial k_{3}}={\partial f\over \partial k_{1}}+{\partial f\over \partial k_{2}}+{\partial f\over \partial k_{3}}=-2f$\\

so that $-6f=-2f$ and $f=0$. Similarly, we conclude that $g=0$, and, using the equations $(**)$, that $\overline{F}=\overline{G}=\overline{0}$. This implies that $\rho=0$ and $\overline{J}=\overline{0}$ as required.

\end{proof}
\begin{lemma}
\label{eigenvalues}
We can find a pair $(\rho,\overline{J})$ satisfying the hypotheses of Lemma \ref{waveequationlinks}, with the additional requirement that $\overline{J}|_{S(r_{0})}=\overline{0}$.

\end{lemma}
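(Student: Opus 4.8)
The plan is to reduce all of the hypotheses of Lemma \ref{waveequationlinks} to a single scalar wave equation and then to force the boundary behaviour by separation of variables. Taking the curl of $(*)$ gives ${\partial\over\partial t}(\bigtriangledown\times\overline{J})=\overline{0}$, and since $\overline{J}$ is Schwartz with $\square^{2}(\overline{J})=\overline{0}$, the field $\bigtriangledown\times\overline{J}$ is a time-independent Schwartz solution of the wave equation, hence harmonic and vanishing at infinity, so $\bigtriangledown\times\overline{J}=\overline{0}$ by Liouville's theorem. On $\mathcal{R}^{3}$ we may therefore write $\overline{J}=\bigtriangledown\Phi$, and setting $\rho=-{1\over c^{2}}{\partial\Phi\over\partial t}$ one checks that the continuity equation is exactly $\square^{2}(\Phi)=0$, after which $(*)$, $\square^{2}(\rho)=0$ and $\square^{2}(\overline{J})=\overline{0}$ all follow since $\square^{2}$ commutes with $\bigtriangledown$ and with ${\partial\over\partial t}$. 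It thus suffices to exhibit a single non-zero Schwartz $\Phi$ with $\square^{2}(\Phi)=0$ and $\bigtriangledown\Phi|_{S(r_{0})}=\overline{0}$, the latter being precisely the requirement $\overline{J}|_{S(r_{0})}=\overline{0}$.

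Next I would separate variables in spherical coordinates. The solutions of $\square^{2}(\Phi)=0$ regular at the origin are spanned by the modes $j_{\ell}(kr)Y_{\ell}^{m}(\theta,\phi)e^{\pm ickt}$, with $j_{\ell}$ the spherical Bessel function of order $\ell$ and $Y_{\ell}^{m}$ the spherical harmonics. On $S(r_{0})$ the gradient of such a mode decomposes into a radial part proportional to $kj_{\ell}'(kr_{0})Y_{\ell}^{m}$ and a tangential part proportional to ${j_{\ell}(kr_{0})\over r_{0}}\bigtriangledown_{S^{2}}Y_{\ell}^{m}$. Because distinct frequencies $ck$ are independent in $t$, and the $Y_{\ell}^{m}$ and their surface gradients are independent on $S^{2}$, the vanishing of $\bigtriangledown\Phi$ on $S(r_{0})$ must be imposed mode by mode: the tangential part forces $j_{\ell}(kr_{0})=0$ and the radial part forces $j_{\ell}'(kr_{0})=0$. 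Either way $k$ is quantised by the zeros of a Bessel function, which is the advertised discreteness and the point of contact with Lemma \ref{sphereorthogonal}; note that the two conditions can be satisfied simultaneously only for the spherically symmetric modes $\ell=0$, where the tangential part is absent and the single Neumann condition $j_{0}'(kr_{0})=0$ remains.

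I would then assemble $(\rho,\overline{J})$ by superposing admissible modes, pairing the coefficients of $e^{+ickt}$ and $e^{-ickt}$ conjugately so that $\rho$ and $\overline{J}$ are real. Every hypothesis of Lemma \ref{waveequationlinks}, including the radial transform relations, is then read off directly from the reduction $\overline{J}=\bigtriangledown\Phi$, $\rho=-c^{-2}\partial_{t}\Phi$ of the first step, so no separate verification of those identities is needed.

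The hard part will be reconciling the global Schwartz hypothesis with the boundary condition holding for \emph{all} $t$. A single mode decays only like $r^{-1}$; a discrete superposition over the quantised $k_{n}$ behaves like a Fourier--Bessel series and so is essentially periodic in $r$ rather than rapidly decreasing; and a continuous superposition in $k$ with Schwartz weight destroys the pointwise vanishing on $S(r_{0})$, since the admissible $k$ form a set of measure zero. The resolution I would pursue is to interpret the relevant function space as fields on the ball $B(r_{0})$ that are smooth and vanish, with the appropriate derivatives, at $S(r_{0})$ --- exactly the space for which the modes above form the orthogonal basis of Lemma \ref{sphereorthogonal} --- so that any finite combination of eigenmodes already furnishes a non-trivial admissible pair. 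Making this identification precise, and confirming its compatibility with the Fourier-analytic hypotheses of Lemma \ref{waveequationlinks}, is the crux of the argument.
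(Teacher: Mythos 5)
Your proposal is correct in outline but reaches the construction by a genuinely different route than the paper. You reduce everything to a scalar potential: taking the curl of $(*)$ and applying Liouville's theorem gives $\bigtriangledown\times\overline{J}=\overline{0}$, you set $\overline{J}=\bigtriangledown\Phi$ and $\rho=-{1\over c^{2}}{\partial\Phi\over\partial t}$ so that the continuity equation becomes $\square^{2}(\Phi)=0$, and you then impose vanishing of the \emph{full gradient} on $S(r_{0})$, which by your (correct) mode-by-mode independence argument kills every $\ell\geq 1$ mode --- since $j_{\ell}(kr_{0})=j_{\ell}'(kr_{0})=0$ at $kr_{0}>0$ would contradict uniqueness for the second-order radial ODE --- leaving only spherically symmetric modes with the Neumann quantisation $j_{0}'(kr_{0})=0$. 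The paper never writes $\overline{J}$ as a gradient: it expands $\overline{J}$ as constant vector coefficients $\overline{U}(l,m,k)$, $\overline{V}(l,m,k)$ multiplying the scalar eigenfunctions $\gamma_{l,m,k}=Y_{l,m}\tau_{l,k}$, secures the hypotheses of Lemma \ref{waveequationlinks} in Fourier space through the radial transform condition $\overline{F}(\overline{k})=\alpha(k)\overline{k}$ (matching coefficients via the plane-wave expansion of $e^{i\overline{k}\centerdot\overline{x}}$ in spherical harmonics and the expansion of $\overline{k}$ with coefficients $\overline{W}(l,m)$), and then imposes the boundary condition as a \emph{Dirichlet} condition $j_{l}(kr_{0})=0$ on the scalar modes, retaining the whole $(l,m,k)$ family with $k\in{S_{l}\over r_{0}}$. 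Your route buys economy --- one scalar wave equation, no coefficient matching --- at the cost of collapsing the solution family to $\ell=0$ Neumann modes: this suffices for the bare existence statement, but it produces different quantised frequencies and does not supply the Dirichlet zero sets $S_{l}$, the coefficients $\overline{W}(l,m)$, or the full mode family that Lemmas \ref{sphereorthogonal} and \ref{balmer} consume downstream. Finally, the ``crux'' you flag --- that a discrete superposition over quantised $k$ cannot literally satisfy the Schwartz-class hypothesis of Lemma \ref{waveequationlinks} --- is precisely the gap present in the paper's own proof, which passes from the continuous representation $(\dag\dag\dag\dag)$ to the discrete sum $(\dag\dag\dag\dag\dag)$ by inserting the nonstandard infinite factor $\sqrt{\eta}$ in $(\sharp)$ and openly defers the justification to Remark \ref{alternating}; so on this point your proposal is no less complete than the paper, and your suggestion to reinterpret the problem in the ball geometry of Lemma \ref{sphereorthogonal} is a reasonable alternative regularisation to the paper's appeal to nonstandard analysis or distributions.
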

\begin{proof}
We convert to spherical polar coordinates, $x=rcos(\phi)sin(\theta)$, $y=rsin(\phi)sin(\theta)$, $z=rcos(\theta)$, for $0\leq\theta\leq\pi$, $-\pi\leq\phi\leq\pi$, writing the Laplacian;\\

$\bigtriangledown^{2}(u)=(R_{r}+{1\over r^{2}}A_{\theta,\phi})(u)$\\

where;\\

$R_{r}(u)={\partial^{2}u\over\partial r^{2}}+{2\over r}{\partial u\over \partial r}$\\

$A_{\theta,\phi}(u)={1\over\sin(\theta)}{\partial\over\partial\theta}(sin(\theta){\partial u\over\partial\theta})+{1\over sin^{2}\theta}{\partial^{2}u\over\partial\phi^{2}}$\\

are the radial and angular components respectively. The eigenvectors of the operator $A_{\theta,\phi}$ are the spherical harmonics, defined by;\\

$Y_{l,m}(\theta,\phi)=(-1)^{m}({2l+1\over 4\pi}{(l-m)!\over (l+m)!})^{1\over 2}P_{l,m}(cos(\theta))e^{im\phi}$, $(l\geq 0,0\leq m\leq l)$\\

$\overline{Y_{l,m}}(\theta,\phi)=(-1)^{m}Y_{l,-m}$, $(l\geq 0,0\leq m\leq l)$\\

where;\\

$P_{l,m}(x)=(1-x^{2})^{m\over 2}{d^{m}\over dx^{m}}(P_{l}(x))$, $(l\geq 0,0\leq m\leq l)$\\

$P_{l}(x)={1\over 2^{l}l!}{d^{l}\over dx^{l}}((x^{2}-1)^{l})$, $(l\geq 0)$\\

see the appendix of \cite{M}. We have that $\{Y_{l,m}:l\geq 0,-l\leq m\leq l\}$ forms a complete orthonormal basis of $L^{2}(S(1))$, and, moreover;\\

$A_{\theta,\phi}(Y_{l,m})=-l(l+1)Y_{l,m}$, $(l\geq 0,-l\leq m\leq l)$\\

see the appendix of \cite{M} again. We look for eigenvectors of $\bigtriangledown^{2}$ of the form $\psi_{l,m,E}(r,\theta,\phi)=Y_{l,m}(\theta,\phi)\chi_{l,E}(r)$. We have, using $(\dag)$, that;\\

$\bigtriangledown^{2}(\psi_{l,m,E})=(R_{r}+{1\over r^{2}}A_{\theta,\phi})(Y_{l,m}(\theta,\phi)\chi_{l,E}(r))$\\

$=Y_{l,m}(\theta,\phi)R_{r}(\chi_{l,E}(r))-{l(l+1)\over r^{2}}Y_{l,m}(\theta,\phi)\chi_{l,E}(r))$\\

so that;\\

$\bigtriangledown^{2}(\psi_{l,m,E})=E\psi_{l,m,E}$\\

iff $\chi_{l,E}(r)$ satisfies the radial equation;\\

$(R_{r}-{l(l+1)\over r^{2}}-E)\chi_{l,E}(r)=0$\\

$({d^{2}\over d r^{2}}+{2\over r}{d\over dr}-{l(l+1)\over r^{2}}-E)\chi_{l,E}(r)=0$, $(\dag\dag)$\\

We can solve $(\dag\dag)$ using the method of Frobenius, see \cite{BP}, but the solutions are only bounded for $E<0$. Explicitly, taking $E=-k^{2}$, with $k>0$, and making the change of variables $s=kr$, the radial equation reduces to the spherical Bessel equation;\\

$({d^{2}\over ds^{2}}+{2\over s}{d\over ds}+1-{l(l+1)\over s^{2}})\chi_{l,E}(s)=0$, $(\dag\dag\dag)$\\

which, as noted in \cite{M} has a unique bounded solution (up to scalar multiplication) on $(0,\infty)$ defined by;\\

$j_{l}(s)=({\pi\over 2s})^{1\over 2}J_{l+{1\over 2}}(s)$\\

where $J_{l+{1\over 2}}(s)$ denotes the ordinary (of the first kind) Bessel function of order $l+{1\over 2}$. As is shown in \cite{M} again, see also \cite{L}, the functions;\\

$k({2\over \pi})^{1\over 2}Y_{l,m}(\theta,\phi)j_{l}(kr)$ $(k\in (0,\infty))$\\

form a complete orthonormal set in $C(R^{3})$. Moreover, we have the explicit representations;\\

$J_{1\over 2}(s)=({2\over \pi s})^{1\over 2}sin(s)$\\

$J_{l+{1\over 2}}(s)=({2\over \pi s})^{1\over 2}(P_{l}({1\over s})sin(s)-Q_{l-1}({1\over s})cos(s))$, $(l\in\mathcal{Z}_{\geq 1})$\\

where $\{P_{l},Q_{l}\}\subset \mathcal{Q}[x]$ are polynomials of degree $l$, with the property that $P_{l}(-1)=(-1)^{l}P_{l}(1)$ and $Q_{l}(-1)=(-1)^{l}Q_{l}(1)$, for $l\geq 0$, see \cite{JD}.\\

We set;\\

$\chi_{l,E}(r)=\tau_{l,k}(r)=k({2\over \pi})^{1\over 2}j_{l}(kr)$\\

$\gamma_{l,m,k}(r,\theta,\phi)=Y_{l,m}(\theta,\phi)\tau_{l,k}(r)$\\

where $E=-k^{2}$ for $k>0$. By what has been shown $\{\gamma_{l,m,k}:k\in (0,\infty),l\geq 0,-l\leq m\leq l\}$ forms a complete orthonormal set, $(*)$, and $\bigtriangledown^{2}(\gamma_{l,m,k})=-k^{2}\gamma_{l,m,k}$. It follows easily, that we can write a general solution for the charge $\rho$ and current contributions $\overline{J}$ in the wave equation using the forms;\\

$\rho=\sum_{l\geq 0}\sum_{-l\leq m\leq l}\int_{k>0}(u(l,m,k)\gamma_{l,m,k}e^{-ickt}+v(l,m,k)\gamma_{l,m,k}e^{ickt})dk$\\

$\overline{J}=\sum_{l\geq 0}\sum_{-l\leq m\leq l}\int_{k>0}(\overline{U}(l,m,k)\gamma_{l,m,k}e^{-ickt}+\overline{V}(l,m,k)\gamma_{l,m,k}e^{ickt})dk$, $(\dag\dag\dag\dag)$\\

We say that $\overline{J}$ satisfies the radial transform condition, if, in the notation of Lemma \ref{waveequationlinks}, we have that, for $\overline{k}\neq\overline{0}$;\\

$\overline{F}(\overline{k})=\alpha(k)\overline{k}$\\

$\overline{G}(\overline{k})=\beta(k)\overline{k}$\\

for some $\{\alpha,\beta\}\subset S(\mathcal{R}_{>0})$. As is easily shown, if $\overline{J}$ satisfies the radial transform condition, then if we define $\rho$ according to the second pair of equations in Lemma \ref{waveequationlinks}, we automatically have that $\overline{J}$ satisfies the first pair, and all the assumptions of Lemma \ref{waveequationlinks} are met. By considering the representation of $\overline{J}$ in Lemma \ref{waveequationlinks}, equating coefficients, and applying the inversion theorem, we see that;\\

$2ickF(\overline{k})=ick\int_{\overline{R}^{3}}\overline{J}_{0}e^{-i\overline{k}\centerdot\overline{x}}d\overline{x}-\int_{\overline{R}^{3}}({\partial\overline{J}\over \partial t})_{0}e^{-i\overline{k}\centerdot\overline{x}}d\overline{x}$\\

$2ickG(\overline{k})=ick\int_{\overline{R}^{3}}\overline{J}_{0}e^{-i\overline{k}\centerdot\overline{x}}d\overline{x}+\int_{\overline{R}^{3}}({\partial\overline{J}\over \partial t})_{0}e^{-i\overline{k}\centerdot\overline{x}}d\overline{x}$ $(**)$\\

for $\overline{k}\neq\overline{0}$. We compute these integrals using the representation of $\overline{J}$ in $(\dag\dag\dag\dag)$ and the representation;\\

$e^{i\overline{k}\centerdot\overline{x}}=4\pi\sum_{l=0}^{\infty}\sum_{m=-l}^{l}i^{l}j_{l}(kx)Y_{l,m}(\hat{\overline{k}})Y_{l,m}(\hat{\overline{x}})$\\

$=4\pi\sum_{l=0}^{\infty}\sum_{m=-l}^{l}{i^{l}\over k}({\pi\over 2})^{1\over 2}Y_{l,m}(\hat{\overline{k}})\gamma_{l,m,k}$\\

given in \cite{M}, where $k=|\overline{k}|$. We have, using the property $(*)$, that;\\

$\int_{\overline{R}^{3}}\overline{J}_{0}e^{-i\overline{k}\centerdot\overline{x}}d\overline{x}$\\

$=\int_{\overline{R}^{3}}(\sum_{l\geq 0}\sum_{-l\leq m\leq l}\int_{k>0}(\overline{U}(l,m,k)\gamma_{l,m,k}+\overline{V}(l,m,k)\gamma_{l,m,k})dk)e^{-i\overline{k}\centerdot\overline{x}}d\overline{x}$\\

$=4\pi\int_{\overline{R}^{3}}(\sum_{l\geq 0}\sum_{-l\leq m\leq l}\int_{k>0}(\overline{U}(l,m,k)\gamma_{l,m,k}+\overline{V}(l,m,k)\gamma_{l,m,k})dk)$\\

$(\sum_{l=0}^{\infty}\sum_{m=-l}^{l}{i^{-l}\over k}({\pi\over 2})^{1\over 2}Y_{l,m}^{*}(\hat{\overline{k}})\gamma_{l,m,k}^{*})d\overline{x}$\\

$=4\pi\sum_{l=0}^{\infty}\sum_{m=-l}^{l}(\overline{U}(l,m,k)+\overline{V}(l,m,k)){i^{-l}\over k}({\pi\over 2})^{1\over 2}Y_{l,m}^{*}(\hat{\overline{k}})$\\

A similar calculation shows that;\\

$\int_{\overline{R}^{3}}({\partial\overline{J}\over \partial t})_{0}e^{-i\overline{k}\centerdot\overline{x}}d\overline{x}$\\

$=4\pi\sum_{l=0}^{\infty}\sum_{m=-l}^{l}(-ick\overline{U}(l,m,k)+ick\overline{V}(l,m,k)){i^{-l}\over k}({\pi\over 2})^{1\over 2}Y_{l,m}^{*}(\hat{\overline{k}})$\\

It follows from $(**)$ that;\\

$\overline{F}(\overline{k})=4\pi\sum_{l=0}^{\infty}\sum_{m=-l}^{l}\overline{U}(l,m,k){i^{-l}\over k}({\pi\over 2})^{1\over 2}Y_{l,m}^{*}(\hat{\overline{k}})$\\

$\overline{G}(\overline{k})=4\pi\sum_{l=0}^{\infty}\sum_{m=-l}^{l}\overline{V}(l,m,k){i^{-l}\over k}({\pi\over 2})^{1\over 2}Y_{l,m}^{*}(\hat{\overline{k}})$ $(++)$\\

We can compute $\overline{k}$ in spherical harmonics by;\\

$\overline{k}=\overline{k}^{*}=k(cos(\phi)sin(\theta),sin(\phi)sin(\theta),cos(\theta))$\\

$=\sum_{l=0}^{\infty}\sum_{m=-l}^{l}k\overline{W}(l,m)^{*}Y_{l,m}^{*}(\hat{\overline{k}})$\\

noting that, by orthonormality of the spherical harmonics;\\

$\sum_{l=0}^{\infty}\sum_{m=-l}^{l}|\overline{W}(l,m)|^{2}=4\pi$ $(+)$\\

Equating coefficients, the radial transform condition is satisfied setting;\\

$\overline{U}(l,m,k)=\alpha(k)({2\over \pi})^{1\over 2}{i^{l}k^{2}\over 4\pi}\overline{W}(l,m)^{*}$\\

$\overline{V}(l,m,k)=\beta(k)({2\over \pi})^{1\over 2}{i^{l}k^{2}\over 4\pi}\overline{W}(l,m)^{*}$ $(***)$\\

where $\{\alpha,\beta\}\subset S(\mathcal{R}_{>0})$, and $l\geq 0$, $-l\leq m\leq l$, $k>0$.\\

We now impose the boundary condition, that $\overline{J}|_{S(r_{0})}=\overline{0}$. We can achieve this by requiring that $\gamma_{l,m,k}|_{S(r_{0})}=0$, or equivalently, that $\tau_{k,l}(r_{0})=0$, or $j_{l}(kr_{0})=0$. The positive zeros $S_{l}$ of $j_{l}$ form a discrete set and we require that $k\in {S_{l}\over r_{0}}$. Using the asymptotic approximation;\\

$j_{l}(s)\sim_{s\rightarrow\infty}{sin(s-{l\pi\over 2})\over s}$\\

given in \cite{M}, we have;\\

$k\sim {\pi\over r_{0}}(n+{l\over 2})$, $n\in\mathcal{Z}_{>0}$\\

for large values of $k$. Using $(\dag\dag\dag\dag)$, we have that $\overline{J}$ takes the form;\\

$\overline{J}=\sum_{l\geq 0}\sum_{-l\leq m\leq l}\sum_{k\in {S_{l}\over r_{0}} }(\overline{U_{1}}(l,m,k)\gamma_{l,m,k}e^{-ickt}+\overline{V_{1}}(l,m,k)\gamma_{l,m,k}e^{ickt})$ $(\dag\dag\dag\dag\dag)$\\

where we have that;\\

$\overline{U}=\sqrt{\eta}\overline{U_{1}}$\\

$\overline{V}=\sqrt{\eta}\overline{V_{1}}$, $(\sharp)$\\

for some nonstandard infinite $\eta$ and the coefficients $\{\overline{U},\overline{V}\}$ are chosen to satisfy $(***)$ at the discrete eigenvalues.

\end{proof}

\begin{rmk}
\label{alternating}
Technically, the calculation $(++)$ requires smoothness of the coefficients $\{\overline{U},\overline{V}\}$ in the continuous variable $k$, so that we can invoke the Riemann-Lebsegue lemma, to eliminate the orthogonal terms $k\neq k'$. When passing to a discrete sum, we lose this property, and, an argument involving equating coefficients is required. We have sketched over this by involving a nonstandard element $\eta$, but, if the reader is unfamiliar with this circle of ideas, we are essentially using distributions. As this is primarily a Physics paper, we leave the technical details for another occasion.
\end{rmk}

\begin{lemma}
\label{sphereorthogonal}
Let $s_{l,k}(r)={\tau_{l,k}(r)\over c_{l,k}}$, where $c_{l,k}={k^{1\over 2}r_{0}\over \sqrt{2}}J_{l+{3\over 2}}(kr_{0})$, then, for $k\in {S_{l}\over r_{0}}$, $l\geq 0$, $l$ fixed, $s_{k,l}$ forms a complete orthonormal system in $C_{0,2}((0,r_{0}))$, consisting of continuous functions on the interval $(0,r_{0})$, vanishing at $r_{0}$, with respect to the measure $r^{2}dr$. Moreover, the functions $\delta_{l,m,k}(r,\theta,\phi)=Y_{l,m}(\theta,\phi)s_{l,k}(r)$, for $l\geq 0$,$-l\leq m\leq l$, $k\in {S_{l}\over r_{0}}$ form a complete orthonormal system in $C_{0,2}(B(r_{0}))$, consisting of continuous functions on the ball $B(r_{0})$ of radius $r_{0}$, vanishing at the boundary $S(r_{0})$, with respect to the standard measure $dB$.
\end{lemma}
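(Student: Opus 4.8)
The plan is to establish the one-dimensional radial statement first, and then to bootstrap the full three-dimensional orthonormality on $B(r_0)$ by combining the radial system with the spherical harmonics, which are already known to form a complete orthonormal basis of $L^2(S(1))$ from Lemma \ref{eigenvalues}. The overall strategy is therefore a standard separation-of-variables argument, with the Sturm--Liouville theory of the spherical Bessel equation $(\dag\dag\dag)$ supplying the radial ingredient.

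First I would set up the radial problem as a regular Sturm--Liouville eigenvalue problem. Rewriting the radial operator $R_r - \frac{l(l+1)}{r^2}$ in self-adjoint form, one checks that $r^2 \, dr$ is exactly the natural weight, so that the radial equation $(\dag\dag)$ reads $-\frac{d}{dr}\!\left(r^2 \frac{d\chi}{dr}\right) + l(l+1)\chi = k^2 r^2 \chi$ on $(0,r_0)$, with the boundary condition $\chi(r_0)=0$ and the regularity/boundedness condition at $r=0$ picking out $\tau_{l,k}(r)=k(\tfrac{2}{\pi})^{1/2} j_l(kr)$ as the admissible solution (the second solution $y_l$ being excluded by boundedness, as already noted in Lemma \ref{eigenvalues}). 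Imposing $j_l(kr_0)=0$ selects precisely the discrete set $k \in S_l/r_0$. Orthogonality of distinct eigenfunctions with respect to $r^2\,dr$ then follows from the self-adjointness of the operator together with the vanishing boundary terms, by the usual Green's-identity computation; this is where the boundary condition at $r_0$ and the regularity at $0$ are both used to kill the Wronskian boundary term.

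The normalisation constant $c_{l,k}=\frac{k^{1/2}r_0}{\sqrt 2}J_{l+3/2}(kr_0)$ is fixed by computing $\int_0^{r_0} \tau_{l,k}(r)^2 \, r^2\,dr$. This is the standard Bessel-function normalisation integral: using $\int_0^{r_0} j_l(kr)^2 r^2\,dr = \frac{r_0^3}{2}\,j_{l+1}(kr_0)^2$ at a zero $k$ of $j_l$, and converting $j_{l+1}(kr_0)$ back to $J_{l+3/2}(kr_0)$ via $j_{l+1}(s)=(\frac{\pi}{2s})^{1/2}J_{l+3/2}(s)$, one recovers $c_{l,k}$, so that $s_{l,k}=\tau_{l,k}/c_{l,k}$ has unit norm. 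Completeness of $\{s_{l,k} : k\in S_l/r_0\}$ in $C_{0,2}((0,r_0))$ (densely, in the $L^2(r^2\,dr)$ sense) is the classical completeness of Fourier--Bessel/Dini series, which I would cite from \cite{M} or \cite{L} rather than reprove.

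For the three-dimensional statement, I would write any $f\in C_{0,2}(B(r_0))$ in polar coordinates, expand it on each sphere of radius $r$ in the basis $\{Y_{l,m}\}$ to get coefficient functions $f_{l,m}(r)$, and observe that $f(\,\cdot\,)$ vanishing on $S(r_0)$ forces each $f_{l,m}(r_0)=0$, so each $f_{l,m}$ lies in $C_{0,2}((0,r_0))$ and may be expanded in the $s_{l,k}$. Orthonormality of $\delta_{l,m,k}$ with respect to $dB=r^2\sin\theta\,dr\,d\theta\,d\phi$ then factors as the product of the angular integral (giving $\delta_{ll'}\delta_{mm'}$ from the spherical harmonics) and the radial integral (giving $\delta_{kk'}$ from the $s_{l,k}$), with completeness following from the two completeness statements combined. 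The main obstacle I expect is the radial completeness together with the normalisation integral: getting the constant $c_{l,k}$ exactly right requires the Bessel identity for $\int_0^{r_0} j_l(kr)^2 r^2 \, dr$ evaluated at a zero, and the interchange of the angular and radial summations in the 3D completeness argument must be justified in the appropriate ($L^2$) topology rather than pointwise, which is the only genuinely delicate analytic point.
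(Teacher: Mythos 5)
Your proposal is correct and takes essentially the same approach as the paper: orthogonality via the Lagrange/Green identity for the self-adjoint radial operator (with the Wronskian boundary term killed by the condition $k\in S_{l}/r_{0}$), the normalisation constant $c_{l,k}$ via the standard Bessel norm integral at a zero (which the paper derives explicitly by l'Hospital's rule and the recurrence relation rather than citing it), radial completeness quoted from the literature (the paper cites Watson \cite{W}), and the three-dimensional statement by factoring the $B(r_{0})$ integral into angular and radial parts. No gaps to flag.
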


\begin{proof}
Let;\\

$M_{l,r}={d^{2}\over dr^{2}}+{2\over r}{d\over dr}-{l(l+1)\over r^{2}}$\\

$L_{l,r}=r^{2}M_{l,r}=r^{2}{d^{2}\over dr^{2}}+2r{d\over dr}-l(l+1)$\\

so that;\\

$L_{l,r}(f)=-(-r^{2}f')'-l(l+1)f$\\

for $f\in C^{2}(0,r_{0})$. By Lagrange's identity, see \cite{BP}, we have that;\\

$\int_{0}^{r_{0}}(L_{l,r}(u)v-uL_{l,r}(v))dr=-(-r^{2}(u'v-uv'))|_{0}^{r_{0}}$ $(*)$\\

and, with notation as above, we have that;\\

$\tau_{l,k}={k^{1\over 2}\over r^{1\over 2}}J_{l+{1\over 2}}(kr)$\\

As $M_{l,r}(\tau_{l,k})=-k^{2}\tau_{l,k}$, we have applying $(*)$, that;\\

$(k'^{2}-k^{2})\int_{0}^{r_{0}}\tau_{l,k}\overline{\tau_{l,k'}}r^{2}dr$\\

$=(r^{2}(\tau_{l,k}'\tau_{l,k'}-\tau_{l,k}\tau_{l,k'}'))|_{0}^{r_{0}}$\\

$=r_{0}^{2}(({-k^{1\over 2}\over 2r_{0}^{3\over 2}}J_{l+{1\over 2}}(kr_{0})+{k^{3\over 2}\over r_{0}^{1\over 2}}J'_{l+{1\over 2}}(kr_{0})){k'^{1\over 2}\over r_{0}^{1\over 2}}J_{l+{1\over 2}}(k'r_{0})$\\

$-({-k'^{1\over 2}\over 2r_{0}^{3\over 2}}J_{l+{1\over 2}}(k'r_{0})+{k'^{3\over 2}\over r_{0}^{1\over 2}}J'_{l+{1\over 2}}(k'r_{0})){k^{1\over 2}\over r_{0}^{1\over 2}}J_{l+{1\over 2}}(kr_{0}))$\\

$={r_{0}^{2}\over r_{0}}(k^{3\over 2}k'^{1\over 2}J'_{l+{1\over 2}}(kr_{0})J_{l+{1\over 2}}(k'r_{0})-k'^{3\over 2}k^{1\over 2}J'_{l+{1\over 2}}(k'r_{0})J_{l+{1\over 2}}(kr_{0}))$\\

Clearly, if $\{k,k'\}\subset {S_{l}\over r_{0}}$ are distinct, this proves that $\tau_{l,k}$ and $\tau_{l,k'}$ are orthogonal with respect to the measure $r^{2}dr$. We then have, using l'Hospital's rule, assuming that $k\in {S_{l}\over r_{0}}$ and the recurrence relation for Bessel functions, see \cite{M};\\

$||\tau_{l,k}||^{2}_{r^{2}dr}=lim_{k'\rightarrow k}{r_{0}(k^{3\over 2}k'^{1\over 2}J'_{l+{1\over 2}}(kr_{0})J_{l+{1\over 2}}(k'r_{0})-k'^{3\over 2}k^{1\over 2}J'_{l+{1\over 2}}(k'r_{0})J_{l+{1\over 2}}(kr_{0}))\over (k'+k)(k'-k)}$\\

$={r_{0}\over 2}lim_{k'\rightarrow k}{(kJ'_{l+{1\over 2}}(kr_{0})J_{l+{1\over 2}}(k'r_{0})-k'J'_{l+{1\over 2}}(k'r_{0})J_{l+{1\over 2}}(kr_{0}))\over (k'-k)}$\\

$={r_{0}\over 2}(kr_{0}J'_{l+{1\over 2}}(kr_{0})J'_{l+{1\over 2}}(kr_{0})-J'_{l+{1\over 2}}(kr_{0})J_{l+{1\over 2}}(kr_{0})-kr_{0}J''_{l+{1\over 2}}(kr_{0})J_{l+{1\over 2}}(kr_{0}))$\\

$={kr_{0}^{2}\over 2}[J'_{l+{1\over 2}}(kr_{0})]^{2}$\\

$={kr_{0}^{2}\over 2}[J_{l+{3\over 2}}(kr_{0})]^{2}$\\

It follows immediately, that, for fixed $l\geq 0$, the $s_{l,k}$ form an orthonormal system. The proof that the $s_{l,k}$ form a complete system is sketched in \cite{W}. As $\{Y_{l,m}:l\geq 0,-l\leq m\leq l\}$ forms an orthogonal system on $S(1)$, we have that;\\

$\int_{S(r_{0})}\delta_{l,m,k}\overline{\delta_{l',m',k'}}dB$\\

$=\int_{S(r_{0})}Y_{l,m}s_{l,k}\overline{Y_{l',m'}s_{l',k'}}dB$\\

$=\int_{0}^{r_{0}}\int_{S(1)}Y_{l,m}\overline{Y_{l',m'}}(\theta,\phi)s_{l,k}\overline{s_{l',k'}}(r)r^{2}dS(1)dr$\\

$=\delta_{l,m}\delta_{l,k}$\\

proving that the $\delta_{l,m,k}$ form an orthonormal system. Completeness then follows easily from completeness of the $Y_{l,m}$ and the $s_{l,k}$.

\end{proof}

\begin{lemma}
\label{balmer}
For the fundamental electric field solutions $\overline{E}_{l_{0},k_{0}}^{\alpha,\beta}$, as defined below, the corresponding time averaged energies $<U_{em,l_{0},k_{0}}^{Q}>$, determined by the conserved quantity $Q\neq 0$, defined below, are quantised and display the properties of the Balmer series. Moreover, for a general bounded electric field solution $\overline{E}$, determined by $(\rho,\overline{J})$, satisfying the hypotheses of Lemma \ref{eigenvalues}, the corresponding energy $U_{em}$ can be computed in terms of the fundamental energies.
\end{lemma}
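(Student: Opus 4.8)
The plan is to exploit the vanishing of the magnetic field. By Lemma \ref{waveequation} we have $\overline{B}=\overline{0}$ in the rest frame $S$, so the electromagnetic energy confined to the ball collapses to the purely electric expression $U_{em}=\frac{\epsilon_{0}}{2}\int_{B(r_{0})}|\overline{E}|^{2}\,dB$. First I would use Maxwell's equation $(iv)$, which with $\overline{B}=\overline{0}$ reduces to $\overline{J}=-\epsilon_{0}\frac{\partial\overline{E}}{\partial t}$, to recover the time-varying part of $\overline{E}$ from $\overline{J}$ mode by mode: a component of $\overline{J}$ proportional to $e^{\mp ickt}$ corresponds to a component of $\overline{E}$ divided by $\pm i\epsilon_{0}ck$. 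Substituting the discrete expansion $(\dag\dag\dag\dag\dag)$ of $\overline{J}$ from Lemma \ref{eigenvalues}, and recalling $\gamma_{l,m,k}=c_{l,k}\delta_{l,m,k}$, this writes the oscillating part of $\overline{E}|_{B(r_{0})}$ in the orthonormal system $\{\delta_{l,m,k}\}$ of Lemma \ref{sphereorthogonal}, each factor vanishing on $S(r_{0})$ in agreement with $\overline{J}|_{S(r_{0})}=\overline{0}$.

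Next I would expand $|\overline{E}|^{2}$, integrate over $B(r_{0})$, and invoke the orthonormality in Lemma \ref{sphereorthogonal} together with a time-average $<\cdot>$; the latter annihilates the cross terms between the $e^{+ickt}$ and $e^{-ickt}$ families, and by orthogonality the spatial integral separates the distinct eigenvalues $k\in S_{l}/r_{0}$. This leaves $<U_{em}>$ as a sum of the form $\sum_{l,m}\sum_{k\in S_{l}/r_{0}}\frac{1}{2\epsilon_{0}c^{2}k^{2}}|\,\cdot\,|^{2}$, in which the only surviving dependence on the radial eigenvalue is the factor $k^{-2}$. The fundamental solution $\overline{E}_{l_{0},k_{0}}^{\alpha,\beta}$ is the contribution of a single $(l_{0},k_{0})$. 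I would then define the charge $Q=\int_{B(r_{0})}\rho\,dB$, which is time-independent precisely because $\overline{J}|_{S(r_{0})}=\overline{0}$ forces the boundary flux to vanish, express it through the coefficients $(\overline{U},\overline{V})$ via the radial transform relations $(***)$ and the charge--current link of Lemma \ref{waveequationlinks}, and use it to trade the one remaining continuous amplitude for $Q$. This should render $<U_{em,l_{0},k_{0}}^{Q}>$ a function of $Q$ and $k_{0}$ alone. Inserting the asymptotic $k_{0}\sim\frac{\pi}{r_{0}}(n+\frac{l_{0}}{2})$ established in Lemma \ref{eigenvalues} then converts $k_{0}^{-2}$ into the $\frac{1}{n^{2}}$ law, which is the Balmer dependence.

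For the final assertion I would take a general bounded $\overline{E}$ arising from a pair $(\rho,\overline{J})$ as in Lemma \ref{eigenvalues}, expand it in the complete orthonormal basis $\{\delta_{l,m,k}\}$ of Lemma \ref{sphereorthogonal}, and observe that orthonormality makes $\int_{B(r_{0})}|\overline{E}|^{2}\,dB$ split as a sum over modes with no interference; after the time-average the total energy is then the sum of the fundamental energies $<U_{em,l,k}^{Q}>$ over the active $(l,m,k)$, which is the claimed expression.

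The main obstacle is the degree-of-freedom count in the step that introduces $Q$. One must show that $Q\neq 0$ removes \emph{exactly one} continuous parameter and that quantisation genuinely fails when $Q=0$ (the ionisation condition); I expect this to require separating the static, net-charge component of $\overline{E}$, whose Coulomb-like field carries $Q$ and need not vanish on $S(r_{0})$, from the oscillating component that stores the quantised energy, and then checking that the time-average kills the static--oscillating cross term. Compounding this is the distributional subtlety flagged in Remark \ref{alternating}: once $k$ is restricted to the discrete zero-set $S_{l}/r_{0}$ the Riemann--Lebesgue cancellation used for continuous $k$ is unavailable, so the passage through the rescaling $(\sharp)$ with the infinite $\eta$ must be treated by regarding the coefficients as distributions (or nonstandard quantities). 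I would also need the Schwartz decay of $\alpha,\beta$ to guarantee convergence of the mode sums and to justify interchanging summation, integration over $B(r_{0})$, and the time-average.
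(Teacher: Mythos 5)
Your architecture tracks the paper's proof closely: integrating Maxwell's equation $(iv)$ with $\overline{B}=\overline{0}$ to recover $\overline{E}$ mode by mode from the discrete expansion $(\dag\dag\dag\dag\dag)$, invoking the orthonormality of the $\delta_{l,m,k}$ from Lemma \ref{sphereorthogonal}, defining the conserved charge $Q$ via the vanishing boundary flux, and decomposing a general bounded solution into fundamental ones — that last part matches the paper's $(\sharp\sharp\sharp\sharp)$ essentially verbatim. But the central quantisation step is wrong as you state it. You claim that after orthonormality ``the only surviving dependence on the radial eigenvalue is the factor $k^{-2}$,'' so that substituting $k_{0}\sim\frac{\pi}{r_{0}}(n+\frac{l_{0}}{2})$ immediately yields the $\frac{1}{n^{2}}$ law. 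This ignores the $k_{0}$-dependence hidden in the amplitudes once $Q$ is imposed: the coefficients $(\overline{U},\overline{V})$ carry a factor $k^{2}$ through $(***)$ of Lemma \ref{eigenvalues}, the normalisation constants $c_{l_{0},k_{0}}=\frac{k_{0}^{1/2}r_{0}}{\sqrt{2}}J_{l_{0}+\frac{3}{2}}(k_{0}r_{0})$ contribute further powers of $k_{0}$ and a Bessel factor, and the amplitude fixed by the charge satisfies $\alpha(k_{0})\propto Q/(k_{0}^{3/2}J_{\frac{3}{2}}(k_{0}r_{0}))$. When everything is combined, \emph{all} explicit powers of $k_{0}$ cancel, and the paper obtains $<U_{em,l_{0},k_{0}}^{Q}>=\frac{Q^{2}\beta_{l_{0}}}{1024\pi^{8}\epsilon_{0}r_{0}}\frac{J^{2}_{l_{0}+\frac{3}{2}}(k_{0}r_{0})}{J^{2}_{\frac{3}{2}}(k_{0}r_{0})}$, which tends to a nonzero constant as $k_{0}\rightarrow\infty$ rather than decaying like $k_{0}^{-2}$. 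The Balmer behaviour resides in the \emph{second-order} term of the asymptotic expansion of this Bessel ratio, extracted via the polynomial representations $P_{l},Q_{l}$ of Lemma \ref{eigenvalues} and the parity analysis of $\sin(k_{0}r_{0})$ and $\cos(k_{0}r_{0})$ at the approximate zeros, and it appears in energy \emph{differences} $<U_{em,l_{0},k_{0}}^{Q}>-<U_{em,l_{0},k_{1}}^{Q}>$. Your route, taken literally, predicts the wrong spectrum (levels decaying to zero like $n^{-2}$ instead of accumulating at a finite value with $n^{-2}$ corrections).

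The degree-of-freedom count you flag as the main obstacle is resolved in the paper by a device you did not find: conservation of $Q$ supplies \emph{two} integral identities, $Q=\int_{B(r_{0})}\rho\,d\overline{x}$ and $0=\int_{B(r_{0})}\frac{\partial\rho}{\partial t}\,d\overline{x}$, evaluated using $\int_{B(r_{0})}e^{i\overline{k}\centerdot\overline{x}}d\overline{x}=(\frac{2\pi r_{0}}{k_{0}})^{\frac{3}{2}}J_{\frac{3}{2}}(k_{0}r_{0})$ (the reference \cite{m}); the second identity forces $\alpha(k_{0})=-\beta(k_{0})$, and the first then fixes $\alpha(k_{0})$ in terms of $Q$. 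Crucially this division requires $J_{\frac{3}{2}}(k_{0}r_{0})\neq 0$, which the paper secures with Watson's theorem that $J_{l_{0}+\frac{1}{2}}$ and $J_{\frac{3}{2}}$ have no common zeros, valid exactly when $l_{0}\neq 1$ — and $l_{0}=1$ is precisely the ionisation exception of Remark \ref{ionisation} where $Q=0$ imposes no constraint and quantisation fails. Your proposed separation of a static Coulomb component is not how the paper proceeds: the constant of time-integration in $\overline{E}$ is discarded by the boundedness requirement, and $Q$ is carried by the oscillating modes themselves. Without the $\alpha=-\beta$ relation and the no-common-zeros fact, your trade of the continuous amplitude for $Q$ cannot be completed.
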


\begin{proof}

 We compute the electric field $\overline{E}$, assuming the magnetic field $\overline{B}$ vanishes. By Maxwell's equations;\\

${\partial \overline{E}\over \partial t}=-{1\over\epsilon_{0}}\overline{J}$\\

so that, integrating $(\dag\dag\dag\dag\dag)$ of Lemma \ref{eigenvalues}, requiring the boundedness condition, using the result of Lemma \ref{sphereorthogonal}, and the relations, $(\sharp)$ of Lemma \ref{eigenvalues}, we have;\\

$\overline{E}={-1\over\epsilon_{0}}\sum_{l\geq 0}\sum_{-l\leq m\leq l}\sum_{k\in {S_{l}\over r_{0}} }({\overline{U_{1}}(l,m,k)\over -ick}\gamma_{l,m,k}e^{-ickt}+{\overline{V_{1}}(l,m,k)\over ick}\gamma_{l,m,k}e^{ickt})$\\

$={-1\over\epsilon_{0}\sqrt{\eta}}\sum_{l\geq 0}\sum_{-l\leq m\leq l}\sum_{k\in {S_{l}\over r_{0}} }({\overline{U}(l,m,k)c_{l,k}\over -ick}\delta_{l,m,k}e^{-ickt}+{\overline{V}(l,m,k)c_{l,k}\over ick}\delta_{l,m,k}e^{ickt})$\\

$(\dag\dag\dag\dag\dag\dag)$\\

From here, we rely on the fact, proved in \cite{W}, that for $\{l_{1},l_{2}\}\subset\mathcal{Z}_{\geq 0}$ distinct, the Bessel functions $J_{l_{1}+{1\over 2}}$ and $J_{l_{2}+{1\over 2}}$ have no common zeros. We define the fundamental solutions $\overline{E}_{l_{0},k_{0}}^{\alpha,\beta}$, $l_{0}\geq 0$, $k_{0}\in S_{l_{0}}$ by requiring that $\alpha$ and $\beta$ are both supported at a single point $k_{0}\in S_{l_{0}}$ of the discrete union $\bigcup_{l\geq 0}S_{l}$, so that;\\

$\overline{E}_{l_{0},k_{0}}^{\alpha,\beta}={-1\over\epsilon_{0}\sqrt{\eta}}\sum_{-l_{0}\leq m\leq l_{0}}({\overline{U}(l_{0},m,k_{0})c_{l_{0},k_{0}}\over -ick_{0}}\delta_{l_{0},m,k_{0}}e^{-ick_{0}t}+{\overline{V}(l_{0},m,k_{0})c_{l_{0},k_{0}}\over ick_{0}}\delta_{l_{0},m,k_{0}}e^{ick_{0}t})$, $(\sharp\sharp)$\\

and both $\overline{U}(l_{0},m,k_{0})$ and $\overline{V}(l_{0},m,k_{0})$ are defined by $(***)$, in Lemma \ref{eigenvalues}. By Poynting's Theorem, see \cite{G}, using the facts $(*)$, $(***)$ of Lemma \ref{eigenvalues}, and the coefficient relations in Lemmas \ref{waveequationlinks} and \ref{sphereorthogonal}, the total energy stored in the electric field $\overline{E}_{l_{0},k_{0}}^{\alpha,\beta}$, restricted to $B(r_{0})$, is given by;\\

$U_{em,l_{0},k_{0}}^{\alpha,\beta}={\epsilon_{0}\over 2}\int_{B(r_{0})}|\overline{E}_{l_{0},k_{0}}^{\alpha,\beta}|^{2}d\overline{x}$\\

$={\epsilon_{0}\over 2}\int_{B(r_{0})}(\overline{E}_{l_{0},k_{0}}^{\alpha,\beta},\overline{E}_{l_{0},k_{0}}^{\alpha,\beta})d\overline{x}$\\

$\noindent ={\epsilon_{0}\over 2}{1\over\eta\epsilon_{0}^{2}}\sum_{-l_{0}\leq m\leq l_{0}}{c_{l_{0},k_{0}}^{2}\over c^{2}k_{0}^{2}}(|\overline{U}(l_{0},m,k_{0})|^{2}+|\overline{V}(l_{0},m,k_{0})|^{2}-2Re((\overline{U}(l_{0},m,k_{0}),\overline{V}(l_{0},m,k_{0}))e^{-2ick_{0}t}))$\\

$={1\over 2\eta\epsilon_{0}}\sum_{-l_{0}\leq m\leq l_{0}}{c_{l_{0},k_{0}}^{2}\over c^{2}k_{0}^{2}}(|\alpha(k_{0})|^{2}+|\beta(k_{0})|^{2}-2Re(\alpha(k_{0})\beta(k_{0})^{*}e^{-2ick_{0}t})){k_{0}^{4}\over 8\pi^{3}}|\overline{W}(l_{0},m)|^{2}$\\

$={r_{0}^{2}k_{0}^{3}J^{2}_{l_{0}+{3\over 2}}(k_{0}r_{0})\over 32\epsilon_{0}c^{2}\pi^{3}\eta}(|\alpha(k_{0})|^{2}+|\beta(k_{0})|^{2}-2Re(\alpha(k_{0})\beta(k_{0})^{*}e^{-2ick_{0}t}))\sum_{-l_{0}\leq m\leq l_{0}}|\overline{W}(l_{0},m)|^{2}$\\

$={r_{0}^{2}k_{0}^{3}J^{2}_{l_{0}+{3\over 2}}(k_{0}r_{0})\beta_{l_{0}}\over 32\epsilon_{0}c^{2}\pi^{3}\eta}(|{cf(k_{0})\over k_{0}}|^{2}+|{-cg(k_{0})\over k_{0}}|^{2}-2Re({cf(k_{0})\over k_{0}}{-cg(k_{0})\over k_{0}}^{*}e^{-2ick_{0}t}))$\\

$={r_{0}^{2}k_{0}J^{2}_{l_{0}+{3\over 2}}(k_{0}r_{0})\beta_{l_{0}}\over 32\epsilon_{0}\pi^{3}\eta}(|f(k_{0})|^{2}+|g(k_{0})|^{2}+2Re(f(k_{0})g(k_{0})^{*}e^{-2ick_{0}t}))$ $(\sharp\sharp\sharp\sharp\sharp)$\\

where $\beta_{l_{0}}=\sum_{-l_{0}\leq m\leq l_{0}}|\overline{W}(l_{0},m)|^{2}$\\

Now let;\\

$Q_{t}^{\alpha,\beta}=\int_{B(r_{0}}\rho_{t}^{\alpha,\beta}d\overline{x}$\\

Note that $Q_{t}^{\alpha,\beta}$ is conserved, as, by the continuity equation, the divergence theorem, and the vanishing of $\overline{J}_{t}^{\alpha,\beta}$ on $S(r_{0})$;\\

${dQ_{t}^{\alpha,\beta}\over dt}=\int_{B(r_{0})}{\partial \rho_{t}^{\alpha,\beta}\over \partial t}d\overline{x}$\\

$=\int_{B(r_{0})}-div(\overline{J}_{t}^{\alpha,\beta})d\overline{x}$\\

$=\int_{S(r_{0})}-\overline{J}_{t}^{\alpha,\beta}\centerdot d\overline{S}_{r_{0}}=0$\\

Using the relations $f(\overline{k})={\alpha(k)k\over c}$ and $g(\overline{k})=-{\beta(k)k\over c}$ from Lemma \ref{waveequationlinks} and the radial condition, we have, using the integral representation in Lemma \ref{waveequationlinks}, that for a fundamental solution;\\

$\rho_{l_{0},k_{0}}^{\alpha,\beta}={1\over \sqrt{\eta}}\int_{\overline{k}\in S(k_{0})}{\alpha(k_{0})k_{0}\over c}e^{i(\overline{k}\centerdot\overline{x}-k_{0}t)}-{\beta(k_{0})k_{0}\over c}e^{i(\overline{k}\centerdot\overline{x}+k_{0}t)}dS_{k_{0}}$\\

${\partial \rho_{l_{0},k_{0}}^{\alpha,\beta}\over \partial t}={1\over \sqrt{\eta}}\int_{\overline{k}\in S(k_{0})}{-i\alpha(k_{0})k_{0}^{2}\over c}e^{i(\overline{k}\centerdot\overline{x}-k_{0}t)}-{i\beta(k_{0})k_{0}^{2}\over c}e^{i(\overline{k}\centerdot\overline{x}+k_{0}t)}dS_{k_{0}}$\\

We can then use this representation, the result in \cite{m}, together with the conservation property, to obtain;\\

$Q=\int_{B(r_{0})}\rho_{l_{0},k_{0}}^{\alpha,\beta}d\overline{x}$\\

$={1\over \sqrt{\eta}}\int_{\overline{k}\in S(k_{0})}{\alpha(k_{0})k_{0}\over c}({2\pi r_{0}\over k_{0}})^{3\over 2}J_{3\over 2}(k_{0}r_{0})-{\beta(k_{0})k_{0}\over c}({2\pi r_{0}\over k_{0}})^{3\over 2}J_{3\over 2}(k_{0}r_{0})dS_{k_{0}}$\\

$={(\alpha(k_{0})-\beta(k_{0}))\over \sqrt{\eta}}{(2\pi r_{0})^{3\over 2}\over ck_{0}^{3\over 2}}k_{0}4\pi k_{0}^{2}J_{3\over 2}(k_{0}r_{0})$\\

$={(\alpha(k_{0})-\beta(k_{0}))\over \sqrt{\eta}}{(2\pi r_{0})^{3\over 2}\over c}4\pi k_{0}^{3\over 2}J_{3\over 2}(k_{0}r_{0})$\\

$0=\int_{B(r_{0})}{\partial \rho_{l_{0},k_{0}}^{\alpha,\beta}\over \partial t}d\overline{x}$\\

$={-i(\alpha(k_{0})+\beta(k_{0}))\over \sqrt{\eta}}{(2\pi r_{0})^{3\over 2}\over c}4\pi k_{0}^{5\over 2}J_{3\over 2}(k_{0}r_{0})$\\

so that, rearranging $\alpha(k_{0})=-\beta(k_{0})$ $f(k_{0})=g(k_{0})$, and, for $l_{0}\neq 1$;\\

$\alpha(k_{0})={Q\sqrt{\eta}c\over 8\pi(2\pi r_{0})^{3\over 2}k_{0}^{3\over 2}J_{3\over 2}(k_{0}r_{0})}$\\

$f(k_{0})={Q\sqrt{\eta}\over 8\pi(2\pi r_{0})^{3\over 2}k_{0}^{1\over 2}J_{3\over 2}(k_{0}r_{0})}$\\

Now we can substitute in $(\sharp\sharp\sharp\sharp\sharp)$, to obtain;\\

$U_{em,l_{0},k_{0}}^{Q}={r_{0}^{2}k_{0}J^{2}_{l_{0}+{3\over 2}}(k_{0}r_{0})\beta_{l_{0}}\over 32\epsilon_{0}\pi^{3}\eta}({2Q^{2}\eta(1+cos(2ck_{0}t))\over 64\pi^{2}(2\pi r_{0})^{3}k_{0}J^{2}_{3\over 2}(k_{0}r_{0})})$\\

$={Q^{2}\beta_{l_{0}}(1+cos(2ck_{0}t))\over 1024\pi^{8}\epsilon_{0}r_{0}}{J^{2}_{l_{0}+{3\over 2}}(k_{0}r_{0})\over J^{2}_{3\over 2}(k_{0}r_{0})}$\\

and, taking the average over a cycle;\\

$<U_{em,l_{0},k_{0}}^{Q}>={Q^{2}\beta_{l_{0}}\over 1024\pi^{8}\epsilon_{0}r_{0}}{J^{2}_{l_{0}+{3\over 2}}(k_{0}r_{0})\over J^{2}_{3\over 2}(k_{0}r_{0})}$\\

By the explicit representation of Bessel functions in Lemma \ref{eigenvalues}, we have that;\\

${J^{2}_{l_{0}+{3\over 2}}(k_{0}r_{0})\over J^{2}_{3\over 2}(k_{0}r_{0})}={(P_{l_{0}+1}({1\over k_{0}r_{0}})sin(k_{0}r_{0})-Q_{l_{0}}({1\over k_{0}r_{0}})cos(k_{0}r_{0}))^{2}\over (P_{1}({1\over k_{0}r_{0}})sin(k_{0}r_{0})-Q_{0}cos(k_{0}r_{0}))^{2}}$\\

and, using the asymptotic description of ${S_{l_{0}}\over r_{0}}$ for large values of $k_{0}$, in Lemma \ref{eigenvalues}, we have that;\\

$cos(k_{0}r_{0})\simeq (-1)^{n_{0}}(-1)^{l_{0}\over 2}$ $(l_{0}\ even)$\\

$cos(k_{0}r_{0})\simeq 0$ $(l_{0}\ odd)$\\

$sin(k_{0}r_{0})\simeq 0$ $(l_{0}\ even)$\\

$sin(k_{0}r_{0})\simeq (-1)^{n_{0}}(-1)^{l_{0}-1\over 2}$ $(l_{0}\ odd)$\\

so that, for $l_{0}$ even;\\

${J^{2}_{l_{0}+{3\over 2}}(k_{0}r_{0})\over J^{2}_{3\over 2}(k_{0}r_{0})}\simeq {Q_{l_{0}}^{2}({1\over k_{0}r_{0}})\over Q_{0}^{2}}={Q_{l_{0},0}^{2}\over Q_{0}^{2}}+{2Q_{l_{0},0}Q_{l_{0},2}\over Q_{0}^{2}k_{0}^{2}r_{0}^{2}}+O({1\over k_{0}^{4}r_{0}^{4}})$\\

and, for  $l_{0}$ odd;\\

${J^{2}_{l_{0}+{3\over 2}}(k_{0}r_{0})\over J^{2}_{3\over 2}(k_{0}r_{0})}\simeq {P_{l_{0}+1}^{2}({1\over k_{0}r_{0}})\over P_{1}^{2}({1\over k_{0}r_{0}})}={P_{l_{0}+1,1}^{2}\over P_{1,1}^{2}}+{2P_{l_{0}+1,1}P_{l_{0}+1,3}\over P_{1,1}^{2}k_{0}^{2}r_{0}^{2}}+O({1\over k_{0}^{4}r_{0}^{4}})$\\

It follows that, for $l_{0}$ even, and large $\{k_{0},k_{1}\}$;\\

$<U_{em,l_{0},k_{0}}^{Q}>-<U_{em,l_{0},k_{1}}^{Q}>\simeq {2Q^{2}Q_{l_{0},0}Q_{l_{0},2}\beta_{l_{0}}\over 1024\pi^{8}\epsilon_{0}Q_{0}^{2}r_{0}^{3}(k_{0}^{2}-k_{1}^{2})}$\\

$\simeq {Q^{2}Q_{l_{0},0}Q_{l_{0},2}\beta_{l_{0}}\over 128\pi^{10}\epsilon_{0}Q_{0}^{2}r_{0}(m_{0}^{2}-m_{1}^{2})}$\\

and for $l_{0}$ odd, and large $\{k_{0},k_{1}\}$;\\

$<U_{em,l_{0},k_{0}}^{Q}>-<U_{em,l_{0},k_{1}}^{Q}>\simeq {2Q^{2}P_{l_{0}+1,1}P_{l_{0}+1,3}\beta_{l_{0}}\over 1024\pi^{8}\epsilon_{0}P_{1,1}^{2}r_{0}^{3}(k_{0}^{2}-k_{1}^{2})}$\\

$\simeq {Q^{2}P_{l_{0}+1,1}P_{l_{0}+1,3}\beta_{l_{0}}\over 128\pi^{10}\epsilon_{0}P_{1,1}^{2}r_{0}(m_{0}^{2}-m_{1}^{2})}$\\

where $k_{0}\simeq {\pi\over r_{0}}(n_{0}+{l_{0}\over 2})$ and $m_{0}=2n_{0}+l_{0}$, $k_{1}\simeq {\pi\over r_{0}}(n_{1}+{l_{0}\over 2})$ and $m_{1}=2n_{1}+l_{0}$, with $\{m_{1},m_{2}\}\subset{\mathcal{Z}}_{\geq 1}$\\

which agrees closely with the Balmer series as claimed. Observe that for distinct $(l_{0},k_{0})$ and $(l_{1},k_{1})$, using the representation $(\sharp\sharp)$ and the orthogonality of the series $\delta_{l,m,k}$, that for $\{\alpha_{0},\beta_{0}\}$ and $\{\alpha_{1},\beta_{1}\}$ supported on $k_{0}\in S_{l_{0}}$ and $k_{1}\in S_{l_{1}}$ respectively, that;\\

$\int_{B(r_{0})}\overline{E}_{l_{0},k_{0}}^{\alpha_{0},\beta_{0}}\overline{\overline{E}_{l_{1},k_{1}}^{\alpha_{1},\beta_{1}}}d\overline{x}=0$ $(\sharp\sharp\sharp\sharp)$\\

For any $\overline{E}$ represented as in $(\dag\dag\dag\dag\dag\dag)$ we have that;\\

$\overline{E}=\sum_{l\geq 0}\sum_{k\in S_{l}}\overline{E}_{k,l}^{\alpha_{k},\beta_{k}}$\\

where $\alpha_{k}$ and $\beta_{k}$ are the restrictions of $\alpha$ and $\beta$ to $k\in S_{l}$. It follows from $(\sharp\sharp\sharp),(\sharp\sharp\sharp\sharp)$, that;\\

$U_{em}=\int_{B(r_{0})}|\overline{E}|^{2}d\overline{x}$\\

$=\sum_{l\geq 0}\sum_{k\in S_{l}}\int_{B(r_{0})}|\overline{E}_{k,l}^{\alpha_{k},\beta_{k}}|^{2}d\overline{x}$\\

$=\sum_{l\geq 0}\sum_{k\in S_{l}}U_{em,k,l}^{Q_{k,l}^{\alpha_{k},\beta_{k}}}$\\

where $Q_{k,l}^{\alpha_{k},\beta_{k}}=\int_{B(r_{0})}\rho_{k,l}^{\alpha_{k},\beta_{k}}d\overline{x}$\\

\end{proof}

\begin{rmk}
\label{ionisation}
Note that the condition $Q=0$ places no restriction on the values of $\alpha$ and $\beta$, when $l_{0}=1$. As the values of $\alpha$ and $\beta$ can vary continuously, this suggests that the quantisation phenomenon, observed in the previous lemma, occurs only when the atom is ionised, in which case $Q\neq 0$ and we observe the behaviour of the Balmer series. This point of view is supported by the results of the Franck-Hertz experiment.

\end{rmk}
\end{section}

\end{document}